
\documentclass{amsart}
\usepackage{amsmath,amssymb,epsf,ifthen,graphics,graphicx,url}

\usepackage{tikz}
\usetikzlibrary{arrows,backgrounds,decorations}


\newtheorem{thm}{Theorem}
\newtheorem{lem}[thm]{Lemma}
\newtheorem{alg}[thm]{Algorithm}
\newtheorem{cor}[thm]{Corollary}
\newtheorem{prop}[thm]{Proposition}
\theoremstyle{definition}
\newtheorem{defn}[thm]{Definition}

\newcommand{\blackboard}[1]{\ensuremath{\mathbb{#1}}}
\newcommand{\Z}{\blackboard{Z}}

\newcommand{\ba}{\begin{array}}
\newcommand{\ea}{\end{array}}
\newcommand{\be}{\begin{enumerate}}
\newcommand{\ee}{\end{enumerate}}
\newcommand{\bi}{\begin{itemize}}
\newcommand{\ei}{\end{itemize}}

\newcommand{\bc}{\begin{center}}
\newcommand{\ec}{\end{center}}
\newcommand{\bt}{\begin{tabular}}
\newcommand{\et}{\end{tabular}}

\newcommand{\inv}{^{-1}}

\newcommand{\LiptonZ}{MR0445901}

\newcommand{\DKL}{DiekertKL12}
\newcommand{\LohreyOndrusch}{MR2340901}

\newcommand{\LS}{MR0577064}

\newcommand{\Simon}{MR563704}
\newcommand{\Parry}{MR1062874}
\newcommand{\Wehrfritz}{Wehrfritz}

\newcommand{\Robinson}{MR1357169}
\newcommand{\Waack}{Waack}

\newcommand{\Mihailova}{MR0222179}
\newcommand{\Segal}{Segal}

\newcommand{\Mias}{MR2437984}

\newcommand{\Gromov}{MR623534}

\newcommand{\MKS}{MR2109550}
\newcommand{\SapirEtAl}{MR1933724}

\begin{document}

\title[Groups that have normal forms computable in logspace]{On groups that have normal forms \\computable in logspace}

\author[M. Elder] {Murray Elder}\address{School of Mathematical and Physical Sciences, The
University of Newcastle,
Callaghan NSW 2308 Australia}
\email{Murray.Elder@newcastle.edu.au}

\author[G. Elston] {Gillian Elston} \address{Department of
  Mathematics, Hofstra University, Hempstead NY 11549 USA}
\email{Gillian.Z.Elston@hofstra.edu}

\author[G. Ostheimer] {Gretchen Ostheimer} \address{Department of
  Computer Science, Hofstra University, Hempstead NY 11549 USA}
\email{Gretchen.Ostheimer@hofstra.edu}

\keywords{Logspace algorithm; logspace normal form; logspace embeddable; wreath product; Baumslag-Solitar group; logspace word problem}
\subjclass[2010]{20F65; 68Q15}

 \date{\today}

\thanks{The first author acknowledges support from the Australian Research Council grants DP110101104, DP120100996 and FT110100178, and is grateful for the hospitality of Hofstra University.}

\begin{abstract}
We consider the class of finitely generated groups which have a normal form computable in {\em logspace}.
We prove that the class of such groups
 is closed
under passing to finite index
subgroups, direct products, wreath products, and certain free products and infinite extensions, and includes the
solvable Baumslag-Solitar groups,
as well as  non-residually finite (and hence non-linear) examples.
We define a group to be {\em logspace
  embeddable} if it embeds in a group with normal forms computable in
logspace. We prove that  finitely generated nilpotent groups are logspace embeddable.
It follows that all groups of polynomial growth are logspace embeddable.
\end{abstract} \maketitle

\section{Introduction}\label{sec:intro}
Much of combinatorial, geometric and computational group theory
focuses on computing efficiently in finitely generated groups.
Recent work in group-based cryptography  demands fast and
memory-efficient ways to compute {\em normal forms} for group
elements \cite{\Mias}. In this article we consider groups which have a normal form
over some finite generating set, for which there is an
algorithm to compute the normal form of a given input word in  {\em logspace}.
We show that the class of finitely generated groups having a logspace normal
form is surprisingly large.


\begin{defn}\label{def:transducer}
A {\em deterministic logspace transducer} consists of a finite state
control and three tapes: the first {\em input tape} is read only, and
stores the input word; the second {\em work tape} is read-write, but is
restricted to using at most $c\log n$ squares, where $n$ is the length
of the word on the input tape and $c$ is a fixed constant; and the
third {\em output tape} is write-only, and is restricted to writing left
to right only. A transition of the machine takes as input a letter of the
input tape, a state of the finite state control, and a letter on the
work-tape. On each transition the machine can modify the work tape,
change states, move the input read-head, and write at most a fixed constant number of letters to the
output tape, moving right along the tape for each letter printed.
\end{defn}

Since the position of the read-head of the input tape is an integer between 1 and $n$, we can store it in  binary on the work tape.

\begin{defn}\label{def:logspacecomptable}
Let $X,Y$ be finite alphabets. Let $X^*$ denote the set of all finite length strings in the letters of $X$, including the empty string $\lambda$.
We call $f:X^*\rightarrow Y^*$ a
 {\em logspace computable function}  if there is a deterministic  logspace transducer that on input $w\in X^*$ computes $f(w)$.\end{defn}

\begin{defn}\label{def:nf}
A {\em normal form} $L$ for a group $G$ with finite symmetric generating set $X$ is
any subset of $X^*$ that is in bijection with $G$ under the map
which sends a word $w$ to the group element $\overline{w}$ which it
represents.
\end{defn}

\begin{defn}\label{def:logspacenf}
A logspace computable function
$f:X^*\rightarrow X^*$ for which $f(w)$ is the normal form word for $w$, is called a {\em logspace normal form function} for $(G,X)$.
\end{defn}
\begin{defn}
We say
$(G,X)$ has a {\em logspace  normal form}   if it has a logspace normal form function.\end{defn}

We may sometimes say a normal form is logspace computable without reference to a specific function.

As a simple first example, consider the infinite cyclic group $\langle a \ | \ -\rangle$ which has normal form $\{a^i \ | \ i\in \mathbb Z\}$. Let $f$ be the function that converts a word $w$ in the letters $a^{\pm 1}$ into normal form. Then $f$ can be computed by scanning $w$ from left to write updating a binary counter $i$, stored on the work tape, and when the end of the input is reached, output $a$ $i$ times if $i\geq 0$ or $a^{-1}$ $i$ times if $i<0$. So  the infinite cyclic group has a logspace normal form (with respect to the generating set $\{a,a^{-1}\}$).

 The
{\em word problem} asks for an algorithm for a finitely generated
group which takes as input a word over the generating set, and decides whether or
not the word is equal to the identity in the group.
 In \cite{\LiptonZ}  Lipton and Zalcstein proved that all linear
groups (groups of matrices with entries from a field of characteristic
zero) have word problem solvable in logspace.
Since the class
of linear group includes all free groups and all polycyclic groups,
it follows from their results that the word problem for any such
group can be decided in logspace.  Simon extended this to  linear groups over arbitrary fields \cite{\Simon}.

One might expect the word problem to be computationally easier than computing a normal form. Certainly if one insists on a {\em geodesic} normal form (with respect to some generating set) then this is the case (see
the end of this section).

The purpose of this article is to examine how broad the class of groups with normal forms computable in logspace is.
In Section \ref{sec:gset} we prove that free groups have logspace normal forms,  a result which can be traced back to  \cite{\LohreyOndrusch}. We then
 establish some basic properties of logspace normal forms, including the fact that having a logspace
normal form is independent of finite generating set, and logspace normal forms can be computed in polynomial time.
In Sections \ref{sec:direct}--\ref{sec:wreath}
 we prove that
the class of groups with logspace normal forms is closed
under direct product,  finite index subgroups and supergroups, finite quotients, and wreath product. It follows that finitely generated abelian groups, and the so-called {\em lamplighter groups}, belong to the class. In Sections \ref{sec:freeprod}--\ref{sec:infiniteextensions}
we prove that the class is closed under free product in certain cases and
under certain infinite
extensions, but in both of these contexts we must impose
 restrictions in order for our
proofs to carry through.  In Section \ref{sec:BS} we present a normal form for solvable Baumslag-Solitar groups that can be computed in logspace.
In Section
\ref{sec:logspaceembeddable} we define a group to be {\em logspace
  embeddable} if it is a subgroup of a group with logspace normal
form, and prove various properties about the class of logspace
embeddable groups, and in Section \ref{sec:nilpotent}
we show that finitely generated nilpotent groups  are logspace embeddable.

The problem of logspace {\em geodesic} normal forms is decidably more subtle, and in this article we focus on normal forms that are not necessarily length-minimal.  In Section   \ref{sec:wreath} we show that wreath products such as $\Z\wr \Z^2$ have logspace normal forms, and comment that the problem of computing a geodesic normal form for this group (with respect to the standard generating set) was shown to be NP-hard in \cite{\Parry}, so the existence of a logspace geodesic normal form for this group seems unlikely.
On the other hand many of the normal forms we present here, such as those for free groups described in Proposition \ref{prop:freegp} and free abelian groups described in Corollary \ref{cor:freeabel}, are geodesic. We prove in Proposition \ref{prop:short} that a logspace normal form has length no more than polynomial in the geodesic length.
Recent work of Diekert, Kausch and Lohrey  \cite{\DKL} extends the class of groups with logspace geodesic normal forms to right-angled Artin groups and right-angled Coxeter groups, and gives a partial result for general Coxeter groups. 

It remains to see an example with polynomial time word problem that does not have a logspace normal form.
 Note that  by \cite{\SapirEtAl} a group has word problem in  NP
 if and only if it is a subgroup of a finitely presented group  with polynomial Dehn function.

The authors wish to thank Gilbert Baumslag, Volker Diekert, Arkadius Kalka, Alexei Miasnikov and Chuck Miller
 for very helpful insights and suggestions, and the anonymous reviewer for their careful reading, corrections and suggestions.

\section{Basic examples and properties of logspace normal forms}
\label{sec:gset}

%

%



We begin with a key example of a class of groups with logspace normal form.

Consider the free group $\langle a_1, \dots, a_k \ | \ -\rangle$ of rank $k$  with normal form the set of all freely reduced words over $X=\{a_1^{\pm 1},\dots, a_k^{\pm 1}\}$. An obvious algorithm to convert a word in $X^*$ would be to scan the word and when a canceling pair is read, delete it, step one letter back, and continue reading.  A logspace function  can only {\em read} the input, not write over it, so such an algorithm would not be logspace.  Instead, the following algorithm makes use of the fact that free groups are linear, and so have logspace decidable word problem.

\begin{prop}\label{prop:freegp}
Let $\langle a_1, \dots, a_k \ | \ -\rangle$ be the free group of  finite rank $k$ with normal form the set of all freely reduced words over $X= \{a_i^{\pm 1}\}$. Then there is a logspace computable function $f: X^* \rightarrow X^*$ such that $f(w)$ is the normal form word for $w$.
\end{prop}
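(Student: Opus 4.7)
The plan is to use the fact, due to Lipton and Zalcstein, that free groups of finite rank are linear and hence have word problem decidable in logspace, and to apply this as a subroutine for deciding, position by position, which letters of the input word $w = w_1 w_2 \cdots w_n$ survive in the freely reduced form of $w$.

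First I would establish the following characterization: the letter $w_i$ is cancelled in the free reduction of $w$ if and only if there exists an index $j \neq i$ such that $w_j = w_i^{-1}$ and the subword strictly between positions $i$ and $j$ represents the identity in the free group. This follows from the standard description of free reduction as a unique non-crossing matching on positions (equivalently, the stack-based reduction): if $i$ is matched with $j$ then by the non-crossing property every position strictly between them is matched with another position in the same interval, so the intervening subword is a product of cancelling pairs and reduces to the empty word. Conversely, if such a $j$ exists then $w_i w_{i+1} \cdots w_{j-1} w_j$ already reduces to the identity, so in particular $w_i$ is eliminated.

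Next I would describe the transducer. It maintains in binary on the work tape an outer counter $i$ ranging over the positions of the input, and, for each fixed $i$, an inner counter $j$ that sweeps over the other positions. For each pair $(i,j)$ with $w_j = w_i^{-1}$, the transducer invokes the Lipton-Zalcstein logspace word-problem algorithm on the subword of $w$ strictly between positions $i$ and $j$, reading it by moving the input head and comparing its current position to the stored bounds $i$ and $j$. If some such $j$ witnesses cancellation of $w_i$ according to the characterization, the transducer advances $i$ without writing; otherwise it prints $w_i$ to the output tape and advances $i$. Since surviving letters are printed in order of increasing $i$, the output is the freely reduced word, produced left to right as required.

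The main point to verify is that the whole computation fits in $O(\log n)$ space: the two counters together use $O(\log n)$ cells, and the word-problem subroutine uses another $O(\log n)$ cells on a separate portion of the work tape that is reinitialised before each call. The principal obstacle I anticipate is the careful composition of the subword word-problem check with the two outer loops without blowing the logspace bound; this is routine provided one keeps the three pieces of scratch space disjoint and observes that Lipton-Zalcstein's procedure can be driven on an arbitrary substring of the input by translating its head motions through the stored indices $i$ and $j$.
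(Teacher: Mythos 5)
Your logspace bookkeeping (two binary counters plus the Lipton--Zalcstein word-problem subroutine run on a substring delimited by stored indices) is exactly the right machinery and matches the paper's. The flaw is in the correctness claim, namely the characterization ``$w_i$ is cancelled if and only if there exists $j\neq i$ with $w_j=w_i^{-1}$ and the subword strictly between $i$ and $j$ trivial.'' The ``if'' direction is false, because the test is applied to each position independently and a single letter can serve as the partner of two different positions. Take $w=a\,a^{-1}\,a$: positions $1$ and $3$ both pass the test with $j=2$, and position $2$ passes with $j=1$ (or $j=3$), so your transducer prints nothing, whereas $w=_F a$. The step ``$w_i\cdots w_j$ reduces to the identity, so in particular $w_i$ is eliminated'' is the gap: that segment reduces to the identity \emph{as a standalone word}, but in a reduction of all of $w$ its letters may cancel against letters outside the segment (here $w_2$ cancels $w_1$, leaving $w_3$ alive). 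Indeed ``which occurrences survive'' is not even well defined independently of a choice of matching, so no pointwise test of this kind can be consistent. Restricting the partner to lie to the right does not repair it: in $w=a\,b\,b^{-1}\,a^{-1}\,a$ the letter $b^{-1}$ has no $b$ to its right, so it would be printed and the output would be $b^{-1}a$ instead of the reduced word $a$.

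The paper's algorithm avoids this by making the deletions globally consistent through a greedy left-to-right block deletion: standing at position $\mathrm{c1}$ with letter $x$, it searches only for a \emph{block} $x\,u\,x^{-1}$ beginning at $\mathrm{c1}$ with $u$ trivial, and if one is found it jumps the pointer past the entire block (sets $\mathrm{c1}=\mathrm{c2}+1$) rather than re-examining the interior letters individually; if none is found it prints $x$ and advances by one. Deleted letters are then consumed in pairwise disjoint trivial blocks, so the output equals $w$ in the group, and a short argument (if two consecutive surviving letters were inverse, the first would have had a trivial block ending at the second and would not have survived) shows the output is freely reduced. To salvage your write-up you would need to replace the per-position test by this block-skipping loop (or otherwise compute a consistent non-crossing matching), after which your space analysis goes through unchanged.
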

\begin{proof}
Fix two binary counters, $\mathrm{c1}$ and $\mathrm{c2}$ and set them both to 1.
\begin{enumerate}
\item  read the letter at position $\mathrm{c1}$ of the input tape (call it $x$).
 \item scan forward to the next $x^{-1}$ letter to the right of position $\mathrm{c2}$, and set $\mathrm{c2}$ to be the position of this $x^{-1}$.
 \item input the word from position $\mathrm{c1}$ to $\mathrm{c2}$ on the input tape into the logspace word-problem function for  the free group of rank $k$.
 \begin{itemize}
 \item if this function returns
  {\em trivial}, output nothing, set $\mathrm{c1}= \mathrm{c2}+1$, and return to Step (1).
  \item  if it returns  {\em non-trivial},  return to Step  (2).
  \end{itemize}
\item   if there is no next $x^{-1}$ letter, write $x$ to the output tape, set $\mathrm{c1}=\mathrm{c1}+1$, and return to Step (1).
 \end{enumerate}
 In other words, the algorithm reads $x$ and looks for a
 subword $xux^{-1}$ where $u$ evaluates to the identity. If it finds such a subword, it effectively cancels it by not writing it to the output  and moving forward to the next letter after $xux^{-1}$. If there is no such subword starting with $x$, then $x$ will never freely reduce, so it outputs $x$, then repeats this process on the next letter after $x$ on the input tape.
\end{proof}

This  result  can be traced back to  \cite{\LohreyOndrusch}. The proof gives some indication of how one works in logspace. In Proposition \ref{prop:freeClosure} below we prove a more general result, that the class of groups with a normal form computable in logspace is closed under  free products with logspace word problem.

The next lemma shows that logspace computable functions are closed under composition.
\begin{lem}
\label{lem:comp}
If $f, g: X^* \rightarrow X^*$ can both be computed in logspace,
then their composition $f \circ g: X^* \rightarrow X^*$ can also be computed in logspace.
\end{lem}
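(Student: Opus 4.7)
The plan is the standard composition trick for logspace transducers. The naive approach of computing $g(w)$ on the work tape and then feeding it to $f$ fails immediately, because $g(w)$ can be polynomially longer than $w$ and thus does not fit in $O(\log n)$ space. Instead I would simulate $f$ on a \emph{virtual} input tape holding $g(w)$, and whenever $f$'s read head wants to look at position $i$ of that virtual tape, I would re-run $g$ from scratch on $w$ to regenerate the $i$-th output symbol on demand.

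First I would record the quantitative facts I need. A deterministic logspace transducer runs in time polynomial in the input length (because its global configuration, consisting of state, work-tape contents, input-head position, and output-head position, has only polynomially many values and no configuration can repeat on a halting computation). Since it writes a bounded number of output symbols per step, $|g(w)| \le p(|w|)$ for some polynomial $p$, and therefore $\log |g(w)| = O(\log |w|)$. This is the key inequality that makes everything else fit in logspace.

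Next I would describe the composed transducer $M$. On input $w$, $M$ maintains on its work tape: (i) a simulation of $f$'s finite-state control, (ii) $f$'s work tape, which uses at most $c_f \log |g(w)| = O(\log |w|)$ cells, (iii) a binary counter $p$ holding the position of $f$'s virtual read head on $g(w)$, which takes $O(\log |g(w)|) = O(\log |w|)$ bits, and (iv) scratch room to run $g$ on $w$. Whenever the simulated $f$ attempts to read the symbol at position $p$, $M$ launches a fresh simulation of $g$ on $w$ in scratch space, keeping an additional binary counter $q$ for how many symbols $g$ has written so far; when $g$ writes its $p$-th output symbol, $M$ feeds that symbol to $f$, discards the scratch, and resumes. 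The $g$-simulation itself uses $c_g \log |w|$ cells for $g$'s work tape, plus $O(\log |g(w)|)$ for the counter $q$, which is still $O(\log |w|)$. Whenever $f$ writes to its own output tape, $M$ writes the same symbols to its (real) output tape, which is legal since $f$'s output head and $M$'s output head both move only left-to-right.

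The total workspace is $O(\log |w|)$, so $M$ is a logspace transducer, and by construction it outputs $f(g(w))$. The main subtlety to get right is the second bullet of (ii)--(iii): one must check that bounding $f$'s workspace and $f$'s head-position counter by $\log |g(w)|$ rather than $\log |w|$ is still logarithmic in the true input length, which is exactly where the polynomial output bound on $g$ is used. The rest is bookkeeping: reinitializing the $g$-simulation each time $f$ asks for a symbol, and correctly handling the end-of-input marker on the virtual tape (detected when the $g$-simulation halts before $q$ reaches $p$).
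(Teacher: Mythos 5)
Your proposal is correct and is essentially the paper's own argument: simulate $f$ on a virtual copy of $g(w)$, and whenever $f$ requests its $j$th input letter, re-run $g$ on $w$ with a binary counter of letters it would have output, feeding the $j$th such letter to $f$. Your additional observation that $|g(w)|$ is polynomially bounded (so the position counter and $f$'s workspace stay $O(\log |w|)$) is a useful explicit justification of a point the paper leaves implicit, but the approach is the same.
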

\begin{proof}
On input a word $w\in X^*$,
run the function $f$ and when $f$ calls for the $j$th input letter, run $g$ on $w$ but instead of outputting, each time $g$ would write a letter, add 1 to a counter (in binary). Continue running $g$ until the counter has value $j-1$, at which point, return the next letter $g$ would output to $f$.
\end{proof}

\begin{lem}
\label{lem:inv}
In a group that has a logspace computable normal form function $f$, the following basic group operations
can be performed in logspace:
\begin{enumerate}
\item we can test whether two words represent the same group element;
\item we can compute a normal form for the inverse of an element.
\end{enumerate}
\end{lem}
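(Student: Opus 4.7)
The plan for (1) is to reduce equality of two elements to checking whether the normal form of a single word equals the normal form of the identity. Given two input words $u, v \in X^*$ (presented together with a delimiter), I would form the string $uv^{-1}$, where $v^{-1}$ denotes the formal string reversal of $v$ with each letter inverted. Computing the map $w \mapsto w^{-1}$ as a string is logspace: scan the input once with a binary counter to find the length $n$, then for $i = n, n-1, \ldots, 1$ use the counter to position the read-head at position $i$ and output the inverse of the symbol found. Composing this string operation with $f$ via Lemma \ref{lem:comp} yields $f(uv^{-1})$ in logspace. Since $f$ is a bijection between the normal form set and $G$, we have $\overline{u} = \overline{v}$ iff $\overline{uv^{-1}} = 1$ iff $f(uv^{-1})$ equals the fixed word $e := f(\lambda)$, the normal form of the identity. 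As $e$ has constant length, one can test this equality by streaming the output symbols of $f(uv^{-1})$ one at a time and comparing against the hard-coded word $e$, using only $O(1)$ additional space beyond what $f$ itself uses.

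For (2), the same reversal-with-inversion operation gives the answer immediately. On input $w$, produce the string $w^{-1}$ in logspace as above and then apply $f$. By Lemma \ref{lem:comp} the composition $w \mapsto f(w^{-1})$ is logspace computable, and since $\overline{w^{-1}} = \overline{w}^{\,-1}$, its output is precisely the normal form of the inverse of $\overline{w}$.

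The main (mild) subtlety is that in (1) we cannot actually materialize the intermediate string $uv^{-1}$ nor the possibly very long string $f(uv^{-1})$ on the work tape; only $O(\log n)$ cells are available. This is handled by the on-the-fly simulation in Lemma \ref{lem:comp}: the comparator asks for the $j$-th output letter of $f$, which triggers a fresh simulation of $f$ whose input letters are themselves supplied by an on-demand simulation of the reversal-inversion procedure against the actual input tape. At each moment the machine holds only a constant number of counters of size $O(\log n)$ plus the workspaces of the three composed logspace transducers, so the overall procedure stays within logspace.
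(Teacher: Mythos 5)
Your proof is correct, but part (1) takes a different route from the paper's. The paper tests equality directly: it runs $f$ on $u$ and on $v$ simultaneously and compares the two output streams letter by letter without storing them, so $\overline{u}=\overline{v}$ iff the streams agree. You instead reduce equality to a word-problem-style check: form $uv^{-1}$ (formal inverse of $v$ appended), compute $f(uv^{-1})$ on the fly via Lemma \ref{lem:comp}, and compare against the hard-coded constant word $e=f(\lambda)$. This is valid --- since $X$ is symmetric the string $v^{-1}$ lies in $X^*$, and because $f$ sends a word to the unique normal form of the element it represents, $f(uv^{-1})=f(\lambda)$ iff $\overline{uv^{-1}}=1$ iff $\overline{u}=\overline{v}$; and $e$ is a fixed finite word that can live in the finite control. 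The paper's two-stream comparison is slightly more economical (no delimiter convention, no concatenation, no hard-coded constant, and it never needs the inversion map for equality testing), while your reduction has the mild virtue of making explicit that equality testing is just the word problem applied to $uv^{-1}$, which dovetails with how equality tests are used elsewhere in the paper. Your part (2) is essentially identical to the paper's: compose $f$ with the logspace reversal-with-inversion map (the paper phrases it as returning the formal inverse of the $(n-i+1)$th letter for $i=1,\dots,n$). One small wording quibble: $f$ itself is not a bijection onto $G$; the relevant facts are that the normal form language is in bijection with $G$ and that $f(w_1)=f(w_2)$ iff $\overline{w_1}=\overline{w_2}$, which is what your argument actually uses.
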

\begin{proof}
To test equality, compute $f$ on each word simultaneously and check that successive output letters are identical (without storing them).
To compute the normal form for the inverse of an element, compose $f$ with the (logspace) function that on input $w$, computes the length $n$ of $w$ in binary, then for $i=1$ up to this length returns the formal inverse of the $(n-i+1)$th letter of $w$.
\end{proof}

One might expect that
algorithms using a small amount of space do so at the expense of time,
but it is well-known that this is not the case. To provide further context
for the techniques employed in our proofs, we include here the
standard proof that logspace algorithms run in polynomial time.
\begin{lem} \label{lem:polyTime}
A deterministic logspace algorithm performs at most a polynomial
number of steps.
\end{lem}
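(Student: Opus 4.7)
The plan is to bound the number of distinct configurations reachable by such a machine and then exploit determinism to bound its running time. By a \emph{configuration} I mean the data that determines the future behaviour of the machine: the current state of the finite state control, the current contents of the work tape, the position of the work-tape head, and the position of the input-tape head. The output tape is write-only and left-to-right only, so what has already been printed cannot influence subsequent transitions and need not be recorded in a configuration.

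Next I would count configurations. Suppose the machine has $q$ states and a work-tape alphabet of size $k$. On an input of length $n$ the work tape uses at most $c\log n$ squares, so the work-tape contents have at most $k^{c\log n} = n^{c\log k}$ possibilities, the work-tape head has at most $c\log n$ possible positions, and the input-tape head has at most $n$ possible positions (or $n+2$ counting the two end-markers). Multiplying gives at most
\[
q \cdot n^{c\log k} \cdot c\log n \cdot (n+2) = O\bigl(n^{c\log k + 2}\bigr)
\]
distinct configurations, which is polynomial in $n$.

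Finally I invoke determinism. Because the transition function is deterministic, once the machine revisits a configuration it enters an infinite loop and never halts. But we are assuming the transducer computes a function, so it halts on every input. Therefore no configuration can be visited twice, and the total number of steps is bounded by the number of configurations, which is polynomial in $n$.

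I do not anticipate any serious obstacle: the whole argument is standard bookkeeping once one fixes the correct notion of configuration. The only subtlety worth flagging in the write-up is the deliberate omission of the output tape from the configuration, since including it would make the count exponential and break the argument.
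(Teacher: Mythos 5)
Your proof is correct and follows essentially the same route as the paper: count configurations (state plus work-tape data, with the input-head position folded in and the output tape deliberately excluded), note the count is polynomial in $n$, and use determinism plus halting to conclude no configuration repeats. Your version is just slightly more explicit about the head positions and the exponent $c\log k$, but the argument is the same.
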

\begin{proof}
Define a {\em configuration} of a logspace transducer to be the
contents of the work tape (which includes the position of the input tape read-head), and the current state of the finite state
control. If the work tape has $k$ allowable symbols, and the finite
state control has $d$ states, the total number of distinct
configurations possible on input a word of length $n$ is $dk^{c\log
  n}=O(n^c)$ where $c\log n$ is the maximum number of symbols the work
tape contains. If the machine were to take more than $dk^{c\log n}$ steps, then it would be
in the same configuration twice during the computation, and so would
enter an infinite loop (since the machine is deterministic). The
result follows.
\end{proof}

We next prove that the property of having a logspace normal form
is invariant under change of finite generating sets.

\begin{prop}
\label{prop:indGenSet}
Let $X,Y$ be two finite symmetric generating sets for a group $G$.  If
$(G,X)$ has a logspace normal form, then so does $(G,Y)$.
\end{prop}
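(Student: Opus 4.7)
The plan is to convert a $Y$-word to an $X$-word by substitution, apply the given logspace normal form function over $X$, and convert back to a $Y$-word by another substitution. The natural definition of the $Y$-normal form will be the image of the $X$-normal form under the back-substitution.

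First I would pick, for each $y\in Y$, a fixed word $\sigma(y)\in X^*$ with $\overline{\sigma(y)}=\overline{y}$ in $G$, and for each $x\in X$, a fixed word $\tau(x)\in Y^*$ with $\overline{\tau(x)}=\overline{x}$. Extend $\sigma$ and $\tau$ to homomorphisms of free monoids $\sigma:Y^*\to X^*$ and $\tau:X^*\to Y^*$. Each of these substitution maps is logspace computable: the transducer simply scans its input one letter at a time and, for each input letter, writes out the corresponding fixed word, using only the finite state control.

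Next, let $f_X:X^*\to X^*$ be the given logspace normal form function for $(G,X)$, with image $L_X\subseteq X^*$ the normal form. Define $L_Y:=\tau(L_X)\subseteq Y^*$; since $\tau$ preserves the represented group element and $L_X$ is in bijection with $G$, the set $L_Y$ is in bijection with $G$ provided $\tau|_{L_X}$ is injective, which we can arrange by taking the normal form over $Y$ to be this image (each group element $g$ is represented in $L_Y$ by the unique word $\tau(f_X(\sigma(w)))$ for any $w$ representing $g$). Define $f_Y := \tau \circ f_X \circ \sigma$. Then $f_Y(w)$ is a word in $Y^*$ representing the same element as $w$, and $f_Y(w)=f_Y(w')$ whenever $\overline{w}=\overline{w'}$, so $f_Y$ is the normal form function for $(G,Y)$ with normal form $L_Y$.

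Finally, $f_Y$ is logspace by two applications of Lemma \ref{lem:comp}: each of $\sigma$, $f_X$, $\tau$ is logspace computable, and the composition of logspace computable functions is logspace computable. The only thing to check is that the intermediate substituted word $\sigma(w)$ has length at most a constant times $|w|$, so that the space bound $c\log|\sigma(w)|=O(\log|w|)$ used internally by $f_X$ is still logarithmic in the original input length; this is immediate since $|\sigma(w)|\le (\max_{y\in Y}|\sigma(y)|)\cdot |w|$. The main (minor) subtlety is simply to notice that we must \emph{define} the $Y$-normal form as $\tau(L_X)$ rather than hope for an a priori given one; once the normal form is chosen this way, the logspace computability of $f_Y$ is formal.
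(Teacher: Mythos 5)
Your proof is correct, and it reaches the conclusion by a somewhat more direct route than the paper's. The paper first reduces to the case of adding or deleting a single generator, so that $X\subseteq Y$; this lets it take the normal form for $(G,Y)$ to be the $X$-normal form words themselves, viewed inside $Y^*$, and in the other direction it simply restricts the composition to $X^*$ --- so only one substitution map $Y^*\to X^*$ is ever needed, and the whole verification is that the two compositions with that map are normal form functions. You instead treat arbitrary $X$ and $Y$ in one step, using both substitution homomorphisms $\sigma$ and $\tau$, and accordingly you must define the $Y$-normal form to be the image $\tau(L_X)$; this is a legitimate and more symmetric argument, at the small cost of having to verify that $\tau\circ f_X\circ\sigma$ is genuinely a normal form function. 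On that point, note that the injectivity of $\tau$ on $L_X$ is not something you need to ``arrange'': it is automatic, because $\tau$ preserves the group element represented and $L_X$ maps bijectively to $G$, so $\tau(u)=\tau(u')$ with $u,u'\in L_X$ forces $\overline{u}=\overline{u'}$ and hence $u=u'$; the same one-line observation supplies the half of the verification you left implicit, namely that $f_Y(w)=f_Y(w')$ implies $\overline{w}=\overline{w'}$. Your closing remark about the length of the intermediate word $\sigma(w)$ is a sensible precaution, but it is already subsumed by Lemma~\ref{lem:comp} (the composition never stores $\sigma(w)$, and in general intermediate outputs have length polynomial in the input), so nothing extra is needed there.
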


\begin{proof}
It suffices to show that adding or deleting a generator does not affect
the existence of a logspace computable normal form function.
Suppose that $Y = X \cup \{ y,y^{-1} \}$, where $y \not
\in X$ and that $w_y \in X^*$ such that $\overline{w_y} = \overline{y}$.
Let $f: Y^* \rightarrow X^*$ be the function that takes a word in $Y^*$
to the word obtained by replacing each occurrence of $y$ with the
word $w_y$, and $y^{-1}$ by the formal inverse of the word $w_y$.
Notice that $\overline{f(u)} = \overline{f(v)}$ if and only if $\overline{u} = \overline{v}$, and that $f$ can be computed in logspace.

We first suppose that $g_X$ is a logspace computable normal form function,
we let $g_Y = g_X \circ f$, and we show that $g_Y$ is a logspace computable normal form function.
By Lemma \ref{lem:comp}, $g_Y$ is logspace computable, so we simply have to establish
that it is a normal form function. Since $f$ maps onto $X^*$, and since $g_X$ is a normal form function,
the natural map from $g_Y(Y^*)$ to $G$ is onto.
Let $u, v \in Y^*$ such that $g_Y(u) = g_Y(v)$.
Since $g_X$ is a normal form function, $\overline{f(u)} = \overline{f(v)}$ and hence $\overline{u}
= \overline{v}$.
Hence the natural map from $g_Y(Y^*)$ to $G$ is injective.
We have shown that $g_Y$ is a normal form function.

We next suppose that $g_Y$ is a logspace computable normal form function,
we let $g_X = f \circ g_Y$. By Lemma \ref{lem:comp}, $g_X$ is logspace computable,
so we only have to show that $g_X$ is a normal form function.
Let $g \in G$. There exists a $v \in g_Y(Y^*)$ such that $\overline{v} = g$.
Therefore, $\overline{f(v)} = g$
and hence the natural map from $g_X(X^*)$ to $G$ is onto.
Let $u, v \in X^*$ such that $g_X(u) = g_X(v)$.
Then $f(g_Y(u)) = f(g_Y(v))$ so $\overline{g_Y(u)} = \overline{g_Y(v)}$.
Since $g_Y$ is a normal form function, $\overline{u} = \overline{v}$.
Thus the natural map from $g_X(X^*)$ to $G$ is injective.
\end{proof}

It will be convenient to assume that the normal form for the identity
element is the empty string.
\begin{prop}\label{prop:emptyWord}
Let $G$ be a group with finite symmetric generating set $X$, and let
$g_X$ be a normal form for $G$ computable in logspace such that $g_X(\lambda)\neq \lambda$.
Define a new normal form $h_X$ for $G$ which is identical to
$g_X$ except that for words representing the identity, $h_X(w) = \lambda$.
Then $h_X$ is logspace computable.
\end{prop}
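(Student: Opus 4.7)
The plan is to compute $h_X$ by a simple two-phase logspace procedure. In the first phase, decide whether $\overline{w} = 1_G$; in the second phase, either output nothing or simulate $g_X$ on $w$, depending on the outcome of phase one.

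For phase one, I would apply Lemma \ref{lem:inv}(1) to the input word $w$ and the empty word $\lambda$. That lemma tells us that, in a group possessing a logspace normal form function, the equality $\overline{u} = \overline{v}$ of two input words can be tested in logspace (by running $g_X$ on each input in parallel and checking that successive output letters agree, without storing them). Taking $u = w$ and $v = \lambda$, this produces a single bit $b$, stored on the work tape, recording whether $\overline{w} = 1_G$.

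For phase two, if $b$ indicates $\overline{w} = 1_G$, halt with empty output, which realises $h_X(w) = \lambda$. Otherwise, rewind the input-tape read-head (which by assumption is just an integer counter stored in binary on the work tape, so can be reset to $1$) and simulate the logspace transducer for $g_X$ on $w$, copying its output to our output tape verbatim. Since by hypothesis $h_X$ agrees with $g_X$ on every input whose value is not the identity, the output is exactly $h_X(w)$.

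Finally, the composite machine is logspace: each of the two phases uses only $O(\log n)$ space on its own, and the only information carried from phase one to phase two is the single bit $b$. Hence no obstacle arises; the statement is essentially a corollary of Lemma \ref{lem:inv} together with the observation that sequential composition of logspace procedures with only constant-size intermediate data remains logspace.
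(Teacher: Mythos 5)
Your proof is correct. The underlying idea is the same one the paper exploits --- since $g_X(\lambda)$ is a fixed word, recognising inputs representing the identity reduces to checking whether the output of $g_X$ equals that constant --- but you organise the argument differently. The paper defines an auxiliary map $f:X^*\to X^*$ sending the single word $u=g_X(\lambda)$ to $\lambda$ and fixing everything else (computable with $u$ held in the finite state control), and then obtains $h_X=f\circ g_X$ from closure under composition (Lemma \ref{lem:comp}). You instead run the equality test of Lemma \ref{lem:inv}(1) on the pair $(w,\lambda)$, store the resulting bit, and in a second phase either halt with empty output or re-simulate $g_X$ on $w$; since only one bit crosses between phases and the second phase reads the original input, you do not even need Lemma \ref{lem:comp}. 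Your route is slightly more direct as a machine description, while the paper's is a cleaner functional decomposition that reuses its general composition lemma; both are valid, and your appeal to Lemma \ref{lem:inv} is legitimate since that lemma precedes this proposition and only requires the given $g_X$.
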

\begin{proof}
Let $u = g_X(\lambda)$ be of length $m>0$.
Let $f: X^* \rightarrow X^*$ be the map sending $u$ to $\lambda$
and acting as the identity on all other words. Since $m$ is a fixed constant, we can store the word $u$ in a finite state control. Then $f^*$ can be computed in logspace as follows:
using a (binary) counter, scan the input word to compute its length; if it has length $m$, for $i=1$ to $m$, check that the $i$th input letter is identical to the $i$th letter of $u$ (stored in the finite state control); if it is, return $\lambda$, otherwise, move to the start of the input tape and write each letter on the input tape from left to right onto the output tape.
Since $h_X = f \circ g_X$, by Lemma \ref{lem:comp},
$h_X$ is also computable in logspace.
\end{proof}

The next proposition gives a restriction on what types of normal form languages can be
calculated in logspace; namely, the length of the normal form is bounded by a polynomial
in the length of the input.

\begin{prop}
\label{prop:short}
If $G$ has a normal form over $X^*$ which can be computed in logspace,
then there is a constant $c$ such that the normal form for an input
word of length $n$ has length $O(n^c)$.
\end{prop}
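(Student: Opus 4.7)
The plan is to reduce this immediately to the polynomial-time bound already established in Lemma \ref{lem:polyTime}. A logspace transducer computing the normal form function runs on an input word of length $n$; by Lemma \ref{lem:polyTime}, it halts in at most $O(n^c)$ steps for some constant $c$ depending only on the machine.

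Next I would appeal to the definition of a logspace transducer (Definition \ref{def:transducer}): on each transition the machine writes at most a fixed constant number of letters to the output tape. Therefore the total length of the output cannot exceed that constant times the total number of steps executed, which is $O(n^c)$. Since the output on input $w$ is exactly the normal form word for $w$, this gives the required polynomial bound on its length.

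There is no real obstacle here, since both ingredients have already been set up in the paper. The only thing to be careful about is ensuring that the constant in the ``at most a fixed constant number of letters per transition'' clause of Definition \ref{def:transducer} is absorbed into the big-$O$, and that the exponent $c$ is the same constant produced by the proof of Lemma \ref{lem:polyTime} (so it depends only on the number of work-tape symbols, the number of states, and the logspace constant of the transducer, not on the particular input).
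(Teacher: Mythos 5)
Your argument is exactly the paper's proof: invoke Lemma \ref{lem:polyTime} to bound the number of steps by $O(n^c)$, then multiply by the constant bound $p$ on letters written per transition from Definition \ref{def:transducer} to bound the output length by $O(pn^c)$. It is correct and complete as stated.
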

\begin{proof}
Let $p$ be the maximum length of a word written to
the output tape in any one transition.  (Note there are a finite
number of possible transitions.)  By Lemma \ref{lem:polyTime}, on input
a word of length $n$, the computation takes a polynomial number of
steps, $O(n^c)$, and in each step at most $p$ letters can be written
to the output tape, so the maximum length of the output normal form
word is $O(pn^c)$.
\end{proof}

\section{Closure under direct product}\label{sec:direct}

\begin{prop}
\label{prop:directprod}
The set of groups with logspace normal forms is closed under direct
product.
\end{prop}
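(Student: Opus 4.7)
The plan is to take the obvious generating set $X \sqcup Y$ for $G \times H$ (using the given symmetric generating sets $X$ for $G$ and $Y$ for $H$, viewed as disjoint so each letter is tagged by which factor it belongs to), and exploit the fact that in a direct product the $X$-letters and $Y$-letters commute past each other. By Proposition \ref{prop:indGenSet} it suffices to produce a logspace normal form for $(G\times H, X\sqcup Y)$, since any other finite symmetric generating set will then inherit the property.

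First I would define two projection functions $\pi_X, \pi_Y : (X\sqcup Y)^* \to X^*, Y^*$ by scanning the input left to right and copying to the output exactly those letters belonging to $X$ (respectively $Y$) and skipping the rest. These projections need no work-tape space at all beyond what is needed to read the input, so they are trivially logspace computable. Because $G$ and $H$ commute inside $G\times H$, for any $w \in (X\sqcup Y)^*$ we have $\overline{w} = (\overline{\pi_X(w)}, \overline{\pi_Y(w)})$ in $G\times H$.

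Now let $f_X$ and $f_Y$ be the given logspace normal form functions for $(G,X)$ and $(H,Y)$. Define
\[
  F(w) \;=\; f_X(\pi_X(w)) \cdot f_Y(\pi_Y(w)),
\]
where $\cdot$ denotes concatenation. The image $F((X\sqcup Y)^*) = f_X(X^*) \cdot f_Y(Y^*)$ is in bijection with $G \times H$ via $(u,v) \mapsto (\overline{u}, \overline{v})$, so $F$ is a normal form function. To compute $F$ in logspace, I would first run the composition $f_X \circ \pi_X$ on $w$ (which is logspace by Lemma \ref{lem:comp}), writing its output to the output tape; then, using the fact that the output tape is write-only and moves only rightward, continue by running $f_Y \circ \pi_Y$ on $w$, again writing its output. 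The only extra bookkeeping is a single bit flagging which of the two phases we are currently in, stored in the finite state control.

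There is no real obstacle here: the only thing to be slightly careful about is that running two logspace transducers sequentially on the same input is itself a logspace procedure, which it clearly is since we reuse the same work-tape region for each phase and do not need to store the intermediate output $f_X(\pi_X(w))$ (it goes straight to the output tape). This completes the construction.
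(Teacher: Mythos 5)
Your proof is correct and follows essentially the same route as the paper: split the input into its $X$-part and $Y$-part, apply the two given normal form functions, and output the concatenation, computed by two sequential logspace passes over the input. The only cosmetic difference is that you phrase the passes via explicit projections and Lemma \ref{lem:comp}, whereas the paper simply describes reading $w$ twice while ignoring the letters of the other factor.
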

\begin{proof}
Let $G$ and $H$ be groups with symmetric generatoring sets $X$ and $Y$, and
with logspace normal form functions $g_X$ and $h_Y$ respectively.  We
may assume that $X$ and $Y$ are disjoint; let $Z$ be their disjoint
union.   Then $Z$ is a finite set of symmetric
generators for $G\times H$. Define $k_Z: Z^* \rightarrow Z^*$  as follows.
Let $w$ be a word in $Z^*$. Then there exist words $u \in X^*$ and $v
\in Y^*$ such that $w$ consists of $u$ and $v$ interleaved.  We let
$k_Z(w) = g_X(u) h_Y(v)$.  Note that $k_Z(Z^*)$ comprises a unique set
of representatives for $G\times H$.  We can compute $k_Z$ in logspace: read
$w$ once, ignoring all letters from $Y$ and computing $g_X(u)$; read
$w$ again, ignoring all letters from $X$ and computing $g_Y(v)$.
\end{proof}

\begin{cor}\label{cor:freeabel}
All finitely generated abelian groups have logspace normal form
functions.  In the case of $\Z^n$, if $t_1, t_2, \dots, t_n$ is a set
of free generators, the normal forms are of the form $t_1^{\alpha_1}
t_2^{\alpha_2} \dots t_n^{\alpha_n}$ with $\alpha_i\in\Z$.
\end{cor}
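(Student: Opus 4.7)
The plan is to combine the structure theorem for finitely generated abelian groups with the closure under direct products established in Proposition \ref{prop:directprod}. Recall that every finitely generated abelian group is a direct product of finitely many cyclic groups, each either infinite or finite. So it suffices to verify that each cyclic group has a logspace normal form, and then invoke Proposition \ref{prop:directprod} inductively (or all at once, since the construction there evidently extends to any finite direct product).

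First I would handle the infinite cyclic group $\langle a \mid -\rangle$ with generating set $\{a,a^{-1}\}$. This was already treated in the introduction: scan the input from left to right while maintaining a signed binary counter $\alpha$ that is incremented on each $a$ and decremented on each $a^{-1}$; since $|\alpha|\le n$, the counter occupies $O(\log n)$ space on the work tape. Once the input is fully read, write $a^{\alpha}$ (or $a^{-1}$ repeated $|\alpha|$ times if $\alpha<0$) to the output tape, using a second binary counter to control the loop. This produces the normal form $\{a^i : i \in \Z\}$.

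Next I would handle a finite cyclic group $\Z/m\Z = \langle a \mid a^m\rangle$ with normal form $\{a^i : 0 \le i < m\}$. Since $m$ is a fixed constant, we maintain the running exponent modulo $m$ in the finite state control, needing no work tape at all; at the end, output $a^k$ for the resulting residue $k$. Composing with the obvious logspace function that rewrites each $a^{-1}$ appropriately (or simply decrementing mod $m$ directly) gives a logspace normal form function. This computation is trivially logspace (indeed, finite-state).

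With the cyclic cases in hand, apply Proposition \ref{prop:directprod} finitely many times to conclude that any direct product of cyclic groups has a logspace normal form, establishing the corollary. For the explicit statement about $\Z^n$ with free generators $t_1,\dots,t_n$: using the normal form $\{a^i : i \in \Z\}$ for each $\Z$-factor, the direct-product construction from Proposition \ref{prop:directprod} interleaves the individual normal forms, yielding exactly $t_1^{\alpha_1}t_2^{\alpha_2}\cdots t_n^{\alpha_n}$ with $\alpha_i \in \Z$. There is no real obstacle here; the only minor bookkeeping point is that the direct-product proof as stated handles two factors, so one either iterates or observes that the same proof (read each $t_i$-subword in $n$ separate passes) works for any fixed $n$.
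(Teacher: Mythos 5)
Your proof is correct and follows essentially the same route as the paper: the paper's proof is a one-liner invoking Proposition \ref{prop:directprod} together with Proposition \ref{prop:freegp} (the rank-one free group being $\Z$, with freely reduced words $a^i$ as normal forms). You substitute the counter algorithm from the introduction for the $\Z$ factors and spell out the finite cyclic factors and the iteration over $n$ factors, details the paper leaves implicit, but the underlying argument (structure theorem plus closure under direct products) is the same.
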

\begin{proof}
The result follows from Propositions  \ref{prop:directprod} and \ref{prop:freegp}.
\end{proof}

\section{Closure under passing to finite index subgroups and supergroups}\label{sec:finiteextensions}

Let $G$ and $H$ be finitely generated groups with $G$ a finite index subgroup of $H$.
The goal of this section is to show that $G$ has logspace normal form if and only $H$ does.
To do so, we will show that the standard Schreier rewriting process for $H$ is logspace computable,
and from this our desired result will follow easily. 

We define a {\em rewriting process} for $G$ in the usual way (see, for example, \cite{\MKS}):
\begin{defn} Let $H$ be a group generated by a finite symmetric generating set $Y$.
Let $G$ be a subgroup of $H$, and let $W = \{w_1, w_2, \dots, w_m\}$ be a set of words over $Y$ that generate $G$. 
Let $S$ be the set of words over $Y$ that represent elements of $G$. 
Let $X = \{x_1, x_2, \dots, x_m, x_1^{-1}, x_2^{-1}, \dots, x_m^{-1}\}$ be a new alphabet (disjoint from $Y$), which we take to be a generating set for $G$ via the map that sends $x_i$ to $w_i$ and $x_i^{-1}$ to $w_i^{-1}$ (the formal inverse of $w_i$).
A {\em rewriting process for $G$ with respect to $W$} is a mapping $\tau$ from $S$ to $X^*$ such that 
for all words $u \in S$, $u$ and $\tau(u)$ represent the same element of $G$.
\end{defn}

When $G$ has finite index in $H$, a set $W$ of {\em standard Schreier representatives} of words over $Y$ that generate $G$ can be defined as follows.
Fix a set $R$ of words over $Y$ whose images in $H$ form a set of right coset representatives for $G$. 
For all $r \in R$ and $y \in Y$, let $g_{r,y}$ be the word in $S$ given by $g_{r,y} = ryq^{-1}$,
where $q \in R$ represents the coset $G \overline{ry}$. 
Then the set $W = \{ g_{r,y} \; | \; r \in R, y \in Y \}$ generates $G$ (see, for example, p. 89 of \cite{\MKS}).

The Schreier rewriting process for $G$ with respect to these generators can be described as follows.
Consider the Schreier graph for $G$ in $H$ (in which vertices are labeled with the coset representatives from $R$ and edges are labeled with generators from $Y$).
For a given word $w \in S$, initialize $\tau(w)$ to $\lambda$. Trace $w$ through the Schreier graph. When traversing an edge from $r$ labeled $y$, update $\tau(w)$ to be $\tau(w)g_{r,y}$. 
Then $\tau$ is a rewriting process for $H$ with respect to the Schreier generators (see, for example, p. 91 of \cite{\MKS}). Since our sets $R$ and $W$ and the Schreier graph can all be stored in a finite amount of space, it is clear that $\tau$ can be computed in logspace.

\begin{prop}\label{prop:finiteExtQuot}
Let $G,H$ be finitely generated groups with $G$ a finite index
subgroup of $H$. Then $H$ has logspace normal form if and only if $G$
has logspace normal form.
\end{prop}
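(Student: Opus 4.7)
The plan is to bootstrap off the Schreier rewriting map $\tau$, which the preceding discussion has shown is logspace, together with the substitution map $\sigma:X^*\to Y^*$ that replaces each Schreier generator $x_{r,y}$ by the $Y^*$-word $ryq^{-1}$. Since $\sigma$ replaces each input letter by a fixed bounded-length word, it is plainly logspace. Both implications of the proposition then reduce to composing $\tau$, $\sigma$, and the assumed logspace normal form function, invoking Lemma~\ref{lem:comp}. The real content is verifying that these compositions land in bijection with the appropriate group.

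For the direction ``$H$ has logspace normal form implies $G$ does,'' let $h_Y$ be the given function for $H$ and define $g_X := \tau \circ h_Y \circ \sigma$ as a map $X^*\to X^*$. For any $u\in X^*$ with $\overline{u}=g\in G$, the word $\sigma(u)$ represents $g$ in $Y^*$, so $h_Y(\sigma(u))$ is the unique $H$-normal form for $g$ and still lies in $S$; applying $\tau$ yields an $X^*$-word representing $g$. Distinct elements of $G$ produce distinct $h_Y$-normal forms, and $\tau$ sends two words in $S$ representing distinct elements of $G$ to distinct $X^*$-words (else they would represent the same element). Hence $g_X$ is a logspace normal form function for $G$.

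For the direction ``$G$ has logspace normal form implies $H$ does,'' fix $R$; each $h\in H$ admits a unique factorisation $h=g\cdot r$ with $g\in G$ and $r\in R$. Given $w\in Y^*$, output $\sigma(g_X(\tau(wr^{-1})))\cdot r$, where $r=r(w)$ is the coset representative of $G\overline{w}$. The transducer first scans $w$ to determine $r(w)$ by tracking the current coset in its finite-state control; it then runs the logspace composition $\sigma\circ g_X\circ\tau$ on the virtual input $wr^{-1}$, supplying the bounded-length tail $r^{-1}$ from finite state; finally it writes $r$. Bijectivity with $H$ is automatic: two such outputs agreeing as $Y^*$-words would represent the same element of $H$, contradicting the uniqueness of the $(g,r)$ decomposition.

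The main obstacle I expect is the bookkeeping in the forward direction: the representative $r(w)$ depends on the entire input and is needed both inside $\tau$ (via $r^{-1}$) and as the final appended suffix. The resolution is that $R$ is finite, so $r(w)$ fits in the finite-state control and can be retained across the whole computation after a preliminary pass over $w$. Beyond this the remaining verification is routine: the internal composition of three logspace maps is logspace by Lemma~\ref{lem:comp}, and the bijectivity arguments above confirm that the resulting functions are genuine normal form functions.
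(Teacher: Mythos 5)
Your proposal is correct and follows essentially the same route as the paper: both directions are obtained by composing the logspace Schreier rewriting $\tau$, the generator-substitution map, and the assumed normal form function via Lemma~\ref{lem:comp}, with the coset representative $r(w)$ computed by tracing the Schreier graph in finite state and appended at the end. Your only deviation is cosmetic: by inserting $\sigma$ after $g_X$ you keep the normal form for $H$ over the alphabet $Y$, making explicit a point the paper's own definition $h(w)=g(wr^{-1})r$ leaves slightly implicit.
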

\begin{proof}
Throughout this proof we use the notation established above. 
We begin by assuming that $H$ has logspace normal form $h$. 
We define our normal form $g$ for $G$ as follows.
Each word $w$ over $X$ can be transformed into a word $w'$ over $Y$
by replacing the each occurrence of a letter $x_i$ with the corresponding word $w_i$
from $W$, and by replacing each occurrence of a letter $x_i^{-1}$ with the formal
inverse of the word $w_i$. 
Then $g(w)$ can be defined to be $\tau(h(w'))$.
Since $h$ and $\tau$ can both be computed in logspace,
by Lemma \ref{lem:comp}, so can $g$.

Next we assume that $G$ has logspace normal form $g$.
For a word $w$ over $Y$, we define $h(w)$ to be $g(w')r$, where
$r$ is the word in $R$ representing the coset $G \overline{w}$
and $w' = w r^{-1}$. 
To compute $h(w)$ in logspace, we trace $w$ in the Schreier graph to compute and store $r$.
We then call the normal form function $g$. When it asks for the $i$th
letter we supply it with the $i$th letter of $w r^{-1}$. 
When it asks to output a letter, we do so.
Finally we output $r$.
\end{proof}

\section{Closure under  wreath product}\label{sec:wreath}

In this section we prove that the property of having a logspace normal form is closed under
restricted wreath products. Propositions \ref{prop:indGenSet} and \ref{prop:emptyWord} allow us to assume  from now on that generating sets contain only non-trivial elements, and if $f_X$ is a logspace normal form function over a generating set $X$ then
 $f_X(\lambda)=\lambda$.

\begin{defn}  Given an ordered alphabet $X$, let $\le_{SL}$ denote the short-lex ordering on $X^*$. \end{defn}

\begin{lem}
\label{lem:shortLex}
Let $G$ be a group with symmetric generating set $X$ and logspace normal form function $f_X$.  The short-lex order of the normal form of two words in $X^*$ is logspace computable.\end{lem}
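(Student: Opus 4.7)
The plan is to reduce the comparison to two logspace subroutines: one that computes the length $|f_X(w)|$ of the normal form of a word $w\in X^*$, and one that, given $w$ and an index $i$, returns the $i$th letter of $f_X(w)$. Both are implemented by simulating the logspace transducer for $f_X$ while using a binary counter to track the number of output letters produced. To compute $|f_X(w)|$, I would simulate $f_X$ on $w$ but, instead of writing output letters, increment the counter by one each time $f_X$ would emit a letter; by Proposition \ref{prop:short}, $|f_X(w)|$ is bounded by a polynomial in $|w|$, so the counter fits in $O(\log|w|)$ space. To extract the $i$th letter of $f_X(w)$, run the same simulation and halt as soon as the output counter reaches $i$, keeping the letter just produced in the finite state control.

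To decide the short-lex order of $f_X(u)$ and $f_X(v)$, I would first compute $m = |f_X(u)|$ and $n = |f_X(v)|$ using the first subroutine, and compare them; if $m \neq n$ the shorter word precedes and we are done. If $m = n$, iterate a binary counter $i$ from $1$ to $m$: at each step extract the $i$th letter of $f_X(u)$ and the $i$th letter of $f_X(v)$ using the second subroutine, compare them under the fixed order on $X$, and return the strict inequality as soon as the two letters differ. If the loop completes without finding a difference, the two normal forms coincide.

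At any instant the persistent storage consists of the loop counter $i$, the two length registers $m$ and $n$, and the local workspace of a single invocation of a subroutine; all are $O(\log(|u|+|v|))$ in size, so the whole procedure is logspace. By Lemma \ref{lem:polyTime} each subroutine call halts in polynomial time, and the outer loop runs at most $m$ times (which is polynomial in $|u|+|v|$), so the overall running time is polynomial as well.

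I do not anticipate a serious obstacle: the construction is essentially the same composition-by-simulation trick used in the proof of Lemma \ref{lem:comp}, applied in parallel to $u$ and $v$. The one substantive point to verify is that all counters stay of logarithmic size, and this is exactly what Proposition \ref{prop:short} provides, since a bound of the form $|f_X(w)| = O(|w|^c)$ lets indices into $f_X(w)$ be stored in $O(\log|w|)$ bits.
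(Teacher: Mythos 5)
Your proposal is correct and follows essentially the same route as the paper: first compare the lengths $|f_X(u)|$ and $|f_X(v)|$ via binary counters (justified by Proposition \ref{prop:short}), then compare the normal forms letter by letter and return the verdict at the first disagreement. The only cosmetic difference is that the paper obtains successive output letters by running the two simulations of $f_X$ in parallel, whereas you restart the simulation for each index $i$ in the style of Lemma \ref{lem:comp}; both are logspace, so this changes nothing of substance.
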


\begin{proof} Let $(u,v)\in X^*\times X^*$ be given.  We need to decide whether
 \begin{itemize}
\item $f_X(u)=f_X(v)$,\item $f_X(u)<_{SL} f_X(v)$, or 
\item $f_X(v)<_{SL} f_X(u)$.
\end{itemize}
We first call $f_X$ on $u$, but rather than write any output, each time a letter would be written to the output tape, we increase a counter, stored in binary.  We then do the same for $v$ and compare the two counters, if $|f_X(u)|< |f_X(v)|$ or $|f_X(v)|< |f_X(u)|$ we are done.

If not, call $f_X$ on $u$ and $v$ simultaneously to obtain the first letter of each output. If the letters are the same, obtain the next letter.  As soon as we encounter an $i$ for which the $i$th letters do not agree,
we can deduce which word is greater in the short-lex ordering, and if not, we deduce that $f_X(u)=f_X(v)$.
\end{proof}

We now establish some notation that will be useful in defining our normal form for $G\wr H$.
Let $G=\langle X\rangle$ and $H=\langle Y\rangle$  be groups with logspace normal form functions $f_X$ and  $f_Y$ respectively.  We may assume that $X$ and $Y$ are disjoint, finite symmetric generating sets.

Let $w\in (X\sqcup Y)^*$.  We will use $X(w)$ to denote the word in $X^*$ obtained by deleting all letters not in $X$ from $w$, and similarly for $Y(w)$.   For $w=a_1a_2\dots a_n$, $a_j\in X\sqcup Y$, and $1\leq i\leq n$, we will let $ X(i,w)$ (or $ Y(i,w)$) denote the word $X(a_1a_2\dots a_i)$ (or $Y(a_1a_2\dots a_i)$). For convenience, set $X(0,w)=\lambda$ and $Y(0,w)=\lambda$. Note that $X(i,w)$ (and $Y(i,w)$) are computable in logspace: if $w=a_1\ldots a_n$, set $j=0$; while $j<i$, increment $j$ by 1 and if  $a_j\in X$, output $a_j$.

Define $V(w)=\{f_Y(Y(s,w)) \ | \ 0\leq s\leq n\}$.
 Then $V(w)$ is a finite set of strings of $Y^*$. Note that $\lambda=f_Y(Y(0,w))$ is the shortest element in $V(w)$.  Set $v_0=\lambda$. The next lemma tells us how to compute the next element of $V(w)$ in  shortlex order in logspace, assuming the word $w$ is written on the input tape.
First, we need a way to store a word in $V(w)$ without using too much space, so to store a word in $V(w)$ corresponding to the element represented by $Y(i,w)$, we merely store the value $i$. To recover the word $v_i$, we run $f_Y$ on the word $Y(i,w)$.

\begin{lem}
\label{lem:shortLexV}
Let $w=a_1\dots a_n\in  (X\sqcup Y)^*$ be written on an input tape, and $V(w)=\{f_Y(Y(s,w)) \ | \ 0\leq s\leq n\}$. There is a logspace function which, given an integer $p$ such that $Y(p,w)=v_i$, computes $q$ such that  $Y(q,w)=v_{i+1}$ where $v_{i+1}$ is the next largest word from $v_i$ in shortlex order,  or returns that $v_i$ is the largest word in $V(w)$.
\end{lem}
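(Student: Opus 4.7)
The plan is a straightforward linear scan: iterate $q$ from $0$ to $n$, maintaining on the work tape a single integer $q^{\ast}$ that records the position of the shortlex-smallest element of $V(w)$ encountered so far that is strictly greater than $v_i$, initially \emph{undefined}. After the scan, I output $q^{\ast}$ if it was ever set, and otherwise emit a flag indicating that $v_i$ is the maximum of $V(w)$. Since $p$, $q$, and $q^{\ast}$ are each integers in $\{0,1,\ldots,n\}$, the bookkeeping takes only $O(\log n)$ cells.

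The nontrivial subroutine is comparing two elements of $V(w)$ that are specified only by their indices. Writing $P_i(w) = f_Y(Y(i,w))$, the remark immediately preceding the lemma establishes that $w\mapsto Y(i,w)$ is logspace computable when $i$ is stored on the work tape, and $f_Y$ is logspace by hypothesis, so Lemma \ref{lem:comp} produces a logspace transducer computing $P_i$. Lemma \ref{lem:shortLex} then compares $P_p(w)$ and $P_q(w)$ in logspace by running the two transducers in lockstep: first comparing their output lengths via binary counters, and then, if the lengths agree, stepping through the outputs letter by letter to locate the first disagreement (without ever storing either full output). I would invoke this routine twice per iteration of the outer loop: first to test $P_q(w) >_{SL} v_i$, and, if this holds and $q^{\ast}$ has already been set, once more to test $P_q(w) <_{SL} P_{q^{\ast}}(w)$, updating $q^{\ast}\gets q$ when appropriate.

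The main obstacle is verifying that nesting these simulations—an outer sweep in $q$, invoking a shortlex comparator, which in turn composes $f_Y$ with the letter-filter $Y(\cdot,w)$—still fits within a logarithmic space budget. Since the depth of composition is a fixed constant, and each layer (by the construction in the proof of Lemma \ref{lem:comp}) blows up the work-tape footprint by only a constant factor, the total space remains $O(\log n)$. No intermediate words from $Y^{\ast}$ are ever written down explicitly; only the three indices $p$, $q$, $q^{\ast}$, together with the fixed collection of counters used inside the comparators, occupy the work tape.
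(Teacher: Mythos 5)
Your proposal is correct and follows essentially the same route as the paper: a single left-to-right sweep over prefix positions, maintaining one stored index for the shortlex-least element of $V(w)$ found so far that exceeds $v_i$, with all comparisons delegated to the logspace comparator of Lemma \ref{lem:shortLex} applied to $f_Y(Y(\cdot,w))$, and the unset-index case reported as ``$v_i$ is maximal.'' The only differences are cosmetic (you scan every position rather than only those whose letter lies in $Y$, and you spell out the constant-depth composition space bound more explicitly), so nothing further is needed.
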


\begin{proof}
 Feed $(Y(p,w),Y(1,w))$ into the algorithm in Lemma \ref{lem:shortLex}, and if $$f_Y(Y(p,w))<_{SL}f_Y(Y(1,w)),$$ set $q=1$. So $q$ encodes a word from $V(w)$ that is larger in the shortlex ordering than $v_i$ encoded by $p$.

For each $2\leq j\leq n$, read $a_j$, and if $a_j\in Y$, feed $(Y(p,w),Y(j,w))$ into the algorithm in Lemma \ref{lem:shortLex}. If $Y(j,w)$ is larger than $Y(p,w)$, check to see if $q$ has been assigned a value.  If not, set $q=j$.  If $q$ already has a value, feed  $(Y(q,w),Y(j,w))$  into the algorithm in Lemma \ref{lem:shortLex}. If $Y(j,w)$ is shorter than $Y(q,w)$, set $q=j$. So $q$ encodes an element in $V(w)$ that is greater that $v_i$ and less  than the previous  $Y(q,w)$.

When  every $j$ up to $n$ has been checked, if $q$ has not been assigned a value, then $v_i$ is the largest word in $V(w)$. Otherwise $q$ encodes the next largest word $v_{i+1}=Y(q,w)$.
\end{proof}

\begin{defn}[Normal form for $G\wr H$]
Let $w=a_1\dots a_n\in (X\sqcup Y)^*$. Then $$f_{X\sqcup Y}(w)=
u_1^{v_1}u_2^{v_2}\dots u_k^{v_k}f_Y(Y(w)),$$ where  $v_i\in f_Y(Y^*)$ with $v_i<_{SL} v_{i+1}$ and $u_i\in f_X(X^*)$, $u_i\not=\lambda$.
(Note that by $u_i^{v_i}$ we mean $ v_i u_i f_Y(v_i \inv)$. Lemma \ref{lem:inv} says $f_Y(v_i \inv)$ can be computed in logspace if $f_Y$ can.)

The words $v_i$ correspond to elements of $H$ for which the factor of $\bigoplus_{h\in H} G_h$ is non-trivial, so the $v_i$s are a subset of $V(w)$.
The words $u_i$ correspond to the non-trivial element of $G$ at each position $v_i$ in $H$. The prefix of the normal form word does the job of moving to each position in $H$ and fixing the value of $G$ at that position. The shortlex ordering of $V(w)$ allows us to do this in a canonical way for any word representing an element of $G\wr H$.
The suffix $f_Y(Y(w))$ takes us from the identity of $H$ to the final position in $H$.\end{defn}

Since we know how to compute the $v_i$ in shortlex order from Lemma \ref{lem:shortLexV}, all we need now is to compute the $u_i$ at each position.

\begin{lem}[Algorithm to compute $u_i$]
\label{lem:computeu}
Let $w=a_1\dots a_n\in  (X\sqcup Y)^*$ be written on an input tape,  $V(w)=\{f_Y(Y(s,w)) \ | \ 0\leq s\leq n\}$, and $p$ an integer such that $Y(p,w)=v_i\in V(w)$. There is a logspace function that decides whether the element of $G$ in the factor corresponding to $v_i$ in $\bigoplus_{h\in H} G_h$ is non-trivial, and a logspace function that outputs the normal form $f_X$ of this element.\end{lem}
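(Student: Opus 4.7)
The plan is to build $u_i$ as the composition $f_X \circ E_p$, where $E_p$ is an extraction function that, given $w$ on the input tape and the index $p$ stored on the work tape, writes out (in order of occurrence in $w$) precisely those $X$-letters of $w$ that are ``typed'' at the $H$-position $v_i$. Recalling how a word in $(X\sqcup Y)^*$ is evaluated in $G\wr H$, the letter $a_j\in X$ contributes to the $G$-factor indexed by the element $\overline{Y(j-1,w)}\in H$; so the element of $G$ at position $v_i$ is represented by the subword $a_{j_1}a_{j_2}\cdots a_{j_\ell}$ consisting of exactly those $X$-letters $a_j$ for which $f_Y(Y(j-1,w))=v_i=f_Y(Y(p,w))$.

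First I would describe $E_p$ in detail and verify it is logspace. On request for its $k$th output letter, $E_p$ maintains a counter $m$ initialized to $0$ and scans $j$ from $1$ to $n$; for each $j$ it reads $a_j$, tests whether $a_j\in X$, and if so invokes the comparison function of Lemma \ref{lem:shortLex} on the pair $(Y(j-1,w),\,Y(p,w))$ (recall that $Y(j-1,w)$ and $Y(p,w)$ are themselves computable from $w$ in logspace by the procedure described just before Lemma \ref{lem:shortLexV}). When the comparison reports equality of normal forms, $m$ is incremented; when $m$ reaches $k$, the current $a_j$ is returned. All intermediate storage consists of $O(\log n)$ counters plus the logspace work tape consumed by the calls to $f_Y$ and to the comparator, so $E_p$ is logspace computable with $w$ on the input tape and $p$ on the work tape.

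Now I would invoke Lemma \ref{lem:comp} (whose proof goes through unchanged when the ``outer'' function $E_p$ carries an auxiliary logspace-bounded parameter $p$) to conclude that $f_X\circ E_p$ is logspace computable. This gives the desired output $f_X(E_p(w))$, which is by definition the normal form of the $G$-entry at position $v_i$. For the decision version (whether this entry is non-trivial), I would run the same simulation of $f_X$ on the virtual input $E_p(w)$ but, instead of writing to the output tape, set a single flag the first time $f_X$ attempts to emit a letter. Since we have standardized $f_X(\lambda)=\lambda$ via Proposition \ref{prop:emptyWord} and $f_X$ is a normal form (hence injective on group elements), the $G$-entry is trivial if and only if $f_X$ emits nothing; so the flag answers the decision question in logspace.

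The only mildly delicate point — and the one I would be most careful about in writing it up — is the book-keeping that lets $E_p$ answer individual letter-requests independently: each call must recompute $Y(j-1,w)$ and $f_Y(Y(j-1,w))$ from scratch, rather than try to remember state between requests. Since Lemma \ref{lem:shortLex} is available as a black box and the composition framework of Lemma \ref{lem:comp} already handles the restart pattern (by caching only counters rather than intermediate words), this is routine rather than a genuine obstacle. No computation ever stores a word of $V(w)$ or a prefix of $E_p(w)$ explicitly; everything is recomputed on demand from the index $p$ and the input $w$.
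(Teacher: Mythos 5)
Your proposal is correct and follows essentially the same route as the paper: the paper's proof defines exactly your extraction map (there called $g_{X\sqcup Y}$), which scans $w$ and outputs the $X$-letters whose preceding $Y$-prefix has the same $f_Y$-normal form as $Y(p,w)$ (tested via Lemma \ref{lem:shortLex}), then composes with $f_X$ using Lemma \ref{lem:comp} and decides triviality by checking whether the output is $\lambda$. Your extra remarks about recomputing $Y(j-1,w)$ on demand and about the emptiness test relying on the convention $f_X(\lambda)=\lambda$ are just more explicit versions of what the paper leaves implicit.
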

\begin{proof}
Define a function $g_{X\sqcup Y}:(X\sqcup Y)^*\rightarrow X^*$ which computes a word in $X^*$ equal to the element of $G$ in the factor corresponding to $v_i=Y(p,w)$ in $\bigoplus_{h\in H} G_h$ as follows.

\begin{enumerate}
\item  compute the length  $n$ of $w$ and store it in binary.
\item set a counter $l=0$.
\item while $l<n$:
\begin{itemize} 
\item  call the function in Lemma \ref{lem:shortLex}  on $Y(p,w)$ and $Y(l,w)$ to decide if they are equal or not. If they are equal, set a boolean variable $b$ to be true, and otherwise set it to false.
\item while $l<n$ and  the letter at position $l+1$ is  in $X$:
\begin{itemize} 
\item if $b$ is true, print the letter at position $l+1$ to the output tape
\item increment $l$ by 1.
\end{itemize}
\end{itemize}
\end{enumerate}

Since the function in  Lemma \ref{lem:shortLex}  is logspace then so is $g_{X\sqcup Y}$. The algorithm works by scanning the input word from left to right, and outputting only those letters from $X(w)$ that are in the factor corresponding to $v_i$ in $\bigoplus_{h\in H} G_h$.

Then $f_X\circ g_{X\sqcup Y}$ will output the normal form word in $X^*$ for the element of $X$ in the copy of $G$ corresponding to the element $v_i\in H$.
To decide if this element is trivial or not, run the above procedure and test whether the output is $\lambda$ or not.
\end{proof}

\begin{thm} \label{closureWreath}
The normal form  function $f_{X\sqcup Y}$ for $G\wr H$ can be computed in logspace.\end{thm}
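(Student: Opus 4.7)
The plan is to iterate through the elements of $V(w)$ in short-lex order, outputting for each one the corresponding block $v_i u_i f_Y(v_i^{-1})$ of the normal form (skipping those $v_i$ whose associated $u_i$ is trivial), and then finishing with the suffix $f_Y(Y(w))$. All of the pieces needed to do this have already been built in Lemmas \ref{lem:shortLexV}, \ref{lem:computeu}, \ref{lem:comp} and \ref{lem:inv}, so the work is to assemble them carefully while keeping the total amount of work-tape space used to $O(\log n)$.

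Concretely, I would have the transducer maintain a single $O(\log n)$-bit counter $p$ which always encodes the current element $v = f_Y(Y(p,w)) \in V(w)$. Initialize $p=0$, so $v = \lambda$. At each iteration, first invoke the function of Lemma \ref{lem:computeu} on $(w,p)$ to decide whether the $G$-coordinate $u$ at position $v$ is non-trivial. If it is trivial, do nothing. If it is non-trivial, write out three blocks in order: (i) $f_Y(Y(p,w))$, obtained by composing (via Lemma \ref{lem:comp}) the logspace map $i \mapsto Y(i,w)$ with the logspace function $f_Y$; (ii) the word $f_X(g_{X\sqcup Y}(w))$ from Lemma \ref{lem:computeu}, which is the normal form of $u$; and (iii) $f_Y(Y(p,w)^{-1})$, obtained by further composing with the inverse function from Lemma \ref{lem:inv}. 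Then call Lemma \ref{lem:shortLexV} to replace $p$ with the integer encoding the next element of $V(w)$ in short-lex order. When Lemma \ref{lem:shortLexV} reports that $v$ is the largest element of $V(w)$, exit the loop and output the suffix $f_Y(Y(w)) = f_Y(Y(n,w))$ by one more composition of $f_Y$ with the logspace map computing $Y(n,w)$.

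Correctness is immediate from the definition of $f_{X\sqcup Y}$: the loop enumerates exactly the elements of $V(w)$ in short-lex order, and writes a $v_i u_i f_Y(v_i^{-1})$ block precisely for those $v_i$ at which the $G$-coordinate is non-trivial, in the correct order, before appending $f_Y(Y(w))$. For the space bound, the transducer holds only a constant number of $O(\log n)$-bit integers (the outer counter $p$, the counters used inside Lemmas \ref{lem:shortLexV} and \ref{lem:computeu}, and the counter used by the composition mechanism of Lemma \ref{lem:comp}), plus the work tape of whichever subroutine is currently running; since each subroutine is itself logspace, the total is still $O(\log n)$.

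The only mildly subtle point, and the step I would check most carefully, is the repeated use of Lemma \ref{lem:comp}: each of the three blocks written per iteration is the output of a composition of two or three logspace functions (e.g.\ $f_Y$ with $i \mapsto Y(i,w)$, and, for block (iii), also with the inverse function). One must verify that these compositions can be nested and interleaved without the counters used by Lemma \ref{lem:comp} stacking up in a way that exceeds logarithmic space. Since each level of composition only adds one additional $O(\log n)$ counter and the depth of nesting is bounded by a constant independent of $n$, this is fine, but it is the bookkeeping step where a careless implementation could accidentally use $\omega(\log n)$ space.
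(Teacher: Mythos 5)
Your proposal is correct and follows essentially the same route as the paper's own proof: iterate a counter $p$ through $V(w)$ in short-lex order via Lemma \ref{lem:shortLexV}, use Lemma \ref{lem:computeu} to test triviality and emit $u_i$, use Lemma \ref{lem:inv} for $f_Y(v_i^{-1})$, and finish with $f_Y(Y(w))$. Your added remarks on the constant-depth nesting of compositions via Lemma \ref{lem:comp} are a fine elaboration of the same space-accounting the paper leaves implicit.
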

\begin{proof}
Set $p=0$ (so  $Y(p,w)=v_0=\lambda$,  the shortest element in $V(w)$).  
Set a boolen variable max to be false.
While max is false:
\begin{itemize}
\item 
use Lemma \ref{lem:computeu} to determine whether the element in $G$ at $Y(p,w)$ is non-trivial. If it is, output $f_Y(Y(p,w))=v_i$.
Then output $u_i$ by running the algorithm in  Lemma \ref{lem:computeu} again. Then output $f_Y(v_i^{-1})$ (apply Lemma \ref{lem:inv} to function that computes $f_Y(Y(p,w))$).
\item run the algorithm Lemma  \ref{lem:shortLex} with input $p$. If the algorithm returns that $Y(p,w)$ is maximal in $V(w)$, set the variable max to be true. Otherwise it finds $q$ such that $Y(p,w)=v_i$ and $Y(q,w)=v_{i+1}$. Set $p=q$.
\end{itemize}
Finally, output $f_Y(Y(w))$.
 \end{proof}

It follows that the class of groups with logspace normal form includes the  so-called  {\em lamplighter groups}, and the group $\Z\wr \Z^2$ (which Parry considered in \cite{\Parry}, showing with respect to a standard generating set finding a geodesic form for a given word is $NP$-hard, and so a geodesic normal form for it is unlikely to be logspace computable).

In \cite{\Waack} Waack gives an example of a group with logspace word problem that is not residually finite, and hence non-linear.
Theorem \ref{closureWreath} allows us to construct non-linear and non-residually finite groups having logspace normal forms.

\begin{cor}
Not all groups with a logspace normal form are linear.
\end{cor}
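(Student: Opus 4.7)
The plan is to exhibit a concrete finitely generated group that has a logspace normal form but fails to be residually finite, and then invoke Mal'cev's classical theorem (every finitely generated linear group is residually finite) to rule out linearity. A natural candidate is the restricted wreath product $F_2 \wr \Z$, where $F_2$ denotes the free group of rank two.

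First I would verify that $F_2 \wr \Z$ has a logspace normal form. By Proposition \ref{prop:freegp}, $F_2$ has a logspace normal form function; by Corollary \ref{cor:freeabel}, so does $\Z$. Theorem \ref{closureWreath} then immediately provides a logspace normal form for $F_2 \wr \Z$.

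To complete the argument, I would cite Gruenberg's theorem: the restricted wreath product $A \wr B$ of two residually finite groups is itself residually finite if and only if $A$ is abelian or $B$ is finite. Since $F_2$ is non-abelian and residually finite, and $\Z$ is infinite and residually finite, this gives that $F_2 \wr \Z$ is not residually finite. By Mal'cev's theorem it is therefore not linear, proving the corollary.

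The main obstacle here is not really technical at all; the heavy lifting has been done by Theorem \ref{closureWreath}, and the rest is a matter of choosing a convenient example and pointing to the correct classical results. As an alternative one could use $S_3 \wr \Z$ (with $S_3$ the symmetric group on three letters, which is finite and non-abelian), obtaining an essentially identical argument: any non-abelian group with logspace normal form wreathed with $\Z$ will serve. One could also, in the spirit of the paragraph preceding the corollary, feed Waack's example into the wreath product machinery once it is known to have a logspace normal form, but the $F_2 \wr \Z$ route is the most economical.
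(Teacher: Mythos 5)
Your argument is correct, but it takes a different route from the paper. The paper simply cites Robinson (Corollary 15.1.5) for the fact that $(\Z \wr \Z)\wr\Z$ is not linear and then applies Theorem \ref{closureWreath}; note the iterated wreath product is needed there because $\Z\wr\Z$ itself \emph{is} linear, so the inner factor must be non-abelian before the non-linearity criterion for wreath products kicks in. You instead take $F_2\wr\Z$ (non-abelian inner factor from the start, with a logspace normal form by Proposition \ref{prop:freegp}) and deduce non-linearity indirectly: Gruenberg's theorem shows $F_2\wr\Z$ is not residually finite, and Mal'cev's theorem (finitely generated linear groups are residually finite) then rules out linearity --- and $F_2\wr\Z$ is indeed finitely generated, so Mal'cev applies. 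Both routes are sound. What your route buys is that it simultaneously produces a non-residually-finite example, which the paper handles in a separate corollary via an elementary commutator computation in $S_3\wr\Z$; what the paper's route buys is economy of citation, needing only one black-box result about linearity of wreath products rather than two classical theorems. Your closing remark that ``any non-abelian group with logspace normal form wreathed with $\Z$ will serve'' is accurate for the residual-finiteness argument, provided the base group is residually finite so that Gruenberg's criterion is the obstruction (for a non-residually-finite base one would instead note that residual finiteness, and linearity, pass to subgroups).
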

\begin{proof}
By Corollary 15.1.5 in \cite{\Robinson}, $(\Z \wr \Z) \wr \Z$ is not
linear, but it has a logspace normal form by Theorem
\ref{closureWreath}.
\end{proof}

\begin{cor}
Not all groups with logspace normal forms are residually finite.
\end{cor}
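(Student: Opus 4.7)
The plan is to reuse the group $(\Z \wr \Z) \wr \Z$ that already appeared in the preceding corollary, since by Theorem \ref{closureWreath} it has a logspace normal form; what remains is to show this group is not residually finite. I would appeal to the classical characterization (due to Gruenberg; see, for example, \cite{\Robinson}) that for non-trivial groups $A$ and $B$, the restricted wreath product $A \wr B$ is residually finite if and only if $A$ is residually finite and either $A$ is abelian or $B$ is finite.

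Taking $A = \Z \wr \Z$ and $B = \Z$, the only point requiring verification is that $\Z \wr \Z$ is non-abelian. This is a brief check: if $a$ generates the base $\Z$ placed at position $0$ and $t$ generates the top $\Z$ of $\Z \wr \Z$, then $tat^{-1}$ puts the non-trivial base generator at position $1$ rather than at position $0$, so $a$ and $t$ do not commute. Since $B = \Z$ is infinite and $A = \Z \wr \Z$ is non-abelian, the criterion fails for $(\Z \wr \Z) \wr \Z$, which is therefore not residually finite.

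There is no serious obstacle here: the logspace half is already in hand from Theorem \ref{closureWreath}, and the non-residual-finiteness half is a direct application of a standard classification of residually finite wreath products. A slightly simpler alternative would be $S_3 \wr \Z$, which has a logspace normal form (any finite group trivially does, and Theorem \ref{closureWreath} applies) and fails residual finiteness for the same reason, $S_3$ being non-abelian and $\Z$ infinite; the example $(\Z \wr \Z) \wr \Z$ is preferable only because it reuses the group from the previous corollary.
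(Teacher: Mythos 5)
Your proposal is correct, but it takes a different route from the paper. The paper's own proof uses the example $S_3 \wr \Z$ and argues non-residual-finiteness by hand: given any homomorphism $\theta$ to a finite group, choose $n$ with $t^n\theta = 1$; since $S_3$ and its conjugate $S_3^{t^n}$ commute inside the wreath product, $[S_3,S_3]\theta = [S_3,S_3^{t^n}]\theta = 1$, so the non-trivial subgroup $[S_3,S_3]$ is killed by every finite quotient. You instead take $(\Z\wr\Z)\wr\Z$ (reusing the group from the preceding corollary, with $S_3\wr\Z$ noted as an alternative) and invoke Gruenberg's classification of residually finite wreath products, checking only that $\Z\wr\Z$ is non-abelian. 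Both arguments are valid: yours is shorter modulo the cited classification and economizes on examples, while the paper's is completely self-contained and in effect reproves the relevant direction of Gruenberg's theorem in the special case at hand. One small caution: the full Gruenberg criterion also requires $B$ to be residually finite in the ``if'' direction; you only use the ``only if'' direction (non-abelian top of the base together with infinite $B$ forces failure of residual finiteness), so your application is unaffected, but the statement as you quoted it is slightly incomplete.
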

\begin{proof}
Let $G$ be the wreath product of the symmetric group $S_3$ on three letters
and $\Z = \langle t \rangle$. By Theorem \ref{closureWreath}, $G$ has logspace normal form.
But it is easy to show that $G$ is not residually finite.
Let $\theta$ be a homomorphism from $G$ to a finite group.
We will show that $\theta$ kills the commutator subgroup $[S_3, S_3]$.
Let $n$ be a positive integer such that $t^n \theta = 1$.
Since $[S_3, S_3^{t^n}] = 1$,
$$[S_3, S_3]\theta = [S_3 \theta, (S_3^{t^n}) \theta] = [S_3, S_3^{t^n}] \theta = 1.$$
\end{proof}

\section{Closure under free products}\label{sec:freeprod}
Unfortunately we are not able to prove closure of logspace normal
forms under free product in general, but we are able to do so  if the free product has
logspace word problem, for example if it is a free product of linear
groups. 

Let $G = \langle X \rangle$ and $H = \langle Y \rangle$ be groups with
logspace normal form functions $g_X$ and $h_Y$, and suppose $X$ and $Y$ are disjoint.  By Proposition \ref{prop:emptyWord} we can assume that $g_X$ and $h_Y$ both
have the property that the normal form for a word representing the
identity is $\lambda$.
We will define a normal form function for the free product $G \ast H$, which is generated by $X \sqcup
Y$.

We start with the following lemma.
\begin{lem}
\label{inG}
Let $w = u_1 v_1 u_2 v_2 \dots u_k v_k$ where $u_i \in X^*$ and $v_i
\in Y^*$.  Then $w$ represents an element in $G$ if and only if $w
u_k^{-1} u_{k-1}^{-1} \dots u_1^{-1}=_{G\ast H} 1.$
\end{lem}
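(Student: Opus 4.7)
My proof plan is to exploit the standard retraction homomorphism $\rho: G\ast H \to G$ that fixes $G$ pointwise and sends every element of $H$ to the identity. This is well defined by the universal property of the free product (or equivalently, by checking that sending $X$-generators to themselves and $Y$-generators to $1$ respects all defining relations of $G\ast H$). Both directions of the ``if and only if'' will then be quick algebraic manipulations once we have $\rho$ in hand.

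For the forward direction, assume $\overline{w}\in G$ inside $G\ast H$. Since $\rho$ is the identity on $G$, we have $\rho(\overline{w}) = \overline{w}$. On the other hand, computing $\rho$ directly from the word $w = u_1 v_1 u_2 v_2 \cdots u_k v_k$, every letter of each $v_i$ gets killed, so $\rho(\overline{w}) = \overline{u_1 u_2 \cdots u_k}$. Combining, $\overline{w} = \overline{u_1 u_2 \cdots u_k}$ in $G\ast H$, and multiplying both sides on the right by $\overline{u_k^{-1} u_{k-1}^{-1} \cdots u_1^{-1}}$ yields $\overline{w u_k^{-1} u_{k-1}^{-1}\cdots u_1^{-1}} = 1$.

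For the reverse direction, suppose $\overline{w u_k^{-1} u_{k-1}^{-1} \cdots u_1^{-1}} = 1$ in $G\ast H$. Then multiplying on the right by $\overline{u_1 u_2 \cdots u_k}$ immediately gives $\overline{w} = \overline{u_1 u_2 \cdots u_k}$, and this is clearly an element of $G$ since each $u_i\in X^\ast$.

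There is no real obstacle here, but if I had to name one, it would be making sure the retraction $\rho$ is invoked cleanly: one should confirm at the outset that it really is a homomorphism (rather than appealing vaguely to a normal form theorem for free products), after which the rest is a one-line computation in each direction.
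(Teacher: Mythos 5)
Your proof is correct, but it takes a genuinely different route from the paper. You build the retraction $\rho\colon G\ast H\to G$ (identity on $G$, trivial on $H$), observe that $\rho(\overline{w})=\overline{u_1u_2\cdots u_k}$ while $\rho$ fixes any element of (the canonical copy of) $G$, and both directions then fall out in one line; the only point to state carefully is that $\rho\circ\iota_G=\mathrm{id}_G$, which justifies writing $\rho(\overline{w})=\overline{w}$ when $\overline{w}$ lies in the image of $G$. The paper instead argues by induction on $k$ via the Normal Form Theorem for free products: if $\overline{w}\in G$ then its reduced form is a single $G$-syllable, so some intermediate syllable $\overline{u_i}$ or $\overline{v_i}$ must be trivial; merging the adjacent syllables and applying the inductive hypothesis (and noting the omitted $u_i^{-1}$ is trivial) gives the claim, the base case using disjointness of $X$ and $Y$. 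Your approach buys simplicity and avoids the normal form theorem and the syllable bookkeeping entirely, relying only on the universal property of the free product, and it yields the sharper statement $\overline{w}=\overline{u_1\cdots u_k}$ directly; the paper's inductive argument has the mild advantage of staying within the normal-form-theorem toolkit that it must invoke anyway in the subsequent well-definedness proposition for $f_{X\sqcup Y}$.
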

\begin{proof}
We proceed by induction on $k$. For $k=1$ we have $u_1v_1\in G$
meaning $v_1\in G$, but since $v_1\in Y^*$ with $Y$ disjoint from $X$,
we must have $v_1=1$, so $u_1v_1u_1^{-1}=1$.

Assume the result is true for $k$, and let $w=u_1v_1\dots
u_{k+1}v_{k+1}$ represent an element in $G$. By the Normal Form
Theorem for free products (\cite{\LS}, p. 175), $w$ has a unique reduced
form consisting of a single subword $u\in X^*$, so we must have
$u_i=1$ for some $i>1$ or $v_i=1$ for some $i<k+1$. If $u_i=1$ then $$w=u_1v_1\dots
u_{i-1}(v_{i-1}v_i)u_{i+1}\dots v_{k+1}$$ and by the induction hypothesis
$$wu_{k+1}^{-1}u_k^{-1} \dots u_{i+1}^{-1}u_{i-1}^{-1}\dots u_1^{-1}=1.$$ Similarly if
$v_i=1$.

The converse is clearly true.
\end{proof}

We define a normal form for $G\ast H$ recursively as follows.
\begin{defn}[Normal form for $G\ast H$]
  Let $w \in (X \sqcup Y)^*$.
\begin{enumerate}
\item write $w$ as  a freely reduced word.
\item if $\overline w\in G$, then define  $f_{X\sqcup Y}(w)=g_X(X(w))$.
\item if $\overline w\in H$, then define  $f_{X\sqcup Y}(w)=h_Y(Y(w))$.
\item otherwise, let $w_1$ be the longest initial segment of $w$ such that $\overline{w_1} \in G$,
and let $w'$ be the tail of $w$, so $w = w_1 w'$.
Then  \begin{itemize}
\item if $w_1$ has nonzero length, define
$$f_{X \sqcup Y}(w) = g_X(X(w_1)) f_{X \sqcup Y}(w').$$
\item otherwise, let $w_2$ be the longest  initial segment of $w$ such that $\overline{w_2} \in H$,
and let $w'$ be the tail of $w$, so $w = w_2 w'$.
Note that if $w_2=\lambda$ then the first case applies, so $w_2$ has nonzero length.

In this case, define
$$f_{X \sqcup Y}(w) = h_Y(Y(w_2)) f_{X \sqcup Y}(w').$$
\end{itemize}\end{enumerate}
\end{defn}

\begin{prop}
The normal form function $f_{X \sqcup Y}$ is well defined.
\end{prop}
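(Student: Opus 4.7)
The plan is to verify well-definedness by strong induction on the length of the freely reduced representative of $w$, where the only substantive point is reconciling the potential overlap between clauses (2) and (3). First, step (1) is unambiguous because free reduction in $(X \sqcup Y)^*$ is confluent and terminating (a standard rewriting argument, see \cite{\LS}); so without loss we may take $w$ itself to be freely reduced.

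The overlap among clauses (2)--(4) occurs exactly when $\overline{w}$ lies in both $G$ and $H$. By the Normal Form Theorem for free products the natural embeddings of $G$ and $H$ in $G \ast H$ have trivial intersection, so in that situation $\overline{w} = 1$. Applying the retraction $\pi_G \colon G \ast H \to G$ (identity on $G$, killing $H$) to $\overline{w}$ yields $\overline{X(w)}$, whence $\overline{X(w)} = 1$ in $G$, and symmetrically $\overline{Y(w)} = 1$ in $H$. Since the standing assumption gives $g_X(\lambda) = \lambda$ and $h_Y(\lambda) = \lambda$ (Proposition \ref{prop:emptyWord}), both clauses (2) and (3) output $\lambda$, so they agree.

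For clause (4), among the finitely many prefixes of $w$ the longest one whose image lies in $G$ exists and is unique, so $w_1$ is well-defined, and similarly for $w_2$. If $w_1 \neq \lambda$ then the recursive call is on the strictly shorter $w'$. Otherwise the first letter of $w$ cannot lie in $X$, since any single $X$-letter represents an element of $G$ and would force $|w_1| \geq 1$; hence that first letter lies in $Y$, giving $|w_2| \geq 1$ and justifying the parenthetical remark in the definition. In either sub-case $|w'| < |w|$, so the inductive hypothesis supplies a well-defined $f_{X \sqcup Y}(w')$ and therefore a well-defined $f_{X \sqcup Y}(w)$. The only genuine obstacle was the consistency of clauses (2) and (3), which we resolved via the projections to the factors; the rest is a routine length induction.
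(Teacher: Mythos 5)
There is a genuine gap: you have proved only the easy half of the statement. Your argument checks that the recursion assigns an unambiguous string to each \emph{word} $w$ — that clauses (2) and (3) agree on their overlap, that the longest prefixes $w_1,w_2$ exist, and that the recursion terminates. But in this paper a ``normal form function'' must produce a unique representative per \emph{group element}: what has to be shown is that the output depends only on $\overline{w}$, i.e.\ $f_{X\sqcup Y}(u)=f_{X\sqcup Y}(v)$ if and only if $\overline{u}=\overline{v}$ (this is what Proposition \ref{prop:freeClosure} silently relies on, and it is the entire content of the paper's own proof). A priori, two different freely reduced words representing the same element of $G\ast H$ could be cut up differently by the greedy ``longest prefix in $G$ (resp.\ $H$)'' rule and so produce different outputs; nothing in your proof rules this out, and dismissing it as ``routine length induction'' hides exactly the step that needs the structure theory of free products.

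The missing argument is the one the paper gives: write the nontrivial element $\overline{w}\notin G\cup H$ uniquely as an alternating product $u_1v_1\cdots u_kv_k$ of nontrivial syllables (Normal Form Theorem for free products), and show that for \emph{any} freely reduced word representing $\overline{w}$, the longest initial segment whose image lies in $G$ evaluates precisely to $u_1$ (and symmetrically for $H$). This is proved by considering $v_k^{-1}u_k^{-1}\cdots v_1^{-1}u_1^{-1}w=1$ and applying the normal form theorem to force the cancellation pattern. With that claim, an induction shows the output is the concatenation of the $g_X$/$h_Y$-normal forms of the syllables of the unique decomposition, hence independent of the representing word and injective on elements. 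Two smaller omissions in the same vein: in case (2) with $\overline{w}\in G$ nontrivial you still need $\overline{X(w)}=\overline{w}$ (Lemma \ref{inG}, or your retraction argument applied generally — you invoked it only when $\overline{w}=1$), and the handling of the identity needs $g_X$, $h_Y$ to send \emph{every} word representing $1$ (not just $\lambda$) to $\lambda$, which is what Proposition \ref{prop:emptyWord} provides. Your clause-(4) bookkeeping is fine, but it is not where the proposition lives.
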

\begin{proof}
In case (1), by the previous lemma we have $w=_{G\ast H}X(w)$ (the word obtained from $w$ by deleting all letters from $Y$). So the normal form function $g_X$ applies and gives a unique representative for $w$. Similarly for case (2). So words that lie completely in one of the factors have a well-defined normal form.

Now let   $w\in (X\sqcup Y)^*$ be freely reduced, and assume $\overline{w}\not\in G$ and  $\overline{w}\not\in H$.  Then $\overline{w}$ is
non-trivial (since it is not in $G$ or $H$). 
For each non-identity element of $G \ast H$, there is a unique way to represent it as
an alternating product of non-identity elements in $G$ and $H$ \cite{\LS}.
So write $w =_{G\ast H} u_1 v_1 u_2 v_2 \dots u_k v_k$ where $u_i \in X^*$ and $v_i
\in X^*$. Since $\overline{w}$ does not lie in $G$ or $H$, it has at least two factors.

If the alternating product starts with $u_1\in X^*$, then we claim that any word representing $\overline w$ has a longest initial segment that evaluates to an element of $G$, and this element is equal to $u_1$. If so, then the choice made by $f_{X\sqcup Y}$ is unique.
Take the (freely reduced) word $w$ and write it as $a_1b_1\dots a_lb_l$ with $a_i\in X^*$ and $b_i\in Y^*$ with only $a_1, b_l$ allowed to be empty words.
Then $v_k^{-1}u_k^{-1} \dots v_1^{-1} u_1^{-1} a_1b_1\dots a_lb_l =_{G\ast H} 1$, so by the normal form theorem  \cite{\LS} some term must represent the identity in $G$ or $H$, and this term must involve $u_1$. So $w$ has a prefix which is equal to $u_1$, and since
$\overline w\not\in G$, $v_1$ is not empty, so there is a longest prefix of $w$ that equals $u_1$, and cancels so that $v_1$ can then cancel.

A similar argument applies if the alternating product starts with $v_1u_2$.
\end{proof}

In Proposition \ref{prop:freegp} we proved that free groups of finite rank have logspace normal form, using the fact that they have  logspace word problem. We generalise this argument to show that the function $f_{X\sqcup Y}$ can be computed in logspace, provided the word problem for $G\ast H$ can be decided in logspace.

\begin{prop}
\label{prop:freeClosure}
Let $G = \langle X \rangle$ and $H = \langle Y \rangle$ be groups with
logspace normal forms.  Suppose furthermore that $G \ast H$ has
logspace decidable word problem.  Then $G \ast H$ has logspace normal
form.
\end{prop}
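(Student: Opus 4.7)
The plan is to implement the recursive definition of $f_{X\sqcup Y}$ as a single left-to-right sweep, using the assumed logspace word problem for $G\ast H$ to locate each successive factor. Maintain a binary counter $\ell$, initially $0$, recording how much of $w$ has been processed. While $\ell < |w|$, peel off the longest prefix $w[\ell+1..r]$ of the remaining input that represents an element of $G$; if the empty prefix is the longest such, do the same with $H$ in place of $G$. Write $g_X(X(w[\ell+1..r]))$ or $h_Y(Y(w[\ell+1..r]))$ respectively to the output tape, set $\ell := r$, and continue until $\ell = |w|$.

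To find the largest valid $r$, scan $r$ from $|w|$ down to $\ell+1$ and test each candidate. By Lemma~\ref{inG}, $\overline{w[\ell+1..r]}$ lies in $G$ if and only if $w[\ell+1..r]\cdot (X(w[\ell+1..r]))^{-1}$ is trivial in $G\ast H$. The test word is a logspace-computable function of $(w,\ell,r)$: scan $w$ from position $\ell+1$ to $r$ echoing every letter, then scan back from $r$ to $\ell+1$ echoing the formal inverse of each $X$-letter encountered (skipping letters in $Y$). Composing this generator with the logspace word-problem solver for $G\ast H$ via Lemma~\ref{lem:comp} gives a logspace test, and the first $r$ to pass is the desired endpoint. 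Writing $g_X(X(w[\ell+1..r]))$ to the output is similarly logspace: the extractor producing $X(w[\ell+1..r])$ from $(w,\ell,r)$ is a straightforward filter, and its composition with $g_X$ is logspace by Lemma~\ref{lem:comp} again. The $H$-case is symmetric.

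Correctness follows by induction on $|w|-\ell$: each iteration writes exactly the leading block $g_X(X(w_1))$ or $h_Y(Y(w_2))$ prescribed by the recursive definition, and the residual word is simply the suffix starting at position $r+1$; the cases where $\overline w$ already lies in $G$ or $H$ correspond to the first iteration consuming all of $w$. The counters $\ell$ and $r$ take $O(\log|w|)$ bits, and the logspace subroutines reuse the remainder of the work tape and are reinvoked from scratch on each iteration, so the total space usage stays logarithmic.

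The principal obstacle is that none of the intermediate objects---the candidate subword $w[\ell+1..r]$, its $X$-projection, or the test word fed to the word-problem solver---can be stored, since each may have length linear in $|w|$. This is exactly the setting for Lemma~\ref{lem:comp}: we simulate the outer consumer (the word-problem solver, or $g_X$) while regenerating the required letters from $(w,\ell,r)$ on demand, so no intermediate string is ever committed to the work tape.
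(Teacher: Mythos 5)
Your algorithm is, in substance, the paper's: locate successive maximal $G$- and $H$-blocks greedily, testing membership via Lemma~\ref{inG} together with the assumed logspace word problem for $G\ast H$, emit $g_X$ or $h_Y$ of each block, and avoid storing intermediate words by regenerating the needed letters on demand and composing as in Lemma~\ref{lem:comp}. The space accounting (two pointers plus re-invoked logspace subroutines) matches the paper's ``constant number of pointers'' argument, and your use of Lemma~\ref{inG} (appending the reversed formal inverses of the $X$-letters) is the same membership test the paper uses.

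There is, however, a gap in your correctness paragraph. The normal form $f_{X\sqcup Y}$ is defined by \emph{first replacing $w$ by its free reduction}, and the well-definedness proposition (that the longest initial segment evaluating into $G$ is exactly the first alternating factor $u_1$) is proved in the paper for freely reduced words. Your sweep runs on the raw input, and you assert that each iteration writes ``exactly the leading block prescribed by the recursive definition''; that is not immediate, because the greedy blocks of an unreduced word need not correspond to blocks of its free reduction. For instance, for $w = x\,y\,y^{-1}\,x^{-1}\,y'$ the greedy step first consumes $x\,y\,y^{-1}\,x^{-1}$ (emitting $\lambda$), a block with no counterpart in the reduced word $y'$; one must argue that such discrepancies never change the emitted output, i.e.\ that for an \emph{arbitrary} word the greedy blocks still evaluate to the canonical alternating factors of $\overline w$, so that the computed function depends only on $\overline w$. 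That statement is true, but it requires an argument of the same kind as the well-definedness proof (if the longest $G$-prefix had a nontrivial value not equal to $u_1$, some later nonempty prefix of the remaining suffix would have to evaluate into $G$, contradicting maximality), and you do not supply it. The simplest repair is the paper's: pre-compose with the logspace free-reduction function of Proposition~\ref{prop:freegp}, after which the well-definedness proposition applies verbatim; alternatively, prove the class-constancy claim for unreduced words directly. With either addition your proof is complete; as written, the assertion that the output is a normal form (rather than merely a word representing $\overline w$) is unsupported.
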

\begin{proof}
Let $w \in (X \sqcup Y)^*$ be the freely reduced word equal to the input word (run the logspace algorithm in Proposition \ref{prop:freegp} on the input word to obtain it).
By Lemma \ref{inG} we can compute $j$ such that $w_1 = u_1 v_1 u_2 v_2
\dots v_{j-1} u_j$ is the longest initial segment of $w$ such that
$\overline{w_1} \in G$, by inputting $u_1 v_1 u_2 v_2
\dots v_{j-1} u_{j-1}^{-1}\dots u_1^{-1}$ into the logspace word problem function for $G\ast H$.

  Output the normal form $g_X$ for $u_1 u_2
\dots u_j$ and move the input pointer to point to $v_j$.  For ease of
notation, we rename $v_j u_{j+1} v_{j+1} \dots u_k v_k$ to be our new
$w$, and reindex so that $w = v_1 u_1 v_2 u_2 \dots v_r u_r$.
Compute $j$ such that $w_1 = v_1 u_1 v_2 u_2 \dots u_{j-1} v_j$ is
the largest initial segment of our new $w$ such that $\overline{w_1}
\in H$.  Output the normal form over $Y$ for $v_1 u_1 v_2 u_2 \dots
u_{j-1} v_j$ and move the input pointer to point to $u_j$.  Continue
in this way until the entire input has been processed.  Note that at
any one stage we are only storing a constant number of pointers to the
input.
\end{proof}

\begin{cor}
\label{cor:freeProdLinear}
Let $F$ be a field, and let $G$ and $H$ be linear over $F$ with a
logspace normal form.  Then $G \ast H$ is also linear over $F$ and it
also has a logspace normal form.
\end{cor}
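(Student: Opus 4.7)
The proof breaks naturally into two independent pieces: linearity of $G \ast H$ over $F$, and the logspace normal form claim. I would handle them in that order and then combine.

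First I would invoke the classical theorem, originally due to Nisnevich and recorded in Wehrfritz's treatment of infinite linear groups, that the free product of two groups linear over a common field $F$ is again linear over $F$. I would simply cite this result (see \cite{\Wehrfritz}) rather than reproving it, since a full matrix-representation construction would take us well outside the scope of the paper. This gives the first assertion $G \ast H \le GL_n(F)$ for some $n$.

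Second, once we know $G \ast H$ is linear over $F$, I would apply Simon's extension of the Lipton--Zalcstein theorem (\cite{\Simon}) to conclude that $G \ast H$ has word problem decidable in logspace. Note that we really do need Simon's result, rather than only Lipton--Zalcstein, since $F$ is permitted to have positive characteristic.

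Third, with $G$ and $H$ assumed to have logspace normal forms and $G \ast H$ now known to have logspace-decidable word problem, the hypotheses of Proposition~\ref{prop:freeClosure} are satisfied, and it delivers a logspace normal form for $G \ast H$ directly.

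The only nontrivial ingredient is the linearity step; everything else is assembly of results already available in the paper or in the cited literature. The main obstacle, if one wanted a self-contained treatment, would be constructing explicit faithful representations of $G \ast H$ from representations of $G$ and $H$ (for instance by ping-pong applied to a suitable pair of matrix representations), but for the present corollary it is entirely reasonable to import the linearity fact as a black box.
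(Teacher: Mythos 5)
Your proposal is correct and follows essentially the same route as the paper: cite closure of $F$-linearity under free products from \cite{\Wehrfritz}, use Lipton--Zalcstein/Simon for the logspace word problem of the linear group $G \ast H$, and conclude via Proposition \ref{prop:freeClosure}. The only difference is presentational: you spell out the appeal to Proposition \ref{prop:freeClosure}, which the paper leaves implicit in ``the corollary follows.''
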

\begin{proof}
The class of groups which are linear over $F$ is closed under free
products (see, for example, Corollary 2.14 in \cite{\Wehrfritz}).  All
linear groups groups have logspace word problem (for the case when the
characteristic of $F$ is $0$, see \cite{\LiptonZ}; for the positive
characteristic case see \cite{\Simon}).  The corollary follows.
\end{proof}

\section{Closure under infinite extensions}
\label{sec:infiniteextensions}

In this section we prove that certain infinite extensions
of groups with logspace normal forms also have logspace normal form.
 If $N$ is a normal subgroup of finitely generated group $G = \langle X \rangle$,
if $G/N$ has logspace normal form, and if there is a logspace computable
function to produce normal forms for elements of $N$ in terms of generators
in $X$, then $G$ has logspace normal form.
This will enable us to extend our class to include certain amalgamated products
and one-relator groups.

Notice that for the following lemma, $N$ need not be finitely generated,
and we posit the existence of  a function similar to a normal form function for $N$
in the sense that it produces unique representatives for the elements of $N$,
but different in the sense that it is defined on words over the generators for
the ambient group $G$.

\begin{lem}
\label{lem:someExt}
Let $G$ be a group with normal subgroup $N$.
Let $X$ be a finite symmetric generating set for $G$.
Let $S=\{w\in X^* \ | \ \overline{w} \in N\}$.  
Suppose that
\begin{itemize}
\item $G/N$ has logspace normal form; and
\item there is a logspace computable function $f: S \rightarrow S$
such that $\overline{f(w)}= \overline{w}$ and $f(w_1) = f(w_2)$ if and only if $\overline{w_1} = \overline{w_2}$.
\end{itemize}
Then $G$ has logspace normal form.
\end{lem}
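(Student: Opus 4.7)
The plan is to combine the normal form for $G/N$ with the function $f$ to produce a normal form for $G$, using the virtual-composition idea of Lemma \ref{lem:comp}. By Proposition \ref{prop:indGenSet}, $G/N$ has a logspace normal form function with respect to (the image of) $X$, which we may view as a logspace function $\bar g : X^* \to X^*$ such that $\bar g(w)$ represents the coset $\overline{w}N$ canonically. For an input word $w \in X^*$, set $u = \bar g(w)$, and define
$$h(w) = f(w u^{-1}) \cdot u.$$
Note that $\overline{w u^{-1}} \in N$, so $w u^{-1} \in S$ and the expression $f(w u^{-1})$ is well-defined.

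I would first verify that $h$ is a normal form function for $G$. By construction $\overline{h(w)} = \overline{w}$, so every element of $G$ arises as $\overline{h(w)}$ for some $w$. For uniqueness of representatives, suppose $\overline{w_1} = \overline{w_2}$ in $G$. Then $\overline{w_1}$ and $\overline{w_2}$ lie in the same coset of $N$, so $\bar g(w_1) = \bar g(w_2)$ (call this common value $u$), since $\bar g$ is a normal form function for $G/N$. Now $\overline{w_1 u^{-1}} = \overline{w_2 u^{-1}}$ and both lie in $N$, so by the hypothesis on $f$ we get $f(w_1 u^{-1}) = f(w_2 u^{-1})$, hence $h(w_1) = h(w_2)$.

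Next I would check that $h$ is computable in logspace. The strategy is to simulate $f$ on the virtual input $w u^{-1}$, writing its output to the output tape, and then append $u$. The only technical point is that the string $w u^{-1}$ is not physically on the input tape, so each time the simulation of $f$ requests its $i$th input letter, we must supply it on the fly. To do this, precompute $n = |w|$ in binary by one pass over the input; if $i \le n$, read the $i$th letter directly from the input tape; if $i > n$, the required letter is the formal inverse of the $(|u| - (i - n) + 1)$th letter of $u$, obtained by running $\bar g$ on $w$ and using a binary counter to pick out the correct output letter (the length $|u|$ is itself logspace computable by running $\bar g$ and counting its outputs without writing them). This is precisely the combined use of Lemma \ref{lem:comp} and Lemma \ref{lem:inv}. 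Once $f$'s simulation halts, run $\bar g$ on $w$ one final time, copying its output directly to the output tape to form the suffix $u$.

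The main obstacle will be administrative: juggling the several nested logspace subroutines (length counters for $w$ and $u$, individual-letter retrievals from $\bar g$, and the simulation of $f$ on a virtual input) within the logarithmic-space budget. All the needed techniques are already codified in Lemmas \ref{lem:comp} and \ref{lem:inv}, so I expect the proof to go through cleanly.
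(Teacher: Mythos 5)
Your proposal is correct and takes essentially the same approach as the paper: transfer the quotient's logspace normal form to the image of $X$, decompose each word into a canonical coset word times an element of $N$, apply $f$ to the $N$-part via a virtual input assembled letter-by-letter in the style of Lemmas \ref{lem:comp} and \ref{lem:inv}, and concatenate. The only (cosmetic) difference is the order of the factors: the paper outputs the coset word $b$ first and then simulates $f$ on $b^{-1}w$, whereas you output $f(wu^{-1})$ first and append $u$.
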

\begin{proof}
Let $X_N=\{xN \ | \ x\in X\}$ be a generating set for $G/N$,  and let $h:(X_N)^*\rightarrow (X_N)^*$ be the logspace normal form function for $G/N$ with respect to this generating set. 
Define two logspace functions $p:X^*\rightarrow (X_N)^*$ and $q:(X_N)^*\rightarrow X^*$ by $p(x)=xN$ for each $x\in X$, and $q(xN)=x$ for each $xN\in X_N$.
Let $\iota:X^*\rightarrow X^*$ be the logspace function  $\iota(a_1\dots a_n)=a_n^{-1}\dots a_1^{-1}$ that computes the inverse of a word.


Define a function $g:X^*\rightarrow X^*$ as follows:
 On input 
 $w\in X^*$:
 \begin{enumerate}
 \item compute $q(h(p(w)))$ writing the output word $b_1\dots b_k$ to the output tape, storing the integer $k$
 \item call the function $f$, and when it asks for the $i$th input letter:
 \begin{itemize}
  \item if $i\leq k$, call $\iota(q(h(p(w))))$ and return the $i$th letter of its output;
  \item if $i>k$, return the $(i-k)$th letter of $w$.
\end{itemize}\end{enumerate}
 
 Since $w\in wN$, step (1) of the algorithm computes $b=b_1\dots b_k$ such that $wN=(b_1N)\dots (b_kN)$ and returns the letters $b_1,\dots, b_k$. Then $w=bn$ for some $n\in N$, and since $n=b^{-1}w$, step (2) of the algorithm outputs $f(b^{-1}w)$.
\end{proof}

We first explore some implications of Lemma \ref{lem:someExt} for amalgamated products. We are grateful to Chuck Miller for his invaluable input into the remainder of this section.

\begin{cor}
Suppose that $G$ has logspace normal form,
and that $N$ is a normal subgroup such that $G/N$ is linear and has logspace normal form.
Then the amalgamated product $H$ of $G$ with itself along $N$
has logspace normal form.
\end{cor}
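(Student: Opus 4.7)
My plan is to apply Lemma~\ref{lem:someExt} to $H = G \ast_N G$ with the subgroup $N$, so I must verify its two hypotheses.

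Since $N$ is normal in each factor $G_i = G$, it is normal in $H$: any generator of $H$ lies in some $G_i$, and normality in $G_i$ handles conjugation. Moreover, killing $N$ in $H$ collapses the amalgamating relations $n_1 = n_2$ to trivialities, giving $H/N \cong (G/N) \ast (G/N)$. Because $G/N$ is linear and has logspace normal form, Corollary~\ref{cor:freeProdLinear} ensures that $H/N$ is also linear and has logspace normal form, verifying the first hypothesis.

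For the second hypothesis, take the generating set $X_H = X_1 \sqcup X_2$ for $H$, where $X_i$ is a disjoint copy of a finite symmetric generating set $X$ for $G$. The essential tool is the ``folding'' homomorphism $\psi : H \to G$: the two identity maps $G_i \to G$ agree on $N$ (since $N$ is amalgamated), so by the universal property of amalgamated free products they extend to a well-defined surjection $\psi$, and this $\psi$ restricts to the identity on $N \leq H$. At the level of words, $\psi$ is simply the logspace-computable relabeling sending each letter of $X_1$ or $X_2$ to its copy in $X$. Writing $g_X$ for the logspace normal form function of $G$ and $\iota_1 : X^* \to X_1^* \subseteq X_H^*$ for the trivial relabeling into the ``first copy'', I define
$$f(w) \;:=\; \iota_1\bigl(g_X(\psi(w))\bigr).$$
For $w \in S = \{ w \in X_H^* : \overline{w} \in N \}$, the word $\psi(w) \in X^*$ represents $\psi(\overline{w}) = \overline{w}$ viewed in $N \leq G$, so $f(w) \in S$ and represents the same element of $H$ as $w$. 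Since $g_X$ is a normal form function and $\psi|_N = \mathrm{id}_N$, we obtain $f(w_1) = f(w_2) \iff \overline{w_1} = \overline{w_2}$ in $H$. By Lemma~\ref{lem:comp}, $f$ is logspace computable.

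Applying Lemma~\ref{lem:someExt} then completes the proof. The only subtle step is the construction of $\psi$ and the verification that it restricts to the identity on $N$; once that is in hand, the remainder is a routine composition of previously established logspace operations.
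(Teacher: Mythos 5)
Your proof is correct, and its skeleton coincides with the paper's: both apply Lemma~\ref{lem:someExt}, identify $H/N$ with $(G/N)\ast(G/N)$ and invoke Corollary~\ref{cor:freeProdLinear} for its logspace normal form, and use for $f$ exactly the same function, namely relabel every letter into the first copy of $X$ and then apply the normal form function of $G$. Where you genuinely diverge is in justifying the crucial identity $\overline{f(w)}=\overline{w}$ for $w\in S$: the paper proves that relabeling a word representing an element of $N$ does not change the element it represents by an iterative combinatorial argument, writing $wu^{-1}=1$ as an alternating product and repeatedly invoking the Normal Form Theorem for amalgamated free products to locate and rewrite syllables lying in $N$; you instead observe that the two identity maps $G\to G$ agree on $N$, so the universal property yields a folding homomorphism $\psi:H\to G$ with $\psi\circ j_1=\mathrm{id}_G$, and since every element of $N\le H$ lies in $j_1(G)$, the composite $j_1\circ\psi$ fixes $N$ pointwise, which is precisely the needed statement (with the harmless abuse that the equality $\psi(\overline{w})=\overline{w}$ is via the canonical identification of $N\le H$ with $N\le G$). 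Your retraction argument is shorter and more conceptual, avoids the induction on the number of alternating syllables entirely, and in fact shows the stronger fact that relabeling preserves every element of $G_1$, not only of $N$; the paper's argument, by contrast, stays entirely at the level of words and the amalgamated-product normal form machinery it has already set up. Your additional explicit checks (normality of $N$ in $H$, and that $f$ maps $S$ into $S$) are welcome and correct, and logspace computability of $f$ follows from Lemma~\ref{lem:comp} exactly as in the paper.
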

\begin{proof}
Fix a finite generating set $X$ for $G$, and define two new copies $X_1$ and $X_2$ of $X$ as follows:
for each element $x_j\in X$  make two new generators $(x_j)_1$ and $(x_j)_2$, and for $i=1,2$, 
let $X_i = \{(x_j)_i \; | \; x_j \in X \}$.
Let $G_1$ and $G_2$ be two copies of $G$ with generators $X_1$ and $X_2$ respectively,
and let $N_1$ and $N_2$ be corresponding normal subgroups of $G_1$ and $G_2$.


We have $H/N
 = G_1/N_1 \ast G_2/N_2$ where $G_i/N_i$ is linear and has logspace normal form. Hence by Corollary \ref{cor:freeProdLinear}, $H/N$
has logspace normal form, with respect to the generating set $X_1\sqcup X_2$.

Let $S$ be the set of words $w$ over $X_1 \sqcup X_2$ such that $\overline{w} \in N$. 
Write $w\in S$ as an alternating product of subwords from $X_1$ and $X_2$.
 Since $\overline{w} \in N$, $w$ is equal in $H$ to a 
 word $u\in X_1^*$ with $\overline{u}\in N_1$. Then $wu^{-1}=1$ in $H$.
 Write $wu^{-1}$ as an alternating product $u_1v_1\dots u_kv_k$ with $u_i\in X_1^*, v_i\in X_2^*$ (with all subwords nonempty except possibly $u_1$ and $v_k$).
By the normal form theorem for amalgamated free products  \cite{\LS}, the alternating word  contains a subword  $u_i\in X_1^*$  with  $\overline{u_i} \in N$, or  
$v_i\in X_2^*$  with  $\overline{v_i} \in N$. In the first case write $u_i$ as a word in $X_2^*$ by replacing each $(x_j)_1$ letter by $(x_j)_2$, and $v_i$ as a word in $X_1^*$ by replacing each  $(x_j)_2$ letter by $(x_j)_1$ in the second case. The resulting word is also equal to $1$ in $H$, so if it contains letters from both generating sets, another subword can rewritten, reducing the number of alternating subwords, so that after a finite number of iterations the word $wu^{-1}$ is equal in $H$ to a word obtained by replacing all letters $(x_j)_2$ by $(x_j)_1$.

It follows that $w$ is equal in $H$ to the word obtained from $w$ by replacing each $(x_j)_2$ letter by $(x_j)_1$. Let $p:(X_1 \sqcup X_2)^*\rightarrow (X_1 \sqcup X_2)^*$ be the map that performs this substitution, so clearly $p$ can be computed in logspace, and $\overline {p(w)}=\overline {w}$ for all $w\in S$. Let $g_{X_1}$ be the logspace normal form function for $G_1$ (since $G$ has logspace normal form it follows that $G_1$ does). Then $f=g_{X_1}\circ p$ is logspace computable  by Lemma \ref{lem:comp}.

Since $\overline{p(w)}$ is equal to $\overline{w}$ in $H$, and $g_{X_1}$ is a normal form function,  we have  $\overline{f(w)}= \overline{w}$.
If  $f(u) = f(v)$  then $g_{X_1}(p(u))=g_{X_1}(p(v))$,  so 
$\overline{p(u)}=\overline {p(v)}$ since $g_{X_1}$ is a normal form function, which implies $\overline{u}=\overline {v}$. Since we have satisfied the criteria of  Lemma \ref{lem:someExt}, the result follows.
\end{proof}

\begin{cor}
Let $F$ be the free group on two generators.
Then the amalgamated product of $F$ with itself along the commutator subgroup
has logspace normal form.
\end{cor}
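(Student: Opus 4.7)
The plan is to apply the preceding corollary directly, with $G = F$ (the free group on two generators) and $N = [F,F]$ (the commutator subgroup). This requires checking three conditions: that $F$ has logspace normal form, that $F/[F,F]$ is linear, and that $F/[F,F]$ has logspace normal form. All three hypotheses turn out to hold for standard reasons, and the result then follows immediately from the preceding corollary.

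First, I would note that $[F,F]$ is a characteristic subgroup of $F$, hence normal, so the amalgamated product $F \ast_{[F,F]} F$ is defined. Next, $F$ itself has logspace normal form by Proposition \ref{prop:freegp}. The quotient $F/[F,F]$ is the free abelian group $\mathbb{Z}^2$, which is linear (e.g.\ via diagonal embedding into $GL_2(\mathbb{Z})$, or more simply as a subgroup of $GL_3(\mathbb{Z})$ via unitriangular matrices), and which has a logspace normal form by Corollary \ref{cor:freeabel}. All the hypotheses of the previous corollary are therefore verified.

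There is essentially no obstacle here; the content of the corollary is entirely absorbed into the machinery of the previous one. The one small point worth stating explicitly is that we are using the free group of rank~$2$ specifically because $[F,F]$ is then a proper, infinitely generated normal subgroup whose quotient $\mathbb{Z}^2$ is nontrivial, making the amalgamation genuinely nontrivial; the argument itself, however, goes through purely by invoking the earlier corollary.
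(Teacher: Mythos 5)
Your proof is correct and follows the paper's own argument exactly: the paper likewise just observes that the abelianization of $F$ is free abelian, hence linear with logspace normal form, and invokes the preceding corollary. One parenthetical slip: $\Z^2$ does not embed diagonally in $GL_2(\Z)$ (that group is virtually free, and its diagonal subgroup is finite), but your alternative embedding via unitriangular matrices in $GL_3(\Z)$ is fine, so the verification of linearity stands.
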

\begin{proof}
The abelianization of $F$ is a free abelian group, and hence is linear and has logspace normal form.
\end{proof}

\begin{cor}
Let $\mathrm{BS}(1,p)$ be a Baumlag-Solitar group (as defined in Section \ref{sec:BS}).
Then the amalgamated product of $\mathrm{BS}(1,p)$ with itself along the commutator subgroup
has logspace normal form.
\end{cor}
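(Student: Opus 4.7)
The plan is to apply the preceding corollary, with $G=\mathrm{BS}(1,p)$ and $N=[G,G]$: it reduces the claim to verifying that $\mathrm{BS}(1,p)$ has logspace normal form, and that the abelianization $G/[G,G]$ is linear and has logspace normal form. The commutator subgroup is automatically normal, so no further verification is needed for the subgroup structure.

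The first hypothesis is supplied by Section \ref{sec:BS}, which constructs a logspace normal form for every solvable Baumslag--Solitar group $\mathrm{BS}(1,p)$. For the second, I would compute the abelianization explicitly: from the presentation $\mathrm{BS}(1,p)=\langle a,t\mid tat^{-1}=a^p\rangle$, adjoining the relation $[a,t]=1$ collapses the defining relation to $a=a^p$, i.e.\ $a^{p-1}=1$, so
$$G/[G,G]\;\cong\;\Z\oplus \Z/(p-1)\Z$$
(understood as $\Z^2$ when $p=1$). This is a finitely generated abelian group, hence linear (via the standard embedding of a finitely generated abelian group into $\mathrm{GL}_n(\mathbb{Q})$), and it carries a logspace normal form by Corollary \ref{cor:freeabel}.

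With both hypotheses in hand, the preceding corollary yields a logspace normal form for $\mathrm{BS}(1,p)*_{[G,G]}\mathrm{BS}(1,p)$. There is no real obstacle here: the only piece of content beyond the invocation of that corollary is the routine identification of the abelianization; the technical work of rewriting a word representing an element of $[G,G]$ via the substitution that sends every generator of the second copy to the corresponding generator of the first, and composing with the logspace normal form for a single copy of $\mathrm{BS}(1,p)$, is already packaged inside the proof of the preceding corollary.
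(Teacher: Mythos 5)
Your proposal is correct and takes essentially the same route as the paper: apply the preceding amalgamated-product corollary with $N=[G,G]$, using the logspace normal form for $\mathrm{BS}(1,p)$ from Section \ref{sec:BS} and the fact that the abelianization is a finitely generated abelian group, hence linear and with logspace normal form (Corollary \ref{cor:freeabel}). Your explicit computation $G/[G,G]\cong\Z\oplus\Z/(p-1)\Z$ is in fact more careful than the paper's one-line claim that the abelianization is cyclic (which holds only for $p=2$), but the discrepancy is harmless since linearity and a logspace normal form are available for every finitely generated abelian group.
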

\begin{proof}
The abelianization of $\mathrm{BS}(1,p)$ is cyclic, and hence is linear  and has logspace normal form.
\end{proof}

We next explore some implications of Lemma \ref{lem:someExt} for 
torus knot groups.

\begin{lem}
\label{lem:oneRel}
Let $G=\langle a, b \; | \; a^m = b^n \rangle$, where $m$ and $n$ are positive integers, and let $N$ be the subgroup of $G$ generated by $a^m$.
Let $$w = a^{r_1}b^{s_1}a^{r_2}b^{s_2}\dots a^{r_k}b^{s_k}$$ such that $\overline{w} \in N$.
Then
$m$ divides $\sum_{j=1}^k r_j$, $n$ divides $\sum_{j=1}^k s_j$, and
$\overline{w} = \overline{a^{mi}}$, where $$i = \frac1{m}\sum_{j=1}^k r_j + \frac1{n} \sum_{j=1}^k s_j.$$
\end{lem}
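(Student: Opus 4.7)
My plan is to reduce the statement to a computation in the abelianization of $G$ and then use the fact that the restriction of the abelianization map to $N$ is injective, so that an equality in $G^{ab}$ lifts to an equality in $G$ for elements of $N$.

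First, I would consider the abelianization homomorphism $\phi: G \to G^{ab} \cong \Z^2 / \langle (m,-n) \rangle$, sending $a \mapsto (1,0)$ and $b \mapsto (0,1)$; this is well defined because the relation $a^m = b^n$ becomes $(m,0) = (0,n)$, i.e.\ $(m,-n) \equiv 0$. Writing $R = \sum_j r_j$ and $S = \sum_j s_j$, we have $\phi(\overline{w}) = (R, S) \bmod (m,-n)$, and $\phi(N)$ is the cyclic subgroup generated by $\phi(\overline{a^m}) = (m,0)$. The assumption $\overline{w} \in N$ then gives integers $\ell, c$ with $(R, S) = \ell(m, 0) + c(m, -n)$, i.e.\ $R = (\ell + c)m$ and $S = -cn$. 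From these equations I read off $m \mid R$, $n \mid S$, $c = -S/n$, and $\ell = R/m + S/n = i$.

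Next I need to upgrade the equality $\phi(\overline{w}) = \phi(\overline{a^{mi}})$ to an equality in $G$. For this I would show that $\phi$ restricted to $N$ is injective. A direct check in $\Z^2/\langle(m,-n)\rangle$ shows that the element $(m,0)$ has infinite order: if $k(m,0) = j(m,-n)$ then $-jn = 0$ forces $j = 0$, and then $km = 0$ forces $k = 0$. Consequently $\phi(\overline{a^m})$ has infinite order in $G^{ab}$, so $\overline{a^m}$ has infinite order in $G$, the subgroup $N = \langle \overline{a^m} \rangle$ is infinite cyclic, and $\phi|_N$ is injective. Since both $\overline{w}$ and $\overline{a^{mi}}$ lie in $N$ and have the same image under $\phi$, we conclude $\overline{w} = \overline{a^{mi}}$, as required.

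I don't anticipate a serious obstacle here: the abelianization computation is routine linear algebra over $\Z$, and the injectivity of $\phi|_N$ is immediate once one observes that $(m,0)$ is non-torsion in the quotient $\Z^2/\langle(m,-n)\rangle$. The only point to be careful about is bookkeeping the signs and the identification $\phi(\overline{a^m}) = (m,0)$ (rather than $(0,n)$) when expressing $\phi(N)$, so that the integer $\ell$ coming out of the congruence really does equal $i = R/m + S/n$.
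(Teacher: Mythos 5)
Your proof is correct, but it takes a genuinely different route from the paper's. The paper argues by induction on the number of syllables $k$: since $G$ is the amalgamated product $\langle a\rangle \ast_{a^m=b^n}\langle b\rangle$ and $\overline{w}$ lies in the (central) amalgamated subgroup, the Normal Form Theorem for free products with amalgamation forces some $r_i$ to be divisible by $m$ or some $s_i$ by $n$; that syllable is then central, so it can be slid to the end and adjacent syllables merged, reducing $k$ without changing the exponent sums, and the induction closes. You instead work entirely in the abelianization $G^{\mathrm{ab}}\cong \Z^2/\langle(m,-n)\rangle$: membership of $\phi(\overline{w})$ in $\phi(N)=\langle(m,0)\rangle$ gives the divisibilities and the identity $i=R/m+S/n$ by elementary $\Z$-linear algebra, and the crucial lifting step is your observation that $(m,0)$ is non-torsion in the quotient, so $\phi$ is injective on $N=\langle\overline{a^m}\rangle$ (equivalently $N\cap[G,G]=1$), whence equality of images of the two elements of $N$ gives $\overline{w}=\overline{a^{mi}}$ in $G$. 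Your argument is shorter and avoids invoking the amalgam normal form theorem altogether, replacing it with a torsion computation in $\Z^2/\langle(m,-n)\rangle$; the paper's syllable-reduction induction is more in the combinatorial spirit of the surrounding section (which repeatedly uses the amalgam normal form theorem) and mirrors the rewriting one would actually implement, but both proofs are complete and both correctly identify the one nontrivial point -- in your case, that both $\overline{w}$ and $\overline{a^{mi}}$ lie in $N$ so the abelianized equality can be lifted.
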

\begin{proof}
We proceed by induction on $k$.
The case when $k=1$ is clear.
Assume that $k>1$ and that our result holds for $k-1$. Then by the Normal Form Theorem for Free Products with Amalgamation (\cite{\LS}, Theorem 2.6) either $m$ divides $r_i$ for some $i$,
or $n$ divides $s_i$ for some $i$.
Let us assume the latter, the argument being the same in either case.
Since $\overline{b^{s_i}}$ is central, we may assume that
$$w = a^{r_1}b^{s_1}a^{r_2}b^{s_2} \dots b^{s_{i-1}}a^{r_i + r_{i+1}}b^{s_{i+1}} \dots a^{r_k}b^{s_k + s_i}.$$
Our result now follows from our inductive assumption.
\end{proof}

\begin{cor}
The torus knot group $\displaystyle G= \langle a, b \; | \; a^m = b^n \rangle$ for $m,n$ positive integers has logspace normal form.
\end{cor}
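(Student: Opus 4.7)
The plan is to apply Lemma \ref{lem:someExt} with the central (hence normal) subgroup $N=\langle \overline{a^m}\rangle = \langle \overline{b^n}\rangle$ of $G$. Adjoining $a^m=1$ to the presentation collapses both defining relators and yields $G/N \cong (\Z/m\Z) \ast (\Z/n\Z)$. Each free factor is finite, hence linear over any field, and admits a trivially logspace computable normal form (store the current element in the finite-state control). Corollary \ref{cor:freeProdLinear} therefore gives $G/N$ a logspace normal form.

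To satisfy the second hypothesis of Lemma \ref{lem:someExt}, I construct a logspace function $f\colon S\to S$, where $S=\{w\in X^*\mid \overline{w}\in N\}$ and $X=\{a^{\pm 1},b^{\pm 1}\}$. For $w\in X^*$ let $\sigma_a(w)$ and $\sigma_b(w)$ denote the exponent sums of $a$ and $b$ in $w$. Collecting consecutive same-generator letters writes $w$ as an alternating product $a^{r_1}b^{s_1}\cdots a^{r_k}b^{s_k}$ with the same image in $G$ and the same exponent sums, so Lemma \ref{lem:oneRel} applies: when $\overline{w}\in N$ we have $m\mid\sigma_a(w)$, $n\mid\sigma_b(w)$, and $\overline{w}=\overline{a^{m\,i(w)}}$ where $i(w)=\sigma_a(w)/m+\sigma_b(w)/n$. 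I define $f(w)=a^{m\,i(w)}$, read as a word over $X$.

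Computing $f$ in logspace is straightforward: a single left-to-right pass over $w$ updates two $O(\log n)$-bit binary counters holding $\sigma_a(w)$ and $\sigma_b(w)$; with $m$ and $n$ fixed constants, the integer $m\,i(w)=\sigma_a(w)+m\sigma_b(w)/n$ is then assembled on the work tape and printed as $|m\,i(w)|$ copies of $a$ or $a^{-1}$. Provided $m,n\ge 2$, the element $\overline{a^m}$ has infinite order in $G$, so $i\mapsto \overline{a^{mi}}$ is a bijection $\Z\to N$ and therefore $f(w_1)=f(w_2)$ iff $\overline{w_1}=\overline{w_2}$. The degenerate cases $m=1$ or $n=1$ reduce $G$ to $\Z$, which is already handled by Corollary \ref{cor:freeabel}. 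With all hypotheses of Lemma \ref{lem:someExt} verified, the result follows.

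The only genuine obstacle is that Lemma \ref{lem:oneRel} is stated for words already in alternating form, whereas the transducer must operate on arbitrary inputs. This is handled conceptually rather than by explicit rewriting: the exponent sums $\sigma_a$ and $\sigma_b$ are invariant under the trivial grouping of consecutive same-generator powers, so the integer $i(w)$ is unambiguous and directly computable from the raw input without the machine ever having to produce the alternating form.
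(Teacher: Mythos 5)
Your proposal is correct and follows essentially the same route as the paper: take the central subgroup $N=\langle \overline{a^m}\rangle$, note $G/N\cong(\Z/m\Z)\ast(\Z/n\Z)$ has a logspace normal form by Corollary \ref{cor:freeProdLinear}, compute $f(w)=a^{mi(w)}$ via the exponent-sum formula of Lemma \ref{lem:oneRel} with two binary counters, and conclude by Lemma \ref{lem:someExt}. The extra care you take with injectivity (infinite order of $\overline{a^m}$), the degenerate cases $m=1$ or $n=1$, and the fact that exponent sums are insensitive to the alternating-form hypothesis of Lemma \ref{lem:oneRel} only makes explicit what the paper leaves implicit.
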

\begin{proof}
Let $N$ be the subgroup of $G$ generated by $a^m$, which is normal since
$N$ is central.
$G/N$ is the free product of two finite cyclic groups, so by
Corollary \ref{cor:freeProdLinear}, it has a logspace normal form function.
Let $S$ be the set of words representing elements of $N$.
Let $f:S \rightarrow S$ be the function that takes a word in $S$ to its representative of the form $a^{mi}$.
By Lemma \ref{lem:oneRel} we can calculate $f$ using two counters. This can be done in logspace. The result then follows from Lemma \ref{lem:someExt}.
\end{proof}

Note that the braid group on three strands has presentation $ \langle a, b \; | \; a^2 = b^3 \rangle$. Since braid groups on $n$ strands are linear, it would be interesting to know whether or not they admit logspace normal forms for $n>3$. We thank Arkadius Kalka for pointing this out.

\section{Solvable Baumslag-Solitar groups}\label{sec:BS}

Let  $G = \langle a, t \; | \; t a t^{-1} = a^p \rangle$ for $p\geq 2$, and $X=\{a^{\pm 1}, t^{\pm 1}\}$.
Note that $G$ is isomorphic to the set of all matrices of the form
\begin{displaymath}
\left( \begin{array}{cc} p^i & m \\ 0 & 1
\end{array} \right),
\end{displaymath}
where $i \in \Z$ and $m \in \Z[\frac1{p}]$, where the isomorphism is
given by
\begin{displaymath}
t \rightarrow \left( \begin{array}{cc} p & 0 \\ 0 & 1
\end{array} \right), \ 
a \rightarrow \left( \begin{array}{cc} 1 & 1 \\ 0 & 1
\end{array} \right).
\end{displaymath}

We obtain a normal form as follows.  Write \begin{displaymath}
\left( \begin{array}{cc} p^i & m \\ 0 & 1
\end{array} \right) =\left( \begin{array}{cc} 1 & m \\ 0 & 1
\end{array} \right) \left( \begin{array}{cc} p^i & 0 \\ 0 & 1
\end{array} \right).
\end{displaymath}
Then $m\in\mathbb Z[\frac1{p}]$  has a unique $p$--ary expansion as
either \begin{itemize}
\item $0$,
\item $\frac{\eta_0}{p^{\alpha_0}}+\frac{\eta_1}{p^{\alpha_1}}+\dots+\frac{\eta_k}{p^{\alpha_k}}$
with $0<\eta_j<p$ and $\alpha_0>\alpha_1>\dots >\alpha_k$, or
\item $\frac{\eta_0}{p^{\alpha_0}}+\frac{\eta_1}{p^{\alpha_1}}+\dots+\frac{\eta_k}{p^{\alpha_k}}$
with $-p<\eta_j<0$ and $\alpha_0>\alpha_1>\dots >\alpha_k$
\end{itemize}
where the $p$--ary
expansion for $m$ is written from least to most significant bits.
Finally note that
 \begin{displaymath}
\left( \begin{array}{cc} 1 & \frac{\eta_j}{p^{\alpha_j}} \\ 0 & 1
\end{array} \right) = \left( \begin{array}{cc} \frac1{p^{\alpha_j}} & 0 \\ 0 & 1
\end{array} \right) \left( \begin{array}{cc} 1 & \eta_j \\ 0 & 1
\end{array} \right) \left( \begin{array}{cc} p^{\alpha_j} & 0 \\ 0 & 1
\end{array} \right)=t^{-\alpha_j}a^{\eta_j}t^{\alpha_j},
\end{displaymath}
so
it follows that each element
of $G$ can be written uniquely in one of the following
three forms:
 \begin{itemize}
\item $t^i$,
\item $(a^{\eta_0})^{t^{\alpha_0}} (a^{\eta_1})^{t^{\alpha_1}} \dots (a^{\eta_k})^{t^{\alpha_k}} t^i $, 
\item $ (a^{-\eta_0})^{t^{\alpha_0}} (a^{-\eta_1})^{t^{\alpha_1}} \dots (a^{-\eta_k})^{t^{\alpha_k}} t^i $,
\end{itemize}
where $i,k \in \Z$, $k\geq 0$, $0< \eta_j <p$,   $\alpha_0 > \alpha_1> \dots > \alpha_k$, and $x^y=y^{-1}xy$.


For example, for $p=2$,
\begin{displaymath}
\left( \begin{array}{cc} 8 & \frac{11}{4} \\ 0 & 1
\end{array} \right)
\end{displaymath}
can be written as \begin{displaymath}
\left( \begin{array}{cc} 1 & \frac{1}{2^2} \\ 0 & 1
\end{array} \right)\left( \begin{array}{cc} 1 & \frac{1}{2^1} \\ 0 & 1
\end{array} \right)\left( \begin{array}{cc} 1 & 2^1 \\ 0 & 1
\end{array} \right) \left( \begin{array}{cc} 2^3 & 0\\ 0 & 1
\end{array} \right)= a^{t^{2}} a^{t^{1}}a^{t^{-1}}t^3.\end{displaymath}

Define the {\em level} of a letter in a word $w\in X^*$ to be the $t$-exponent sum of the prefix  of $w$ ending with this letter.
For example,  the levels of the $a$ letters in the word $a^{t^{2}} a^{t}a^{t^{-1}}t^3$ are $-2,-1,1$ respectively. 

 If $w\in X^*$, let $\mathrm{texp}$ denote the $t$-exponent sum of $w$, $l_{\min}$ the minimum level of any letter in $w$, and  $l_{\max}$ the maximum level of a letter in $w$.
\begin{lem}\label{lem:step1}
 If $w\in X^*$  is  written on an input tape, then we can compute and store  $\mathrm{texp}, l_{\min}$ and $l_{\max}$ in logspace. \end{lem}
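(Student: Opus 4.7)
The plan is to scan the input word once from left to right, maintaining three binary counters on the work tape that track the current running level, the minimum level seen so far, and the maximum level seen so far.

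First I would allocate on the work tape three signed binary counters, call them $c$, $\ell_{\min}$, $\ell_{\max}$, all initialized to $0$. I would also maintain a binary counter $j$ for the current position of the input read-head. Then I would loop: while $j$ is at most $|w|$, read the letter at position $j$, and update as follows. If the letter is $t$, increment $c$; if it is $t^{-1}$, decrement $c$; if it is $a^{\pm 1}$, leave $c$ unchanged. After each such update, compare $c$ to $\ell_{\min}$ and $\ell_{\max}$ and overwrite whichever is exceeded. Then increment $j$ and repeat. When the scan finishes, output $\mathrm{texp}=c$, together with $\ell_{\min}$ and $\ell_{\max}$, all stored in binary on the work tape.

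The only thing to verify is the space bound. Each of $c$, $\ell_{\min}$, $\ell_{\max}$ is a $t$-exponent sum over some prefix of $w$ and hence has absolute value at most $n=|w|$, so each fits in $O(\log n)$ bits once a sign bit is included. The position counter $j$ also fits in $O(\log n)$ bits. Incrementing, decrementing, and comparing $O(\log n)$-bit integers are standard logspace-computable operations on the work tape, so the entire procedure uses $O(\log n)$ space.

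I do not anticipate any real obstacle: the lemma is essentially the statement that partial sums of $\pm 1$ increments, together with their running extrema, are computable in logspace, which is a textbook example. The only point to be careful about is the representation of signed integers in binary so that decrementing across zero and comparing negatives is handled correctly, but this is routine.
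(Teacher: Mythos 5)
Your proof is correct and follows essentially the same approach as the paper: a single left-to-right scan maintaining binary counters for the running $t$-exponent sum and its extrema, with the observation that all values are bounded in absolute value by $|w|$ and hence fit in $O(\log n)$ bits. The only cosmetic difference is that the paper updates $l_{\max}$ only on reading $t$ and $l_{\min}$ only on reading $t^{-1}$, which is equivalent to your comparing both after each update.
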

  \begin{proof}
  We perform the following logspace algorithm:
\begin{enumerate}\item set binary counters  $\mathrm{texp},l_{\min}, l_{\max}$ to zero
\item scan the input from left to right
\begin{itemize}
\item if the next letter is $t$, increment $\mathrm{texp}$ by 1, and set \\ $l_{\max}= \max\{\mathrm{texp},l_{\max}\}$
\item if the next letter is $t^{-1}$, decrement $\mathrm{texp}$ by 1,  and set \\ $l_{\min}= \min\{\mathrm{texp},l_{\min}\}$
\item if the next letter is $a^{\pm 1}$, do nothing.
\end{itemize}\end{enumerate}
When the end of the input is reached, the counters  $\mathrm{texp},l_{\min}, l_{\max}$ contain the required values for $w$, and have absolute value no more than the length of the input. 
\end{proof}

We will use the fact  that since $G$ is metabelian, words of zero $t$-exponent sum  commute in $G$. 
For example, if $u=atatatat^{-2}at^{-2}at$, the subword $tatat^{-2}$   has zero $t$-exponent sum, and so we may commute it past the first $a$ at level 1 to obtain $\displaystyle at(tatat^{-2})aat^{-2}at$, as illustrated in Figure \ref{fig1}.  This means we may collect together $a^{\pm 1}$ letters at the same level without changing the group element represented by a word.
\begin{centering}
\begin{figure}[h!]
\begin{tikzpicture}[scale=.8]
   \node (la) at (0,5) {level 3};
   \node (lb) at (0,4) {level 2};
   \node (lc) at (0,3) {level 1};
      \node (ld) at (0,2) {level 0};
         \node (le) at (0,1) {level -1};
         
         \draw[dotted] (1,3)--(7,3);
         
         \draw[-latex]    (1,2)--(2,2); 

         \draw[-latex]    (2,2)--(2,3);       
               \node (a) at (1.7,2.5) {$t$};
               
               \draw[-latex]    (2,3)--(3,3); 
               
                   \node (a) at (2.5,2.7) {$a$};

       \draw[-latex]    (3,3)--(3,4); 
              \draw[-latex]    (3,4)--(4,4); 
              \draw[-latex]    (4,4)--(4,5);       
                     \draw[-latex]    (4,5)--(5,5);    
              \draw[latex-]    (5,5)--(5,4);    
              \draw[latex-]    (5,4)--(5,3);    
        \draw[-latex]    (5,3)--(6,3);    
        
              \node (a) at (5.5,2.7) {$a$};
              
        \draw[latex-]    (6,3)--(6,2);    
        \draw[latex-]    (6,2)--(6,1);    
        \draw[-latex]    (6,1)--(7,1);
                \draw[-latex]    (7,1)--(7,2);

                         \draw[dotted] (8,3)--(14,3);
         
         \draw[-latex]    (8,2)--(9,2); 

         \draw[-latex]    (9,2)--(9,3);       
    
                 \node (a) at (8.7,2.5) {$t$};

       \draw[-latex]    (9,3)--(9,4); 
              \draw[-latex]    (9,4)--(10,4); 
              \draw[-latex]    (10,4)--(10,5);       
                     \draw[-latex]    (10,5)--(11,5);    
              \draw[latex-]    (11,5)--(11,4);    
              \draw[latex-]    (11,4)--(11,3);    
        \draw[-latex]    (11,3)--(12,3);    
        
              \node (a) at (11.5,2.7) {$a$};
                   \draw[-latex]    (12,3)--(13,3); 
                   
                         \node (a) at (12.5,2.7) {$a$};
                   
        \draw[latex-]    (13,3)--(13,2);    
        \draw[latex-]    (13,2)--(13,1);    
        \draw[-latex]    (13,1)--(14,1);
                \draw[-latex]    (14,1)--(14,2);

   \end{tikzpicture}
 \caption{Subwords of zero $t$-exponent commute with $a$ letters.}

   \label{fig1}
\end{figure}
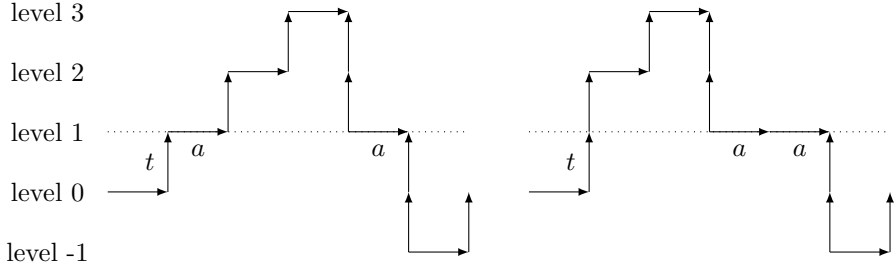
\end{centering}

Let $S\subset X^*$ be the set of words of zero $t$-exponent sum. We define a function    $f_S:S\rightarrow S$ such that $f_S(u)$ is 
 a word  of the form $\lambda$ or 
$$(a^{\eta_0})^{t^{\alpha_0}} (a^{\eta_1})^{t^{\alpha_1}} \dots  (a^{\eta_{k-1}})^{t^{\alpha_{k-1}}} (a^{\beta})^{t^{\alpha_k}},$$ where   $\alpha_0 > \alpha_1> \dots > \alpha_k$, $0<\eta_i<p$, $k\geq 0$, and $\beta\in\mathbb Z, \beta\neq 0$, such that $u$ and $f_S(u)$ represent the same element in $\mathrm{BS}(1,p)$. 
Note that the level of each $a^{\pm 1}$ in the subword $(a^{\eta_i})^{t^{\alpha_i}}$ is $-\alpha_i$. Note also  
  that since $\beta$ is allowed to range through all integers,
 the output here is not guaranteed to produce a unique representative 
for each group element, so we do not claim $f_S$ is a normal form function. Instead,  we call $f_S(u)$  an  {\em approximation} of the normal form for $u$. It is 
 computed using the following algorithm.

\begin{alg}[Approximation Algorithm]\label{alg:approx}
 On input $u\in S$:  
{\em\begin{enumerate}
\item run the algorithm in Lemma \ref{lem:step1} and store $l_{\min} $ and $ l_{\max}$ in binary.
	\item set $l=l_{\min}, \mathrm{aexp}=0$ and $\beta=0$. 
	\item while $l \leq l_{\max}$:
	\begin{enumerate}
		\item set texp = 0
	\item \label{item:accum} for each letter $x$ of input, starting at the left most letter: 

	\begin{itemize}

			\item if $x=t$, put $\mathrm{texp}= \mathrm{texp}+1$
			\item if $x=t^{-1}$,  put $\mathrm{texp}= \mathrm{texp}-1$

			\item if  $x=a$ and $\mathrm{texp}=l$, put $\mathrm{aexp}= \mathrm{aexp}+1$
		 	\item   if  $x=a^{-1}$ and $\mathrm{texp}=l$, put $\mathrm{aexp}= \mathrm{aexp}-1$
		\end{itemize}

		\item if $l<l_{\max}$: 

			\begin{itemize}		
				\item compute and store (in binary) $q,r\in\mathbb Z$ \\ where   $\mathrm{aexp}=pq+r$ and $0\leq r<p$ 						
				
				\item if $r\neq 0$ write $(a^r)^{t^{-l}}$ to the output tape, and set $\beta=r$
				\item   \label{item:carry}  set $\mathrm{aexp} = q$
			\end{itemize}
		\item[] \label{item:beta} else:
		
			\begin{itemize}	\item if $\mathrm{aexp}\neq 0$:
				\begin{itemize}	
				\item set $\beta=\mathrm{aexp}$ 
		
				\item  write $(a^{\beta})^{t^{-l_{\max}}}$ to the output tape
				\end{itemize}
	
 				\item	set $\mathrm{aexp} = 0$
				\item increment $l$ by 1. 
			\end{itemize}
		\item\label{item:loopinv} loop invariant: if $v$ is word written on the output tape then we claim that 
		\begin{displaymath} 
		\overline{ u} = \overline{v (a^{\mathrm{aexp}})^{t^{-l}}[u]_l},
		\end{displaymath}
		where $[u]_l$ is the word obtained from $u$ by ignoring all those occurrences
		of $a$ and $a^{-1}$ at levels  less than $l$.    
	\end{enumerate}

\end{enumerate}}
\end{alg}

Notice that we are effectively computing the $p$--ary expansion of the upper-right entry 
in our matrix representation, and that to do so requires that  ``carrying" $a$-exponent sums 
from one level to the next; this is the job of the variable $\mathrm{aexp}$ in the algorithm.

Note also that the variable $\beta$ is changed  in step 3(c) only if the new value is nonzero and some  $a^{\pm 1}$ letters are written to the output tape. 
So at the end of the algorithm $\beta=0$ if and only if the output word has no  $a^{\pm 1}$ letters, and the algorithm has no output.
If $\beta\neq 0$ then the suffix of the output word is $(a^{\beta})^{t^{-l}}$.



The next three lemmas show that Algorithm \ref{alg:approx} computes the function $f_S$ as required, and in logspace.

\begin{lem}
\label{lem:canCommute}
Let $u$ be a word in $X^*$ with $t$-exponent sum equal to $zero$. 
Let $m$ be the minimum level of any $a$ or $a^{-1}$ in $u$.
Let $e$ be the exponent sum of those $a$s in $u$ which are at level $m$. 
Then $\overline{u} = \overline{(a^{e})^{t^{-m}} [u]_{m+1}}.$
\end{lem}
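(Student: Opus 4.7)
The plan is to use the metabelian property of $G$ (recalled just before the lemma: words of zero $t$-exponent sum commute in $G$) to show that any individual occurrence of $a^{\pm 1}$ at level $m$ in $u$ can be conjugated to the front of $u$ in the form $(a^{\pm 1})^{t^{-m}}$, and then to combine the resulting contributions by induction on the total number $N$ of such occurrences.

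The required sublemma is: whenever $v \in X^*$ has $t$-exponent sum equal to $k$ and $\epsilon \in \{\pm 1\}$, one has $v a^\epsilon v^{-1} = t^k a^\epsilon t^{-k}$ in $G$. To see this, rewrite the left-hand side as $t^k (t^{-k} v) a^\epsilon (t^{-k} v)^{-1} t^{-k}$ and observe that both $t^{-k} v$ and $a^\epsilon$ have $t$-exponent sum zero, hence commute by the cited metabelian fact; the two middle factors then cancel, leaving $t^k a^\epsilon t^{-k} = (a^\epsilon)^{t^{-k}}$.

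I then induct on $N \geq 1$. For the inductive step, let $a^\epsilon$ be (say) the leftmost level-$m$ occurrence of $a^{\pm 1}$ in $u$ and write $u = u_1 a^\epsilon u_2$. By definition of level, $u_1$ has $t$-exponent sum $m$, so the sublemma applied with $v = u_1$ and $k = m$ gives
\[
\overline{u} \;=\; \overline{(a^\epsilon)^{t^{-m}} \, u_1 u_2}.
\]
Deleting the single letter $a^\epsilon$ does not alter the $t$-exponent sum of any prefix of $u$, so $u_1 u_2$ still contains exactly $N - 1$ occurrences of $a^{\pm 1}$ at level $m$, their exponents sum to $e - \epsilon$, and $[u_1 u_2]_{m+1} = [u]_{m+1}$. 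When $N = 1$ this finishes the proof, since then $e = \epsilon$ and $u_1 u_2 = [u]_{m+1}$. When $N \geq 2$, the minimum level of $a^{\pm 1}$ in $u_1 u_2$ is still $m$ (because $N - 1 \geq 1$ such letters remain), so the inductive hypothesis applies and yields $\overline{u_1 u_2} = \overline{(a^{e-\epsilon})^{t^{-m}} [u]_{m+1}}$. Since conjugation by $t^{-m}$ is a group homomorphism, $(a^\epsilon)^{t^{-m}} (a^{e-\epsilon})^{t^{-m}} = (a^e)^{t^{-m}}$, and the claim follows.

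The main (modest) obstacle is really just the sublemma: once one sees that $u_1 a^\epsilon u_1^{-1}$ depends only on the $t$-exponent sum of $u_1$, the rest is careful bookkeeping of levels and exponent sums. No complications arise in the induction because the hypothesis that $m$ is the minimum level of $a^{\pm 1}$ in $u$ guarantees $N \geq 1$, and after one step the word $u_1 u_2$ still satisfies this hypothesis provided $N \geq 2$.
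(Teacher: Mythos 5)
Your proof is correct. It rests on exactly the same key fact as the paper's proof (words of zero $t$-exponent sum commute in $G$, since the kernel of the $t$-exponent map is abelian), but you package the argument differently: the paper factors $u = u_0 a_1 u_1 a_2 u_2 \dots a_s u_s$ along \emph{all} the level-$m$ letters at once, inserts $t^{-m}t^m$ pairs around each $a_i$, and performs a single simultaneous rearrangement of the resulting zero-$t$-exponent subwords, whereas you first isolate a conjugation identity ($v a^{\epsilon} v^{-1} =_G t^k a^{\epsilon} t^{-k}$ whenever $v$ has $t$-exponent sum $k$) and then peel off the level-$m$ letters one at a time by induction on their number $N$. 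Your bookkeeping is sound: deleting a single $a^{\pm 1}$ changes no prefix $t$-exponent sums, so the remaining level-$m$ letters keep level $m$, the exponent sum drops to $e-\epsilon$, and $[u_1u_2]_{m+1} = [u]_{m+1}$, so the inductive hypothesis applies when $N \geq 2$; the base case and the final combination $(a^{\epsilon})^{t^{-m}}(a^{e-\epsilon})^{t^{-m}} = (a^{e})^{t^{-m}}$ are also fine. The trade-off is minor: your version modularizes the commutation step into a reusable sublemma at the cost of an induction, while the paper's is one direct computation; both are complete proofs of the lemma.
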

\begin{proof}
Let $a_1,a_2,\dots ,a_s$ be the $a^{\pm 1}$ letters in $u$ that are at level $m$.
Write $u$ as $u_0a_1u_1a_2u_2\dots a_su_s$ where all $a^{\pm 1}$ letters in $u_i$ are above level $m$, so $u_0u_1\ldots u_s=[u]_{m+1}$.
Then the $t$-exponent sum of $u_0$ is $m$, the $t$-exponent sum of $u_i$ for $1\leq i<s$ is zero, and for $u_s$ is $-m$.
Inserting $t^{-m}t^m$ pairs before and after each $a_i$ we obtain a word 
$$v=u_0t^{-m}(t^ma_1t^{-m})t^mu_1t^{-m}(t^ma_2t^{-m})t^mu_2\dots t^{-m}(t^ma_st^{-m})t^mu_s$$ with $\overline u=\overline v$. Put $v_0=u_0t^{-m}, v_i=t^mu_it^{-m}$ for $1\leq i<s$ and $v_s=t^mu_s$, so each $v_i$ has zero $t$-exponent sum, and $v=v_0t^ma_1t^{-m}v_1t^ma_2t^{-m}v_2\dots t^ma_st^{-m}v_s$.
Then $$\overline v=\overline {t^ma_1a_2\dots a_st^{-m}   v_0v_1\dots v_s}$$ since words of zero $t$-exponent sum commute, where
$\overline{a_1a_2\dots a_s}=\overline{a^e}$. Finally note that   $v_0v_1\dots v_s=u_0t^{-m}t^mu_it^{-m}\dots t^mu_{s-1}t^{-m}t^mu_s$ which after cancellation of $t^{-m}t^m$ pairs is $[u]_{m+1}$.
\end{proof}

\begin{lem}
The approximation algorithm is correct.
\end{lem}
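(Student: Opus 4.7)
The plan is to prove correctness by verifying the loop invariant stated in step 3(c) of Algorithm \ref{alg:approx} by induction on the iterations of the outer \textbf{while} loop, and then to verify that the final output has the required form. The loop invariant asserts that if $v$ is the current content of the output tape and $l$, $\mathrm{aexp}$ are the current values of the counters at the \emph{start} of an iteration, then $\overline{u} = \overline{v\,(a^{\mathrm{aexp}})^{t^{-l}}\,[u]_l}$. The base case $l = l_{\min}$, $v = \lambda$, $\mathrm{aexp} = 0$ is trivial since $[u]_{l_{\min}} = u$ (every $a^{\pm 1}$ letter sits at a level $\ge l_{\min}$).

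For the inductive step I would split the work in two. First, after step 3(a) we have absorbed into $\mathrm{aexp}$ the $a$-exponent sum $e$ at level $l$; Lemma \ref{lem:canCommute} then gives $\overline{[u]_l} = \overline{(a^e)^{t^{-l}}[u]_{l+1}}$, which combined with the previous invariant shows $\overline{u} = \overline{v\,(a^{\mathrm{aexp}_{\text{new}}})^{t^{-l}}[u]_{l+1}}$ with $\mathrm{aexp}_{\text{new}} = \mathrm{aexp}_{\text{old}}+e$. Second, I will establish the splitting identity
\[
(a^{pq+r})^{t^{-l}} \;=\; (a^r)^{t^{-l}}\,(a^q)^{t^{-(l+1)}},
\]
which is an immediate consequence of the defining relation $tat^{-1} = a^p$ (expanding both sides and using $ta^q t^{-1} = a^{pq}$). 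Applied with $\mathrm{aexp}_{\text{new}} = pq + r$, $0 \le r < p$, this exactly matches the effect of step 3(b): the output grows by $(a^r)^{t^{-l}}$ (whether or not $r=0$), the counter $\mathrm{aexp}$ updates to $q$, and $l$ increments. Thus the invariant is preserved.

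After the iteration in which $l = l_{\max}$ finishes, the word $[u]_{l_{\max}+1}$ contains no $a^{\pm 1}$ letters and has the same $t$-exponent sum as $u$, namely zero; hence it represents the identity. The invariant then reads $\overline{u} = \overline{v\,(a^{\mathrm{aexp}})^{t^{-l_{\max}+1}}}$, and step 3(b)'s else-branch writes precisely $(a^{\beta})^{t^{-l_{\max}}}$ (when $\mathrm{aexp} = \beta \neq 0$) or nothing (when $\mathrm{aexp} = 0$), so the final output tape word equals $\overline{u}$ in $G$. Finally I will read off that the output has the claimed form: terms are written in order of increasing $l$, so the exponents $\alpha_i = -l_i$ are strictly decreasing; the intermediate coefficients $\eta_i$ are $p$-ary digits, hence $0 < \eta_i < p$ (digits equal to $0$ produce no output and are skipped); and $\beta$ is an unrestricted nonzero integer because at the top level $l_{\max}$ there is no further level to carry into.

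The main obstacle, and the one place where care is needed, is keeping track of exactly which of $\mathrm{aexp}_{\text{old}}$ and $\mathrm{aexp}_{\text{new}}$ enters the invariant at each substep, together with the edge cases $r=0$ and $\mathrm{aexp}=0$ where the algorithm writes nothing yet the invariant must still hold. Once these bookkeeping details are lined up, the two algebraic inputs (Lemma \ref{lem:canCommute} and the splitting identity coming from $tat^{-1}=a^p$) drive the proof essentially mechanically.
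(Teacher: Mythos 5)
Your proposal is correct and follows essentially the same route as the paper: establish the loop invariant by induction on iterations using Lemma \ref{lem:canCommute} together with the relation $ta^qt^{-1}=a^{pq}$ (your splitting identity $(a^{pq+r})^{t^{-l}}=(a^r)^{t^{-l}}(a^q)^{t^{-(l+1)}}$ is exactly the paper's chain of equalities), then read off correctness at loop exit, with the output format checked directly. The only blemish is a small typo in the final step (the conjugating power should be $t^{-l_{\max}}$ before the else-branch writes its term, not $t^{-l_{\max}+1}$), which does not affect the argument.
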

\begin{proof}
It is clear that the format of the output word is correct, so it remains to show that the 
output word is equal in the group to the input word. 
We will first show that the loop invariant defined in step \ref{item:loopinv} is preserved under one iteration of the main loop. 
Let $l$, $\mathrm{aexp}$ and $v$ represent the values of these variables at the start of an iteration,
that is, at the top of the loop, where $v$ is the word currently written on the output tape. Assume the loop invariant holds, so
 $\overline u=\overline {v(a^{\mathrm{aexp}})^{t^{-l}}[u]_l}$.

Let $l'$, $\mathrm{aexp}'$ and $v'$ represent the values of these variables at the end of 
that iteration, that is, at the point of the loop which is marked with the loop invariant. Note that $l'=l+1$.

Let $e$ be the exponent sum of those $a^{\pm 1}$s at level $l$,
the lowest level of any $a^{\pm 1}$ letters in $[u]_l$.
The algorithm sets  $\mathrm{aexp}' = q$ where $\mathrm{aexp} + e = pq + r$
and writes $(a^r)^{t^{-l}}$ to the output tape if $r>0$.
So $\displaystyle\overline{v'}=\overline {v(a^r)^{t^{-l}}}$ (which includes the case $r=0$).

By Lemma \ref{lem:canCommute} we have  $\displaystyle \overline{[u]_l}=\overline{(a^e)^{t^{-l}}[u]_{l'}}$.
Then \[\begin{array}{llllllll}
\overline u & = & \overline {v(a^{\mathrm{aexp}})^{t^{-l}}[u]_l} 
& = & \overline {v(a^{\mathrm{aexp}})^{t^{-l}}(a^e)^{t^{-l}}[u]_{l'}}\\
\\
& = & \overline {v(a^{\mathrm{aexp}+e})^{t^{-l}}[u]_{l'}}
& = & \overline {v(a^{pq+r})^{t^{-l}}[u]_{l'}}\\
\\
& = & \overline {v(a^r)^{t^{-l}}  (a^{pq})^{t^{-l}}     [u]_{l'}}
& = & \overline {v' (a^{pq})^{t^{-l}}     [u]_{l'}}\\
\\
& = & \overline {v'  t^l (a^{pq}) t^{-l}       [u]_{l'}}
& = & \overline {v'  t^l (ta^{q}t^{-1}) t^{-l}       [u]_{l'}}\\
\\
& = & \overline {v' t^{l+1}(a^{q})t^{-l-1}     [u]_{l'}}
& = & \overline {v' t^{l'}(a^{q})t^{-l'}      [u]_{l'}}\\
\\
& = & \overline {v' (a^{q})^{t^{-l'}}     [u]_{l'}} 
& = & \overline {v'(a^{\mathrm{aexp'}})^{t^{-l'}}[u]_{l'}}.
\end{array}\]
Thus we see that the invariant is preserved.

At the start of the main loop, the loop invariant holds, since $v = \lambda$, $\mathrm{aexp} = 0$,
$l = l_{\min}$, and $u=[u]_{l_{\min}}$.
After the last iteration we have  $l=l_{\max}+1$, $[u]_{l_{\max}+1} = \lambda$ and $\mathrm{aexp} = 0$, so 
$$\overline u =\overline {v(a^{\mathrm{aexp}})^{t^{-l}}[u]_l} =\overline {v(a^{0})^{t^{-l}}\lambda}=\overline v, $$
so the word written to the output
tape is equal in the group to the input word. 
\end{proof}

 \begin{lem}\label{lem:BSalgorithm}
The function $f_S$ can be computed in logspace.
   \end{lem}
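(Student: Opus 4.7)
The plan is to inspect the Approximation Algorithm and show that every piece of information stored on the work tape at any moment can be encoded in $O(\log n)$ bits, where $n$ is the length of the input word $u$, and that every primitive action the algorithm performs can be carried out by a logspace subroutine. Since the algorithm itself uses only a bounded number of named variables plus a read-head position, this will suffice.

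First I would handle the easy counters. By Lemma \ref{lem:step1}, $l_{\min}, l_{\max}$ can be computed and stored in binary in logspace, and both are bounded in absolute value by $n$. The running variables $l$ and $\mathrm{texp}$ satisfy $|l|,|\mathrm{texp}| \le \max(|l_{\min}|,|l_{\max}|) \le n$, so they use $O(\log n)$ bits. The variables $r$ and $\beta$ arising from the division $\mathrm{aexp} = pq + r$ satisfy $0 \le r < p$ (a fixed constant) and $|\beta|$ equals either $|r|$ or the terminal value of $|\mathrm{aexp}|$, so these are no larger than $\mathrm{aexp}$.

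The main obstacle, and the only nontrivial point, is bounding $|\mathrm{aexp}|$. I would prove by induction on the loop iteration (indexed by the value of $l$) that at the top of the loop $|\mathrm{aexp}| \le \frac{p}{p-1}\, n$. Initially $\mathrm{aexp}=0$. At each iteration, step 3(a) adds to $\mathrm{aexp}$ the signed count $e_l$ of $a^{\pm 1}$ letters at level $l$, and then step 3(c) replaces $\mathrm{aexp}+e_l$ by $q = \lfloor (\mathrm{aexp}+e_l)/p \rfloor$ (or an analogous floor toward zero). Thus $|\mathrm{aexp}_{\text{new}}| \le (|\mathrm{aexp}_{\text{old}}| + |e_l|)/p$. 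Since $\sum_l |e_l|$ is at most the total number of $a^{\pm 1}$ letters in $u$, hence at most $n$, iterating this bound gives $|\mathrm{aexp}| \le \frac{1}{p}\sum_{i \ge 0} p^{-i} \cdot n \le \frac{p}{p-1} n \le 2n$, which fits in $O(\log n)$ bits. The counter $q$ used inside step 3(c) satisfies the same bound.

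It remains to verify the primitive operations. Incrementing and decrementing binary counters of size $O(\log n)$, comparing them, and computing $q,r$ with $\mathrm{aexp}=pq+r$ for the fixed constant $p$, are all standard logspace operations. Scanning the input repeatedly in step 3(a) is accomplished by storing the input read-head position in binary, as remarked just after Definition \ref{def:transducer}. Writing the block $(a^r)^{t^{-l}} = t^{l} a^{r} t^{-l}$ to the output tape uses a single binary counter from $0$ to $|l|$ to print $|l|$ copies of $t^{\pm 1}$, then the constantly many letters $a^{\pm r}$, then $|l|$ more copies of $t^{\mp 1}$; this uses only $O(\log n)$ space. The outer loop over $l$ runs at most $l_{\max}-l_{\min}+1 \le 2n+1$ times, but loop count affects only runtime, not space. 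Composing these observations, the entire Approximation Algorithm is a deterministic logspace transducer, so $f_S$ is logspace computable.
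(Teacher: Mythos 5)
Your proposal is correct and takes essentially the same route as the paper: the paper's proof simply asserts that $\mathrm{aexp}$, $|q|$ and $|r|$ never exceed the input length and hence can be stored in binary in $O(\log n)$ space, while you additionally supply the (routine) induction establishing that bound and verify the primitive logspace subroutines. One small nitpick: your recurrence $|\mathrm{aexp}_{\text{new}}| \le (|\mathrm{aexp}_{\text{old}}| + |e_l|)/p$ ignores rounding when $\mathrm{aexp}$ is negative (the condition $0 \le r < p$ forces $q = \lfloor \mathrm{aexp}/p \rfloor$, so $|q|$ can exceed $|\mathrm{aexp}|/p$ by up to $1$), but this only adds a bounded additive term per iteration and leaves the $O(n)$ bound, and hence the logspace conclusion, intact.
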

\begin{proof} 
At all times  the integers $\mathrm{aexp}$, $|q|$ and $|r|$ are  at most  the length of the input word, so can be computed and  stored in binary in logpsace, and each time the main loop is executed the stored values can be overwritten.  It follows that the algorithm described runs in logspace. 
\end{proof}

 For $u\in S$ define $\beta_u$ to be the value of  the variable $\beta$ stored at the end of the algorithm   computing $f_S(u)$.
 

\begin{cor}\label{cor:negative}
If  $u\in S$ and  $\beta_u<0$, then $f_S(u^{-1})$ contains only $a$ letters.
\end{cor}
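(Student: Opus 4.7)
The plan is to work in the matrix representation of $\mathrm{BS}(1,p)$ from the start of Section~\ref{sec:BS}. Let $m$ denote the upper-right entry of the matrix representing $\overline{u}$; then $\overline{u^{-1}}$ corresponds to the matrix with upper-right entry $-m$. My strategy is to show that $\beta_u<0$ forces $m<0$, whence applying the same analysis to $u^{-1}$ (which has positive matrix entry) forces the last coefficient in $f_S(u^{-1})$ to be non-negative. Since every intermediate coefficient $\eta_i$ in the output of $f_S$ is automatically in $\{1,\ldots,p-1\}$, this will yield the conclusion.

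Since $u\in S$ has $t$-exponent sum zero, a direct check shows that $u$ and $u^{-1}$ visit the same set of levels, so in particular $l_{\max}$ is the same in both runs of Algorithm~\ref{alg:approx}. Writing $E_l$ for the net $a$-exponent at level $l$ in $u$, one has $m=\sum_{l=l_{\min}}^{l_{\max}} E_l\,p^l$, while the corresponding level-sums for $u^{-1}$ are $-E_l$, consistent with $m(u^{-1})=-m$.

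The key step is the identity
\[ m \;=\; T_u\, p^{l_{\max}} \;+\; L_u, \qquad 0\le L_u<p^{l_{\max}}, \]
where $T_u$ denotes the final value of $\mathrm{aexp}$ at the top iteration $l=l_{\max}$ and $L_u=\sum_{l=l_{\min}}^{l_{\max}-1} r_l\, p^l$ is the contribution of the intermediate remainders $r_l\in\{0,\ldots,p-1\}$. This is obtained by multiplying the algorithm's recurrence $\mathrm{aexp}+E_l=p\,q_l+r_l$ by $p^l$ and telescoping, using $m=\sum_l E_l\,p^l$; the upper bound on $L_u$ is a geometric-sum estimate. It follows at once that $T_u<0 \iff m<0$.

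Now every intermediate assignment $\beta\leftarrow r_l$ in Algorithm~\ref{alg:approx} has $r_l\in\{1,\ldots,p-1\}$, so the value $\beta_u$ can be negative only if the top branch overwrote $\beta$ with $T_u<0$. Hence $\beta_u<0$ implies $T_u<0$, and therefore $m<0$. Applying the identity to $u^{-1}$, whose matrix entry is $-m>0$, yields $T_{u^{-1}}\ge 0$; the last term of $f_S(u^{-1})$ is then either absent or equals $(a^{T_{u^{-1}}})^{t^{-l_{\max}}}$ with positive exponent. Combined with the intermediate terms $(a^{\eta_i})^{t^{\alpha_i}}$ whose $\eta_i$ are positive, this shows $f_S(u^{-1})$ contains only $a$ letters. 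The main obstacle is verifying the telescoping identity cleanly, but once it is in place the conclusion is a quick sign comparison.
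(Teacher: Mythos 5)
Your proof is correct and takes essentially the same route as the paper: identify the sign of $\beta_u$ with the sign of the upper-right matrix entry, pass to $u^{-1}$, and observe that all intermediate remainders written by Algorithm~\ref{alg:approx} lie in $\{1,\dots,p-1\}$, so only the final block could produce $a^{-1}$ letters. The only difference is that the paper asserts the sign correspondence without proof, whereas you justify it explicitly via the telescoping identity $m = T_u\,p^{l_{\max}} + L_u$ with $0 \le L_u < p^{l_{\max}}$.
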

\begin{proof}
If $\beta_u<0$ then the top right entry of the matrix representing $u$ is negative, so  the top right entry of the matrix representing $u^{-1}$ is positive, so  $\beta_{u^{-1}}$ is positive, so all $a^{\pm 1}$ letters output by Algorithm \ref{alg:approx} have positive exponent.
\end{proof}



\begin{prop}\label{prop:BSlogspace}
The normal form \begin{itemize}
\item $t^i$,
\item $(a^{\eta_0})^{t^{\alpha_0}} (a^{\eta_1})^{t^{\alpha_1}} \dots (a^{\eta_k})^{t^{\alpha_k}}t^i $,
\item $(a^{-\eta_0})^{t^{\alpha_0}} (a^{-\eta_1})^{t^{\alpha_1}} \dots (a^{-\eta_k})^{t^{\alpha_k}}t^i $,
\end{itemize}
where $i,k \in \Z$, $k\geq 0$, $0< \eta_j <p$,   $\alpha_0 > \alpha_1> \dots > \alpha_k$, and $x^y=y^{-1}xy$,
 can be computed in logspace.
\end{prop}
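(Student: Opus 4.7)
The plan is to reduce to the approximation function $f_S$ of Algorithm \ref{alg:approx}, then resolve the sign of the top-right matrix entry and handle the $t^i$ suffix separately. Given input $w \in X^*$, first use Lemma \ref{lem:step1} to compute $i := \mathrm{texp}(w)$, together with $l_{\min}$ and $l_{\max}$ for the virtual word $u := w t^{-i}$, storing everything in binary. Since $i$ is stored in binary we can simulate an input tape for $u$ in logspace: position $j$ returns the $j$-th letter of $w$ if $j \leq |w|$, and returns $t^{-1}$ (or $t$, if $i < 0$) otherwise. Likewise we can simulate $u^{-1}$ by reading positions of $u$ in reverse and inverting each letter. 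Both $u$ and $u^{-1}$ are then valid inputs for Algorithm \ref{alg:approx}; note $u \in S$ since $\mathrm{texp}(u) = 0$.

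A preliminary pass runs Algorithm \ref{alg:approx} on $u$ with output suppressed, tracking only the value $\gamma$ of $\mathrm{aexp}$ at the end of the $l = l_{\max}$ iteration (logspace by Lemma \ref{lem:BSalgorithm}). The middle factors of $f_S(u)$ have $0 < \eta_j < p$ and together contribute at most $\sum_{j < k}(p-1)p^{l_j} < p^{l_{\max}}$ in absolute value to the top-right matrix entry $m$ of $u$, while the top-level factor contributes exactly $\gamma \cdot p^{l_{\max}}$; hence $\gamma$ has the same sign as $m$.

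We now split into three cases. If $\gamma = 0$, re-run Algorithm \ref{alg:approx} on $u$ and transcribe its entire output (which consists only of middle factors with $0 < \eta_j < p$ in strictly descending $\alpha$), then append $t^i$. If $\gamma > 0$, re-run Algorithm \ref{alg:approx} on $u$ transcribing only those factors emitted while $l < l_{\max}$; then compute the base-$p$ digits $b_0, \dots, b_N$ of $\gamma$ by iterated division (logspace because $|\gamma| \leq |w|$ forces $N = O(\log |w|)$), and for each nonzero $b_j$ emit $(a^{b_j})^{t^{-(l_{\max} + j)}}$, extending the sequence of $\alpha$s strictly downward; finally append $t^i$. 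If $\gamma < 0$, Corollary \ref{cor:negative} guarantees that the analogous value $\gamma'$ produced by running Algorithm \ref{alg:approx} on $u^{-1}$ is positive and that every $\eta_j$ written by $f_S(u^{-1})$ is positive; we apply the same procedure to $u^{-1}$ as in the previous case but negate the sign of every $\eta_j$ transcribed and every base-$p$ digit of $\gamma'$ emitted, then append $t^i$. Correctness of the negating step uses the fact that the normal closure of $a$ in $G$ is isomorphic to the additive group $\mathbb{Z}[\frac{1}{p}]$ and is therefore abelian, so inverting a product $\prod_j (a^{\eta_j})^{t^{\alpha_j}}$ amounts to negating each $\eta_j$ while preserving the ordering of the $\alpha_j$.

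The main obstacle is verifying that the case split yields exactly one of the three forms in the statement, and in particular that splicing the base-$p$ expansion of $\gamma$ (respectively of $\gamma'$, with negated digits) onto the output of Algorithm \ref{alg:approx} produces factors whose $\alpha$ values satisfy $\alpha_0 > \alpha_1 > \dots > \alpha_k$. Each stored quantity ($i$, $l_{\min}$, $l_{\max}$, $\gamma$, the current digit index $j$) has absolute value $O(|w|)$ and so fits in logspace in binary, and each sub-procedure is logspace, so the complete procedure runs in logspace.
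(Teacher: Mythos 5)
Your proposal is correct and follows essentially the same route as the paper: compute $\mathrm{texp}$ via Lemma \ref{lem:step1}, run the Approximation Algorithm on $u = wt^{-\mathrm{texp}}$, expand the leftover top-level coefficient in base $p$ (this is exactly the paper's function $h_S$), and in the negative case run the procedure on $u^{-1}$ and negate the exponents (the paper's $\iota$, $\tau$ and Corollary \ref{cor:negative}), finally appending $t^{\mathrm{texp}}$. The only quibble is that your claim that $\gamma$ has the same sign as $m$ (and that $\gamma'>0$ strictly) fails in the boundary case where the top-level exponent is zero, but your separate $\gamma=0$ branch and the convention of emitting only nonzero digits make this harmless.
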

\begin{proof}

Define a function $h_S$ on (nontrivial) words of the form $$u=(a^{\eta_0})^{t^{\alpha_0}} (a^{\eta_1})^{t^{\alpha_1}} \dots (a^{\eta_{s-1}})^{t^{\alpha_{s-1}}}  (a^{\beta})^{t^{\alpha_s}}$$ with $\beta>0, \alpha_0<\dots<\alpha_{s-1}, 0< \eta_i<p$  as follows. Put $$v=(a^{\eta_0})^{t^{\alpha_0}} (a^{\eta_1})^{t^{\alpha_1}} \dots (a^{\eta_{s-1}})^{t^{\alpha_{s-1}}} ,$$ so $u=v  (a^{\beta})^{t^{\alpha_s}}$. Write $\beta=b_0+b_1p+\dots +b_{\kappa}p^{\kappa}$ with $0\leq b_i<p$ and $b_{\kappa}>0$. Then 
$$h_S(u)=v (a^{b_0})^{t^{\alpha_s}} (a^{b_1})^{t^{\alpha_s-1}}\dots  (a^{b_{\kappa}})^{t^{\alpha_s-\kappa}}.$$

Since $$\overline {a^{\beta}}=\overline{(a^{b_0})(a^{b_1})^{t^{-1}}\dots(a^{b_{\kappa})^{t^{-\kappa}} }}$$
it follows that $\overline u=\overline {h_S(u)}$. The following algorithm
shows that $h_S(u)$ can be computed in logspace. On input $u=v (a^{\beta})^{t^{\alpha_s}}$:
\begin{enumerate}
\item output $v$
\item store $b=\beta$ and $c=\alpha_s$  in binary
\item while  $b>p$: \begin{itemize}\item compute $q,r$ so that $0\leq r<p$ and $b=pq+r$
\item if $r>0$, output $(a^r)^{t^c}$
\item set $c=c-1$ and  $b=q$
\end{itemize}
\item output $(a^{b})^{t^c}$.
\end{enumerate}

Let $\iota:S\rightarrow S$ be the logspace function that computes the inverse of a word given in the proof of Lemma \ref{lem:someExt}.  Define  $\tau:t\mapsto t,t^{-1}\mapsto t^{-1}, a\mapsto a^{-1}$, which is computable with no memory.

We can compute the normal form as follows.
Let $w\in X^*$ be a word written on an input tape.  Run the algorithm in Lemma \ref{lem:step1} to compute $\mathrm{texp}$ and store it in binary.
Run the approximation algorithm on $u=wt^{-\mathrm{texp}}$, suppressing output.
\begin{enumerate}
\item if $\beta_u=0$, output $t^{\mathrm{texp}}$.

\item if  $\beta_u>0$, output $h_S(f_S(u))$, then output  $t^{\mathrm{texp}}$.

\item if  $\beta_u<0$, output $\tau(h_S(f_S(\iota(u)))$, then output  $t^{\mathrm{texp}}$.
\end{enumerate}

\end{proof}

Note that by Proposition \ref{prop:short}, the length of a logspace normal form for an input word of length $n$ is at most a polynomial in $n$. In  this case,  for $p=2$, the input word
$t^{k+1}at^{-k-1}a^{-1}$ of length $2k+4$, has normal form
$\displaystyle aa^ta^{t^2}a^{t^3}a^{t^4}\dots a^{t^k}$ of
length $1+k+\sum_{i=1}^k 2i= k+k(k+1)=k^2+2k+1$.

\section{Logspace embeddable groups}\label{sec:logspaceembeddable}

We define a group to be {\em logspace embeddable} if it embeds in a
group which has a logspace normal form.

Our results from the previous sections give us:
\begin{cor}\label{cor:closurelogspaceembed}
Being logspace embeddable is closed under  direct product, wreath product, and passing to finite index subgroups and supergroups.

\end{cor}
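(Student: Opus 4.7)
The plan is to reduce each of the four closure properties for logspace embeddable groups to the corresponding closure property already established for groups with logspace normal form. Direct product and finite-index subgroup are immediate: if $G_i \hookrightarrow H_i$ with $H_i$ having logspace normal form, then $G_1 \times G_2 \hookrightarrow H_1 \times H_2$ and the right-hand side has logspace normal form by Proposition \ref{prop:directprod}; if $G$ has finite index in some logspace embeddable $K \hookrightarrow H$, then $G \hookrightarrow H$ by composition.

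For wreath products, given embeddings $\iota_1: G \hookrightarrow H_G$ and $\iota_2: K \hookrightarrow H_K$ with both codomains having logspace normal form, I would first set up the natural functorial embedding $G \wr K \hookrightarrow H_G \wr H_K$. An element $((g_k)_{k \in K}, k_0)$ of $G \wr K$ maps to the element of $H_G \wr H_K$ whose base function sends $\iota_2(k) \mapsto \iota_1(g_k)$ for $k \in K$ and is the identity off $\iota_2(K)$, and whose top coordinate is $\iota_2(k_0)$. One checks this respects the semidirect product structure because the shift action of $K$ on the base agrees with the restriction of the $H_K$-action to $\iota_2(K)$, and injectivity follows from injectivity of $\iota_1, \iota_2$. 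Then Theorem \ref{closureWreath} gives that $H_G \wr H_K$ has logspace normal form, so $G \wr K$ is logspace embeddable.

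The main obstacle is closure under finite-index supergroups, because enlarging from $G$ to a supergroup $K$ cannot be handled by simply composing an existing embedding. My plan is to invoke the Kaloujnine--Krasner embedding theorem: if $G$ has finite index $n$ in $K$, then $K$ embeds in $G \wr S_n$ via its left-multiplication action on a fixed set of coset representatives (for $k \in K$ and coset representatives $g_1, \ldots, g_n$, write $k g_i = g_{\sigma(k)(i)} h_i(k)$ with $h_i(k) \in G$; then $k \mapsto ((h_1(k), \ldots, h_n(k)), \sigma(k))$ is an injective homomorphism into $G \wr S_n$). Given a logspace embedding $G \hookrightarrow H$, the wreath product step of the previous paragraph yields $G \wr S_n \hookrightarrow H \wr S_n$, so $K \hookrightarrow H \wr S_n$. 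Since $S_n$ is finite (and so has a trivially logspace normal form, using a finite lookup in the finite state control) and $H$ has logspace normal form, Theorem \ref{closureWreath} gives that $H \wr S_n$ has logspace normal form, completing the proof.
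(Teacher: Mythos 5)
Your reductions for direct products, for (finite-index, indeed arbitrary) subgroups, and for wreath products are exactly what the paper intends: the paper gives no argument beyond citing Propositions \ref{prop:directprod}, \ref{prop:finiteExtQuot} and Theorem \ref{closureWreath}, and your functorial embedding $G\wr K\hookrightarrow H_G\wr H_K$ (base functions supported on $\iota_2(K)$, compatibility of the shift actions) is the right way to make the wreath-product case precise. The supergroup case is where the real content lies, and invoking a Kaloujnine--Krasner-type embedding is a sensible way to supply what the paper leaves implicit.

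However, that step has a genuine (though repairable) gap. The embedding you describe, $k\mapsto((h_1(k),\dots,h_n(k)),\sigma(k))$, lands in the \emph{permutational} wreath product $G^n\rtimes S_n$, with base indexed by the $n$ cosets, whereas $G\wr S_n$ in this paper --- and in Theorem \ref{closureWreath}, whose normal form is built from $\bigoplus_{h\in H}G_h$ --- means the restricted \emph{regular} wreath product, whose base here would be $G^{n!}$. So Theorem \ref{closureWreath}, as stated, does not apply to the group into which you have embedded $K$. The gap can be closed in either of two ways. (i) For a finite top group the permutational wreath product embeds in the regular one: pull a tuple $f:\{1,\dots,n\}\to H$ back to $F:S_n\to H$ by $F(\sigma)=f(\sigma(1))$; this is an injective, $S_n$-equivariant homomorphism of base groups (injectivity uses transitivity of the action), hence extends to an embedding $H^n\rtimes S_n\hookrightarrow H\wr S_n$, and your chain $K\hookrightarrow G^n\rtimes S_n\hookrightarrow H^n\rtimes S_n\hookrightarrow H\wr S_n$ finishes the proof. (ii) Alternatively, avoid $S_n$ entirely: the core $C=\bigcap_{k\in K}kGk^{-1}$ is normal of finite index in $K$ and embeds in $H$, the classical Kaloujnine--Krasner theorem gives $K\hookrightarrow C\wr(K/C)$ as a regular wreath product, and your functorial embedding plus Theorem \ref{closureWreath} (the finite group $K/C$ trivially has a logspace normal form) yields $K\hookrightarrow H\wr(K/C)$. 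With either patch your argument is complete.
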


Magnus proved in \cite{Magnus} that a free solvable group can be embedded
in an iterated wreath product of $\Z$. 
(For a modern exposition of this result, see, for example, \cite{MR2645045}.)
Thus we obtain the following corollary of Theorem
\ref{closureWreath}.

\begin{cor}
All finitely generated free solvable groups are logspace embeddable.
In particular, all finitely generated free metabelian groups are logspace embeddable.
\end{cor}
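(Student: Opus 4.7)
The plan is to combine Magnus's embedding theorem (already cited in the excerpt) with the closure under wreath product (Theorem \ref{closureWreath}) and the logspace normal form for finitely generated abelian groups (Corollary \ref{cor:freeabel}). The statement is labelled a corollary precisely because the heavy lifting has already been done elsewhere; the job here is just to assemble these ingredients into a short, clean argument.

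First I would invoke Magnus's theorem: the free solvable group $S_{n,d}$ of rank $n$ and derived length $d$ embeds into a finitely generated iterated wreath product of the form $A_d \wr A_{d-1} \wr \cdots \wr A_1$, where each $A_i$ is a finitely generated free abelian group (e.g., a copy of $\Z^n$). Since the excerpt cites both \cite{Magnus} and \cite{MR2645045} for this, I would simply quote the embedding as a black box rather than reproduce its construction.

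Next I would establish that the target group $W_d := A_d \wr \cdots \wr A_1$ has a logspace normal form, by induction on $d$. The base case $d=1$ is Corollary \ref{cor:freeabel}. For the inductive step, assume $W_{d-1} = A_{d-1} \wr \cdots \wr A_1$ has a logspace normal form; since $A_d$ also has a logspace normal form by Corollary \ref{cor:freeabel}, Theorem \ref{closureWreath} applied to $A_d \wr W_{d-1}$ yields a logspace normal form for $W_d$. (One should take a small amount of care that Theorem \ref{closureWreath} is stated for the restricted wreath product with a single bottom group, which matches the convention needed to iterate Magnus's embedding.)

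Finally, having embedded $S_{n,d}$ into $W_d$, and having shown $W_d$ has a logspace normal form, the definition of logspace embeddable (Section \ref{sec:logspaceembeddable}) gives the conclusion. The metabelian case is the special case $d=2$. There is no real obstacle here; if any step requires attention, it is simply confirming that the particular iterated wreath product produced by Magnus's construction matches the form to which Theorem \ref{closureWreath} iterates cleanly, which is a standard check rather than a difficulty.
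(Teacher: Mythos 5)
Your proposal is correct and follows essentially the same route as the paper: the paper's ``proof'' is just the remark that Magnus's theorem embeds a free solvable group into an iterated wreath product of finitely generated abelian groups, which has a logspace normal form by Theorem \ref{closureWreath} (together with Corollary \ref{cor:freeabel}). You merely spell out the routine induction on derived length that the paper leaves implicit, so there is no substantive difference.
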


\begin{cor}
If $G$ is logspace embeddable, then the word and co-word problems for $G$ are decidable in logspace (and polynomial time).
\end{cor}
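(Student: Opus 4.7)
The plan is to reduce the word problem for $G$ to a same-element test in the ambient group $H$, which we can solve in logspace by Lemma \ref{lem:inv}(1). Since $G$ is logspace embeddable, there is an injective homomorphism $\phi\colon G \hookrightarrow H$, where $H = \langle Y \rangle$ is equipped with a logspace normal form function $f_Y$. I would fix a finite symmetric generating set $X$ for $G$ and, for each $x \in X$, fix a word $\phi(x) \in Y^*$ representing the image of $x$ under $\phi$. The table $\{x \mapsto \phi(x) : x \in X\}$ is finite, so it can be hard-coded into the finite state control of a transducer.

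Next I would define a substitution function $\Phi\colon X^* \to Y^*$ by $\Phi(x_1 x_2 \cdots x_n) = \phi(x_1) \phi(x_2) \cdots \phi(x_n)$. To compute $\Phi(w)$, a transducer simply scans the input from left to right and, for each input letter $x$, writes the corresponding constant-length word $\phi(x)$ to the output tape. This needs no work tape beyond the input-position counter and so is certainly logspace computable. Because $\phi$ is injective, for every $w \in X^*$ we have $\overline{w} = 1_G$ in $G$ if and only if $\overline{\Phi(w)} = 1_H$ in $H$.

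To decide the word problem for $G$ on input $w$, I would apply Lemma \ref{lem:inv}(1) to test whether $\Phi(w)$ and the empty word $\lambda$ represent the same element of $H$. By Lemma \ref{lem:comp}, the letters of $\Phi(w)$ can be supplied to this equality-testing transducer on demand, so that the intermediate string $\Phi(w)$ (which may have length linear in $|w|$) is never written to a work tape. The co-word problem is the set-theoretic complement, and is therefore also logspace decidable by swapping the accept and reject states of the same deterministic transducer. The polynomial-time bound is immediate from Lemma \ref{lem:polyTime}.

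There is essentially no obstacle here: all the technical work is already packaged into Lemmas \ref{lem:comp} and \ref{lem:inv}. The only mild point of care is to avoid materialising the intermediate word $\Phi(w)$ on the work tape, which is precisely what the on-demand composition of logspace transducers in Lemma \ref{lem:comp} is designed to handle.
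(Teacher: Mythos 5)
Your proof is correct and follows essentially the same route as the paper: the paper simply invokes the equality test of Lemma \ref{lem:inv} in the ambient group together with the fact that the word and co-word problems pass to subgroups, and your substitution map $\Phi$ is exactly the standard way that subgroup-passage is made explicit. The extra care you take with on-demand composition (Lemma \ref{lem:comp}) and complementation of a deterministic machine is fine but not a departure from the paper's argument.
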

\begin{proof}
By Lemma \ref{lem:inv} we can decide if two words in a group with logspace normal form are equal or not, and moreover by Lemma \ref{lem:polyTime} the algorithm runs is polynomial time. Since the word and coword problems pass to subgroups the result follows.
\end{proof}

While logspace embeddable groups have efficiently decidable word problem,
the same cannot be said for their conjugacy or generalised word problems.
\begin{prop} The generalized word problem and
the conjugacy problem are not decidable for logspace embeddable
groups.
\end{prop}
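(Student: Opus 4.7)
The plan is to exhibit, for each of the two decision problems, a specific logspace embeddable group in which that problem is undecidable.

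For the generalized word problem, I would take $G = F_2 \times F_2$. Proposition~\ref{prop:freegp} gives a logspace normal form for $F_2$, and closure under direct product (Proposition~\ref{prop:directprod}) then yields a logspace normal form for $G$; in particular $G$ is logspace embeddable. Mihailova's classical construction \cite{\Mihailova} exhibits a finitely generated subgroup $H \le G$ whose membership problem in $G$ is recursively undecidable. Since this is exactly an undecidable instance of the generalized word problem for $G$, the first assertion follows immediately.

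For the conjugacy problem, I would appeal to a construction of Miller \cite{\Miller} producing a finitely presented group $K$ with decidable word problem but recursively undecidable conjugacy problem, and use a version realised as a subgroup of a direct product of finite-rank free groups. Such a direct product has a logspace normal form by Propositions~\ref{prop:freegp} and~\ref{prop:directprod}, so $K$ itself is logspace embeddable, and supplies the desired witness. Note that there is no need for the conjugacy problem to be inherited from the ambient group: the witness is $K$ itself, whose conjugacy problem is undecidable by Miller's theorem, and which is logspace embeddable by definition.

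The main obstacle is matching the classical constructions to our setting, i.e.\ ensuring that a group with undecidable conjugacy problem really does embed into one of our groups with logspace normal form, or more generally into a group built by the closure operations recorded in Corollary~\ref{cor:closurelogspaceembed}. For Mihailova's subgroup the embedding is immediate. For the conjugacy problem one has to cite the appropriate version of Miller's construction in which $K$ sits inside a direct product of free groups; once that is in hand, no further work is required beyond invoking our closure results.
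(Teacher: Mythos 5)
Your proposal is correct and takes essentially the same route as the paper, whose one-line proof cites the Mihailova/Miller results that finitely generated subgroups of the direct product of two finitely generated free groups can have unsolvable membership and unsolvable conjugacy problem, combined (implicitly) with the fact that such a direct product has a logspace normal form by the free-group and direct-product results. One small caveat: the witness for the conjugacy problem is a finitely generated (not necessarily finitely presented) subgroup of $F_2 \times F_2$, but finite presentability plays no role, so this does not affect the argument.
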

\begin{proof}
By \cite{\Mihailova} 
 finitely generated subgroups of the direct
product of two finitely generated free groups can have unsolvable
membership problem and unsolvable conjugacy problem.
\end{proof}

We are not able to say whether the class of logspace embeddable groups
is {\em strictly} larger than the class of groups having logspace
normal forms.  If we were able to prove that logspace normal form
implies solvable conjugacy problem, for example, then the proposition above
would settle this.

Further, we know that all linear groups have logspace word problem by
\cite{\LiptonZ}, but we are not able to prove that they all have
logspace normal forms.


\section{Nilpotent groups}\label{sec:nilpotent}

In this section we prove that the group of unitriangular $r \times r$ matrices over $\Z$
has logspace normal form, and obtain as a corollary that all finitely
generated nilpotent groups are logspace embeddable.  It is important
to remember that for our purposes, $r$ is a constant, and in this
respect our algorithms are not uniform.

Let $r \geq 2$, and let $\mbox{UT}_r{\Z}$ be the group of upper
triangular matrices over $\Z$ with $1$'s on the diagonal.  For $1 \leq
i < j \leq r$, let $E_{i,j}$ denote the elementary matrix obtained
from the identity matrix by putting a $1$ in position $(i, j)$ and let
$X$ be the set of all such elementary matrices and their inverses.  $\mbox{UT}_r{\Z}$ is
generated as a group by $X$, and we denote by $\overline{w}$ the image
of $w$ under the natural homomorphism from the free group on $X$ to
$\mbox{UT}_r\Z$.

\begin{lem}
\label{smallEntries}
If $w$ is a word of length $n$ over $X$, and if $a$ is an entry in the
$i$th super-diagonal of the matrix $\overline{w}$, then $|a| \leq
n^i$.
\end{lem}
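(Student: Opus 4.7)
I plan to prove this by induction on $i$, analyzing how a single right-multiplication by an elementary generator $E_{a,b}^{\pm 1}$ alters the entries of a matrix. The key computation is that if $M' = M \cdot E_{a,b}^{\pm 1}$, then $M'$ agrees with $M$ outside column $b$, while $M'_{k,b} = M_{k,b} \pm M_{k,a}$. So a single generator modifies just one column, and the correction to the $(k,b)$ entry is $\pm$ an entry $(k,a)$ with $a < b$, which lies in a strictly lower super-diagonal (or on/below the diagonal, in which case it is $1$ or $0$).

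The base case $i = 1$ is essentially immediate from this: for the $(k,k+1)$ entry to change we need $b = k+1$, in which case the correction is $\pm M_{k,a}$ with $a \leq k$, so the correction is either $\pm 1$ (when $a = k$) or $0$ (when $a < k$). Starting from the identity, where the $(k,k+1)$ entry is $0$, after at most $n$ multiplications the entry has absolute value at most $n = n^1$.

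For the inductive step, write $w = x_1 \cdots x_n$ with $x_t = E_{a_t, b_t}^{\epsilon_t}$ and set $M_t = \overline{x_1 \cdots x_t}$, so $M_0 = I$ and $M_n = \overline{w}$. Since the $(k,k+i)$ entry of $I$ is $0$ for $i \geq 1$, telescoping the column-$b$-update formula yields
\[
(\overline{w})_{k,\, k+i} \;=\; \sum_{t \,:\, b_t = k+i} \epsilon_t \,(M_{t-1})_{k,\, a_t}.
\]
For each $t$ appearing in the sum we have $a_t < b_t = k+i$, so the entry $(M_{t-1})_{k, a_t}$ lies on or below the diagonal (when $a_t \leq k$) or in super-diagonal $a_t - k \leq i - 1$ (when $a_t > k$). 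In the first case the entry is $0$ or $1$; in the second, the inductive hypothesis applied to the word $x_1 \cdots x_{t-1}$ of length $t-1 \leq n$ bounds it by $n^{i-1}$. There are at most $n$ summands, giving $|(\overline{w})_{k,k+i}| \leq n \cdot n^{i-1} = n^i$.

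The argument is really just careful bookkeeping; the only mild subtlety is arranging the induction to be on $i$ while the intermediate products $M_{t-1}$ are allowed to be almost as long as $w$ itself, and recognising that the telescoping confines attention to strictly lower super-diagonals so that the factor of $n^{i-1}$ (rather than anything depending on the entries of $M_{t-1}$ in super-diagonal $i$) is legitimate.
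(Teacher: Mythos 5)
Your proof is correct, but it takes a genuinely different route from the paper. The paper argues by induction on the matrix size $r$, viewing $\mbox{UT}_r\Z$ as the split extension of $\mbox{UT}_{r-1}\Z$ by $\Z^{r-1}$, writing $w = w'x$ in block form and running an inner induction on the word length $n$, with a case analysis over generator types and over the position of the entry in the last column (the cases $k<r-j$, $k=r-j$, $k>r-j$, using the estimate $(n-1)^k+(n-1)^{k-1}=n(n-1)^{k-1}<n^k$). You instead induct on the super-diagonal index $i$, exploiting the single-column update rule $(ME_{a,b}^{\pm1})_{k,b}=M_{k,b}\pm M_{k,a}$ and telescoping over the word so that the $(k,k+i)$ entry of $\overline{w}$ is a sum of at most $n$ terms, each a diagonal/subdiagonal entry ($0$ or $1$, using unitriangularity of the partial products) or an entry in a strictly lower super-diagonal of a prefix, bounded by $n^{i-1}$ via the inductive hypothesis. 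Your argument is more elementary and uniform in $r$ (no block decomposition and no nested induction on $n$), and it makes transparent why the exponent increases by exactly one per super-diagonal; the paper's version has the advantage of fitting the recursive $\mbox{UT}_{r-1}\Z\ltimes\Z^{r-1}$ structure it uses elsewhere. One purely presentational remark: since $a_t-k$ can be any index between $1$ and $i-1$, your induction on $i$ should formally be a strong (complete) induction, with the bound $(t-1)^{a_t-k}\leq n^{a_t-k}\leq n^{i-1}$ made explicit; this is cosmetic and does not affect correctness.
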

\begin{proof}
We proceed by induction on $r$.
When $r=2$, $$E_{1,2}=\left[ \begin{array}{cc} 1& 1 \\ 0 & 1 \end{array} \right]$$ and
$\mbox{UT}_2{\Z}\cong \Z$. It is clear that the entry in the off-diagonal  of the matrix $\overline{w}$ has absolute value at most $n$ in this case, so the result holds.

Now let $r\geq 3$ and assume the result holds for  $r-1$.

Consider the homomorphism from
$\mbox{UT}_r \Z$ to $\mbox{UT}_{r-1} \Z$ that takes an $r \times r$
matrix to the $(r-1) \times (r-1)$ matrix in the upper left-hand
corner. The kernel of this homomorphism is isomorphic to $\Z^{r-1}$,
and $\mbox{UT}_r \Z$ is the split extension of $\mbox{UT}_{r-1} \Z$
and this kernel, so with a slight abuse of notation we can consider
$\mbox{UT}_{r-1} \Z$ as a subgroup of $\mbox{UT}_r \Z$.
Furthermore, our chosen generating set is the
disjoint union of generators of the form $E_{i,j}^{\pm 1}$ for $j<r$, and
$E_{i,r}^{\pm 1}$, which generate $\mbox{UT}_{r-1} \Z$ and $\Z^{r-1}$ respectively.  We will denote
by $X_r$ the generating set for $\mbox{UT}_r \Z$.

Let $w$ be a word in $(X_r)^*$ of length $n$.
We now proceed  by induction on $n$.
If $n = 1$, the result is clear, since every entry $a$ in the matrix
$\overline{w}$ satisfies $|a| \leq 1$.  When $n \geq2$, we may assume
that the lemma holds for $n-1$.

  Let $w' $ be a word of length $n-1$
over $X_r$ and let $x$ be an element of $X_r$ such that $w = w' x$.
Then $\overline{w'}$ is of the form
$$
\left[ \begin{array}{cc} A & u \\ 0 & 1 \end{array} \right],
$$
where $A$ is an element of $\mbox{UT}_{r-1} \Z$ and $u$ is a column vector in
 $\Z^{r-1}$.

There are  two cases for the generator $x$: either   $x=E_{i,j}^{\pm 1}$ with $i<j<r$,
that is, $$\left[ \begin{array}{cc} B & 0 \\ 0 & 1 \end{array} \right],$$ where $B$ is of the form $E_{i,j}^{\pm 1}\in X_{r-1}$;
 or $x=E_{i,r}^{\pm 1}$, that is, $$\left[ \begin{array}{cc} I & v \\ 0 & 1 \end{array} \right],$$
where $I$ is the identity matrix and $v$ is a column vector of length $r-1$ with one entry $\pm 1$ and the rest 0.

In the first case, $$\overline{w'}\overline x=\left[ \begin{array}{cc} A & u \\ 0 & 1 \end{array} \right]\left[ \begin{array}{cc} B & 0 \\ 0 & 1 \end{array} \right]=\left[ \begin{array}{cc} AB & u \\ 0 & 1 \end{array} \right].$$
The matrix $AB$ is the product of $n$ generators over $X_{r-1}$, so by inductive assumption on $r$  the entries in the $i$th super-diagonals have absolute value at most $n^i$, and the entries in the last column $$ \left[\begin{array}{c}
u_{r-1} \\ u_{r-2} \\ \vdots \\  u_1  \\1\end{array}\right]$$ satisfy $u_i\le (n-1)^i<n^i$ since they come from $\overline {w'}$.

In the second case,  $$\overline{w'}\overline x=\left[ \begin{array}{cc} A & u \\ 0 & 1 \end{array} \right]\left[ \begin{array}{cc} I & v \\ 0 & 1 \end{array} \right]=\left[ \begin{array}{cc} A & Av+u \\ 0 & 1 \end{array} \right].$$
The entries in the upper left-hand corner satisfy the lemma since they come from $w'$.
Let $a$ be an element of the $k$th superdiagonal of $\overline{w}$, and suppose as well that $a$
is in the last column of $\overline{w}$.
What remains is to show that $|a| < n^k$ in this special case.

The column vector $Av$ is either one of the columns of $A$, or a column of $A$ multiplied by $-1$.
Suppose that $Av$ is the $j$'th column of $A$, or its negation, and denote the entries of $Av$ as follows:
$$ \left[\begin{array}{c}
a_{j-1} \\ a_{j-2} \\ \vdots \\ a_{1} \\ 1 \\0 \\ \vdots \\ 0 \end{array}\right].$$
Denote the entries of $u$ as follows:
$$\left[\begin{array}{c}
u_{r-1} \\ u_{r-2} \\ \vdots \\ u_{1}\end{array}\right].$$
Note that $a_i$ and $u_i$ are on the $i$'th super diagonal of $\overline{w'}$, so by our inductive
assumption on $n$, $|a_i|, |u_i| \leq (n-1)^i$.
Note also that $j \leq r-1$.

There are three cases to consider.
For the first case, let us suppose that $k < r-j$.
In this case $a = u_k + 0 = u_k$ and hence $|a| \leq (n-1)^k < n^k$.
For the second case, let us suppose that $k = r-j$.
In this case, $a = u_k + 1$, and hence $|a| \leq (n-1)^k + 1 \leq n^k$.
In the remaining case, $k > r-j$.
In this case $a = u_k + a_m$, where $m = k - (r-j)$.
Therefore $|a| \leq (n-1)^k + (n-1)^{k-(r-j)}$.
Since $j \leq r-1$, $r-j \geq 1$ and
$$|a| \leq (n-1)^k + (n-1)^{k-1} = (n-1)^{k-1}(n-1+1) = n (n-1)^{k-1} < n^k.$$
\end{proof}

By listing those elementary matrices with a $1$ on the first
superdiagonal first, followed by those elementary matrices with a $1$
on the second superdiagonal next, and so on, we obtain the sequence
\begin{displaymath}
E_{1,2}, E_{2,3}, \dots, E_{r-1,r}, \\ E_{1,3}, E_{2,4}, \dots,
E_{r-2,r}, \\ \dots, \\ E_{1,r},
\end{displaymath}
which is a polycyclic generating sequence for $G$ and hence gives us a
normal form $g_X$ for $G$.

\begin{thm}
The normal form $g_X(w)$ can be computed in logspace.
\end{thm}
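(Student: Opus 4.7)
The plan is to compute the matrix $P = \overline{w}$ explicitly in logspace, then invert the polycyclic collection process super-diagonal by super-diagonal to read off the exponents $a_{i,j}$ of the normal form. The crucial point is that $r$ is a fixed constant, so an $r \times r$ matrix whose entries have magnitude polynomial in $n$ occupies only $O(r^2 \log n) = O(\log n)$ bits of work-tape.

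For the first step, initialise the strictly upper-triangular entries of $P$ to $0$ (leaving the diagonal $1$'s implicit) and scan $w$ left to right: upon reading $x_t = E_{p,q}^{\pm 1}$, update $P[i,q] \gets P[i,q] \pm P[i,p]$ for each $i < q$. By Lemma \ref{smallEntries}, every intermediate entry of $P$ has absolute value at most $n^{r-1}$, so each update is arithmetic on $O(\log n)$-bit integers and the space bound is preserved.

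The second step extracts the exponents in the order demanded by $g_X$. Maintain a second matrix $N$, initialised to the identity, which will represent the partial product of the normal-form generators already emitted. For $k = 1, 2, \ldots, r-1$, and for each $i = 1, \ldots, r-k$ in turn, set $a_{i,i+k} := P[i,i+k] - N[i,i+k]$, write $|a_{i,i+k}|$ copies of $E_{i,i+k}^{\mathrm{sign}(a_{i,i+k})}$ to the output tape, and update $N \gets N \cdot E_{i,i+k}^{a_{i,i+k}}$. Since $E_{i,i+k}^{a}$ equals the identity plus $a\,\mathbf{e}_{i,i+k}$, the right-multiplication alters only column $i+k$ of $N$, sending $N[r,i+k]$ to $N[r,i+k] + a N[r,i]$. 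For $r > i$ we have $N[r,i] = 0$, so previously matched super-diagonals are untouched; for $r = i$ we add exactly $a_{i,i+k}$ to the $(i,i+k)$ entry, making $N[i,i+k]$ equal $P[i,i+k]$ by construction; for $r < i$ the change affects only strictly higher super-diagonals, which will be corrected in later steps. An induction on $k$ then yields $N = P$ at termination, so the emitted word is equal to $w$ in $G$ and is in the desired polycyclic normal form.

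The main thing still to verify is that the entries of $N$ remain polynomially bounded throughout, so that both $P$ and $N$ continue to fit in $O(\log n)$ space. This follows from exactly the reasoning of Lemma \ref{smallEntries} applied to the normal-form word emitted so far, whose total length is at most $\sum_{i<j} |a_{i,j}| = O(n^{r-1})$ (bounding the output length as well, consistent with Proposition \ref{prop:short}). A binary counter writes the correct number of generator symbols for each $a_{i,j}$. The only potentially subtle point is the column-update claim, which is an immediate calculation from $E_{i,j}^a = I + a\,\mathbf{e}_{i,j}$; once that is in hand, the fact that $r$ is a fixed constant keeps all bookkeeping comfortably within logspace.
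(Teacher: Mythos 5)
Your extraction procedure is essentially the paper's: compute $\overline{w}$ and store it in $O(\log n)$ bits via Lemma \ref{smallEntries}, then peel off the super-diagonals in the polycyclic order. Your bookkeeping is a mild variant — you keep the original matrix $P$ together with the partial product $N$ of the generators already emitted and take entry differences, where the paper multiplies by the inverses of the emitted generators and reads the next super-diagonal of the resulting matrix; since $N$ and $P$ agree on all completed super-diagonals, the two recipes produce the same exponents, and your column-update argument for why previously matched entries are undisturbed is correct (though you reuse the letter $r$ both for the matrix size and as a row index, which should be fixed).

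The gap is in the space bound, which is where the paper's proof spends all of its effort. You identify it as "the main thing still to verify" and then dispose of it by asserting that the entries of $N$ stay polynomially bounded "by exactly the reasoning of Lemma \ref{smallEntries} applied to the normal-form word emitted so far, whose total length is at most $\sum_{i<j}|a_{i,j}| = O(n^{r-1})$". This is circular as written: Lemma \ref{smallEntries} bounds the entries of $N$ in terms of the length of the emitted word, but that length is $\sum_{i<j}|a_{i,j}|$, and each $a_{i,j}$ is computed as $P[i,j]-N[i,j]$, so bounding it requires the very bound on the entries of $N$ that you are trying to establish. Moreover the stated bound $O(n^{r-1})$ is not delivered by this reasoning: the obvious bootstrap gives, e.g., exponents on the third super-diagonal bounded only by roughly the cube of the output length accumulated so far, so the degrees compound from one super-diagonal to the next (a sharper, weight-homogeneity argument would be needed for $O(n^{r-1})$, and you give none). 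What is needed — and what the paper supplies — is an explicit induction over super-diagonals: bound the exponents on super-diagonal $k+1$ by the entries of $P$ and of the current $N$, then bound the entries of the updated $N$ via Lemma \ref{smallEntries} in terms of the output length so far, obtaining bounds of the form $D_k n^{e_k}$ with $D_k$ and $e_k$ depending only on $r$. Since $r$ is a constant, any such polynomial bound suffices for logspace, so your argument is repairable, but the inductive size estimate must actually be carried out rather than asserted.
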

\begin{proof}
We begin by describing our algorithm for computing $g_X(w)$.  Compute
and store the matrix $\overline{w}$.  If the entries on the first
super-diagonal are $\alpha_1, \alpha_2, \dots, \alpha_{r-1}$, then
$g_X(w)$ starts with the word $$E_{1,2}^{\alpha_1} E_{2,3}^{\alpha_2}
\dots E_{r-1,r}^{\alpha_{r-1}},$$ so we write this word to the output
tape.  Next compute the matrix for $$w_1 = E_{r-1,r}^{-\alpha_{r-1}}
\dots E_{2,3}^{-\alpha_2} E_{1,2}^{-\alpha_1} w.$$ The matrix
$\overline{w_1}$ will have $0$'s along the first super-diagonal.  Let
$\beta_1$, $\beta_2$, $\dots, \beta_{r-2}$ be the entries on the second
super-diagonal of $\overline{w_1}$.  The next part of $g_x(w)$ starts
with the word
$$E_{1,3}^{\beta_1} E_{2,4}^{\beta_2} \dots
E_{r-2,r}^{\beta_{r-2}},$$ so we write this word to the output tape.
Next compute the matrix for $$w_2 = E_{r-2,r}^{-\beta_{r-2}} \dots
E_{2,4}^{-\beta_2} E_{1,3}^{-\beta_1}w_1.$$ The matrix
$\overline{w_2}$ has $0$'s along the first two super-diagonals.
Continue in this way, peeling off the super-diagonals one at a time,
obtaining at each stage a word $w_i$ such that $\overline{w_i}$ has
$0$'s along the first $i$ super-diagonals, and writing the part of the
normal form $g_X(w)$ that corresponds to the $i$th super-diagonal as
you go.

To show that $g_X(w)$ can be calculated in logspace, it suffices to
show that there exist constants $D$ and $k$ such that for all $1 \leq
i \leq r-1$, the length of $w_i$ is bounded by $Dn^k$, since then by
Lemma \ref{smallEntries}, there exists a constant $C$ such that the
matrix $\overline{w_i}$ can be stored in space $r^2 \log{(C
  (Dn^k)^{r-1}})$, which is $O(\log{n})$.  We will define $D_i$ and
$k_i$ inductively in such a way that for all $i$, the length of $w_i$
is bounded by $D_i n^{k_i}$ and $D_i$ and $k_i$ are constants in the
sense that they do not depend on $n$.  When $i = 0$, $w_i = w$ and we
can take $D_0 = 1$ and $k_0 = 1$.  Now suppose that $D_i$ and $k_i$
are suitable constants for $w_i$.  Let $\beta_1, \beta_2, \dots,
\beta_p$ be the entries on the $(i+1)$-st super-diagonal of
$\overline{w_i}$. By Lemma \ref{smallEntries}, each $\beta_j$ is
bounded in magnitude by $C (D_i n^{k_i})^{r-1}$.  Therefore, the
length of $w_{i+1}$ is bounded by $ p C (D_i n^{k_i})^{r-1} + D_i
n^{k_i} $, which is itself bounded by $ r D_i^r (C + 1)n^{k_i r}$.  We
let $D_{i+1} = r D_i^r (C + 1)$ and $k_{i+1} = k_i r$.  Notice that
neither $D_i$ nor $k_i$ depends on $n$, so from our point of view they
are constants.  Thus, $D$ = $D_{r-1}$ and $k = k_{r-1}$ are constants
such that for all $1 \leq i \leq r-1$, the length of $w_i$ is bounded
b $Dn^k$.
\end{proof}

\begin{cor}
All finitely generated nilpotent groups are logspace embeddable.
\end{cor}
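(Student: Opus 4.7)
The plan is to combine the theorem just proved---that $\mathrm{UT}_r(\Z)$ has a logspace normal form for every fixed $r$---with two classical facts about finitely generated nilpotent groups: (i) every finitely generated torsion-free nilpotent group embeds in $\mathrm{UT}_r(\Z)$ for some $r$ (this is the theorem of Mal'cev, which can for instance be read off from the exponential coordinates of the Mal'cev completion, or from Jennings' embedding), and (ii) every finitely generated nilpotent group contains a torsion-free subgroup of finite index (the torsion elements form a finite characteristic subgroup $T$, and a finite-index torsion-free subgroup can be extracted by intersecting kernels of suitable homomorphisms, or by appealing to the polycyclic structure).

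Given these two inputs, the argument is short. Let $G$ be a finitely generated nilpotent group. By (ii) there is a finite-index subgroup $H \le G$ which is torsion-free and nilpotent, and still finitely generated. By (i), $H$ embeds in $\mathrm{UT}_r(\Z)$ for some $r$, and the preceding theorem tells us that $\mathrm{UT}_r(\Z)$ has a logspace normal form. Therefore $H$ is logspace embeddable. Finally, Corollary \ref{cor:closurelogspaceembed} states that logspace embeddability is preserved on passing to finite-index supergroups, so $G$ itself is logspace embeddable.

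There is really no obstacle here; the substantive work was done in the theorem about $\mathrm{UT}_r(\Z)$, and the corollary amounts to invoking two standard structural results about finitely generated nilpotent groups and one previously established closure property. If one wished to avoid (ii) entirely, one could instead quote Mal'cev's theorem in its stronger form giving an embedding of any finitely generated nilpotent group into $\mathrm{GL}_r(\Z)$ that lands in a conjugate of $\mathrm{UT}_r(\Z)$ after passing to a finite-index subgroup; this would fold step (ii) into the linearity statement but yields exactly the same corollary.

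A final remark worth making in the write-up is that, since groups of polynomial growth are virtually nilpotent by Gromov's theorem \cite{\Gromov}, the corollary immediately implies that every finitely generated group of polynomial growth is logspace embeddable, recovering the claim advertised in the abstract.
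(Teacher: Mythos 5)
Your proof is correct and follows essentially the same route as the paper: pass to a finite-index torsion-free subgroup, embed it in $\mathrm{UT}_r\Z$ via Mal'cev/Jennings, invoke the logspace normal form for $\mathrm{UT}_r\Z$, and conclude by closure of logspace embeddability under finite-index supergroups. The only cosmetic difference is that the paper cites its finite-extension result (Proposition on finite index subgroups and supergroups) directly rather than the corollary on logspace embeddability, which amounts to the same thing.
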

\begin{proof}
Let $G$ be a finitely generated nilpotent group.  Then $G$ has a
finite index subgroup $N$ which is torsion-free (\cite{MR0017281}, Theorem 3.21).
 There exists a positive integer $r$ such that $N$ embeds in $\mbox{UT}_r \Z$
(\cite{\Segal}, Theorem 2, p. 88).  Therefore $N$ is logspace
embeddable.  Since by Proposition \ref{prop:finiteExtQuot} the class of logspace
embeddable groups is closed under finite extension,
   $G$ is logspace embeddable.
\end{proof}

\begin{cor}
All finitely generated groups of polynomial growth are logspace embeddable.
\end{cor}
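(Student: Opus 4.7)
The plan is to reduce this to the previous corollary (finitely generated nilpotent groups are logspace embeddable) by invoking Gromov's theorem on groups of polynomial growth \cite{\Gromov}, which states that a finitely generated group $G$ has polynomial growth if and only if $G$ is virtually nilpotent, i.e., contains a nilpotent subgroup $N$ of finite index.

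The steps I would carry out are as follows. First, given a finitely generated group $G$ of polynomial growth, apply Gromov's theorem to obtain a finite index nilpotent subgroup $N \leq G$. Since $N$ is finitely generated (as a finite index subgroup of a finitely generated group), the previous corollary implies that $N$ is logspace embeddable. Second, invoke Corollary \ref{cor:closurelogspaceembed}, which asserts that the class of logspace embeddable groups is closed under passing to finite index supergroups, to conclude that $G$ itself is logspace embeddable.

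There is essentially no obstacle here beyond citing Gromov's theorem correctly; all of the work has been done in Section \ref{sec:nilpotent} (embedding the torsion-free finite index part of a nilpotent group into $\mbox{UT}_r\Z$) and in Section \ref{sec:logspaceembeddable} (closure of logspace embeddability under finite index supergroups). If one wanted a self-contained statement, one could note that Gromov's theorem is the only nontrivial input, and everything else is a two-line citation chain.
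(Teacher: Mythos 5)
Your proof is correct and follows exactly the paper's route: Gromov's theorem to get a finitely generated nilpotent subgroup of finite index, the preceding corollary for its logspace embeddability, and Corollary \ref{cor:closurelogspaceembed} (closure under finite index supergroups) to pass back to the whole group. No gaps.
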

\begin{proof}
By Gromov's theorem \cite{\Gromov} if a finitely generated group has polynomial growth then it has a  nilpotent finitely generated subgroup of finite index. The result follows from the previous corollary and Corollary \ref{cor:closurelogspaceembed}. \end{proof}

\bibliography{refs} \bibliographystyle{plain}

\end{document}